\newtheoremstyle{theorem}
  {15pt}          
  {15pt}  
  {\sl}  
  {\parindent}
  {\sc}  
  {. }   
  { }    
  {}     
\theoremstyle{theorem}
\newtheorem{lemma}{Lemma}[section]
\newtheorem{theorem}{Theorem}[section]
\newtheoremstyle{defi}
  {15pt}          
  {15pt}  
  {\rm}  
  {\parindent}     
  {\sc}  
  {. }    
  { }    
  {}     
\theoremstyle{defi}
\newtheorem{remark}{Remark}[section]
\newcommand{\R}{\mathbb{R}}
\newcommand{\FI}{\mathcal{I}}
\newcommand{\iprod}[1]{\langle#1\rangle}
\newcommand{\bigiprod}[1]{\bigl\langle#1\bigr\rangle}
\newcommand{\biggiprod}[1]{\biggl\langle#1\biggr\rangle}
\newcommand{\Mult}{\mathcal{M}}
\newcommand{\vecspan}{\operatorname{span}}
\newcommand{\Q}{\mathcal{Q}}
\newcommand{\CFD}[2]{{}^{\mathrm{C}}\partial^{#1}_{#2}}
\newcommand{\veczero}{\boldsymbol{0}}
\title[WELL-POSEDNESS \dots]{WELL-POSEDNESS 
OF TIME-FRACTIONAL ADVECTION-DIFFUSION-REACTION EQUATIONS \\[3pt]
IN ``FCAA'' JOURNAL} 
\author[\normalsize W. McLean, K. Mustapha, R. Ali, O. Knio]{William McLean$^1$,
Kassem Mustapha$^2$, Raed Ali$^2$, Omar Knio$^3$}
\begin{document}

\vbox to 2.5cm { \vfill }
\bigskip \medskip

\begin{abstract}
We establish the well-posedness of an initial-boundary value problem for a 
general class of linear time-fractional, advection-diffusion-reaction 
equations, allowing space- and time-dependent coefficients as well as initial 
data that may have low regularity. Our analysis relies on novel energy 
methods in combination with a fractional Gronwall inequality and properties of 
fractional integrals.

\medskip
{\it MSC 2010\/}: Primary
26A33; 
Secondary
35A01, 
35A02, 
35B45, 
35D30, 
35K57, 
35Q84, 
35R11. 

\smallskip
{\it Key Words and Phrases}:
Fractional PDE, weak solution, Volterra integral equation, 
fractional Gronwall inequality, Galerkin method.
\end{abstract}

\maketitle
\vspace*{-16pt}
\section{Introduction}
The main scope of this paper is to investigate the existence and uniquesness 
of the weak solution of a linear, time-fractional problem of the form
\begin{equation}\label{eq: FPDE}
\partial_tu-\nabla\cdot\bigl(\kappa\nabla\partial_t^{1-\alpha}u
	-\vec F\partial_t^{1-\alpha}u-\vec Gu\bigr)+a\partial_t^{1-\alpha}u+bu=g
\end{equation}
for $x\in\Omega$ and $0<t\le T$. The parameter~$\alpha$ in the fractional
derivative lies in the range~$0<\alpha<1$, and the spatial 
domain~$\Omega\subseteq\R^d$ ($d\ge1$) is bounded and Lipschitz.  The 
coefficients $\vec F$, $\vec G$, $a$~and $b$, as well as the source 
term~$g$, are assumed to be known functions of $x$~and $t$, whereas
the generalized diffusivity~$\kappa=\kappa(x)$ may depend only on~$x$
but is permitted to be a real, symmetric positive-definite matrix.  
We impose homogeneous Dirichlet boundary conditions,
\begin{equation}\label{eq: Dirichlet bc}
u(x,t)=0\quad\text{for $x\in\partial\Omega$ and $0\le t\le T$,}
\end{equation}
and the initial condition
\begin{equation}\label{eq: initial condition}
u(x,0)=u_0(x)\quad\text{for $x\in\Omega$.}
\end{equation}
The Riemann--Liouville fractional \emph{derivative} of
order~$1-\alpha$ is defined via the fractional \emph{integral} of 
order~$\alpha$: with $\omega_\alpha(t)=t^{\alpha-1}/\Gamma(\alpha)$ we have
\[
\partial_t^{1-\alpha}v(x,t)=\partial_t \FI^\alpha v(x,t)
\quad\text{where}\quad
	\FI^\alpha v(x,t)=\int_0^t\omega_\alpha(t-s)v(x,s)\,ds.
\]

We denote by~$W^k_p(\Omega)$ the usual Sobolev space of functions whose partial 
derivatives of order~$k$ or less belong to~$L_p(\Omega)$. The following 
regularity assumptions on the coefficients will be used:
\begin{equation}\label{eq: reg coeff} 
\begin{gathered}
\kappa\in L_\infty(\Omega)^{d\times d},\qquad
\vec F, \vec G\in C^2\bigl([0,T];W^1_\infty(\Omega)^d\bigr),\\
a, b\in C^1\bigl([0,T];L_\infty(\Omega)\bigr).
\end{gathered}
\end{equation}
In addition, to ensure that the spatial 
operator~$v\mapsto-\nabla\cdot(\kappa\nabla v)$ is uniformly elliptic 
on~$\Omega$, we assume that the minimal eigenvalue of~$\kappa(x)$ is bounded 
away from zero, uniformly for~$x\in\Omega$.

Different classes of time-fractional models (typically considered only for 
scalar~$\kappa$) arise as special cases of~\eqref{eq: FPDE}, including
\begin{itemize}
\item fractional Fokker--Planck equations~\cite{AngstmannEtAl2017,
HenryLanglandsStraka2010,KlafterSokolov2011,MetzlerBarkaiKlafter1999}, when 
$\vec G=\veczero$, 
$a=b=0$ and $g=0$; \item fractional reaction-diffusion 
equations~\cite{HenryLanglandWearne2006,HenryWearne2000},
when $\vec F=\vec G=\veczero$;
\item fractional cable equations~\cite{LanglandsHenryWearne2011}, 
when $\vec F=\vec G=\veczero$;
\item fractional advection-dispersion (or fractional convection-diffusion) 
equations~\cite{LiuAnhTurnerZhuang2003}, when  $\vec F=\vec F(x)$, 
$\vec G=\veczero$ and $a=b=0$.
\end{itemize}
Consider the simplest non-trivial case, when $\kappa$ is the identity 
matrix with $\vec F=\vec G=\veczero$, $a=b=0$~and $g=0$, so that 
\eqref{eq: FPDE} reduces to the fractional subdiffusion equation: 
$\partial_t u-\nabla^2\partial_t^{1-\alpha}u=0$. Let
$\varphi$ denote a Dirichlet eigenfunction of the Laplacian on~$\Omega$, with 
corresponding eigenvalue~$\lambda>0$, that is,
$-\nabla^2\varphi=\lambda\varphi$ in $\Omega$ with 
$\varphi|_{\partial\Omega}=0$.  For the special choice of initial data 
$u_0=\varphi(x)$, the solution of the initial-boundary value 
problem~\eqref{eq: FPDE}--\eqref{eq: initial condition} has the separable 
form~$u(x,t)=E_\alpha(-\lambda t^\alpha)\varphi(x)$, where
$E_\alpha(z)=\sum_{n=0}^\infty z^n/\Gamma(1+n\alpha)$ is the Mittag--Leffler
function.  Notice that $\partial_t^mu=O(t^{\alpha-m})$ as~$t\to0$.  Moreover, 
we can extend the classical method of separation of variables for the heat 
equation to construct a series solution for arbitrary initial 
data~$u_0\in L_2(\Omega)$,  and the regularity properties of the solution~$u$ 
follow from this representation~\cite{McLean2010}.

Such an explicit construction is no longer possible for the solution of 
the general equation~\eqref{eq: FPDE}. Instead, we proceed by formally 
integrating \eqref{eq: FPDE} in time, multiplying both sides by a test 
function~$v$, and applying the first Green identity over~$\Omega$ to
arrive at the weak formulation
\begin{multline}\label{eq: weak}
\iprod{u(t),v}
+\int_0^t\bigiprod{\kappa\nabla\partial_s^{1-\alpha}u(s)
	-\vec F(s)\partial_s^{1-\alpha}u(s)-\vec G(s)u(s),\nabla v}\,ds\\
+\int_0^t\bigiprod{a(s)\partial_s^{1-\alpha}u(s)+b(s)u(s),v}\,ds
	=\iprod{u_0,v}+\int_0^t\iprod{g(s),v}\,ds
\end{multline}
for all $v\in H^1_0(\Omega)$, where we have suppressed the dependence of the 
functions on~$x$, and where $\iprod{\cdot,\cdot}$ denotes the inner product in 
$L_2(\Omega)$~or $L_2(\Omega)^d$.

Numerical methods for particular cases of~\eqref{eq: FPDE} were extensively 
studied over the last two decades~\cite{CuestaLubichPalencia2006, JinLiZhou2018,
KaraaPani2018, LanglandsHenry2005, LeMcLeanMustapha2018, LiaoLiZhang2018, 
LinXu2007, Mustapha2015, StynesEtAl2017, YusteAcedo2005}. However, due to 
various types of mathematical difficulties, proof of the well-posedness
of the continuous problem is almost missing despite its importance, 
apart from the case~\cite{McLean2010} when $\vec F=\vec G=\veczero$ and 
$a=b=0$. In this paper, we address these fundamental questions.  A related
preprint~\cite{LeMcLeanStynes2018} treats the fractional Fokker--Planck equation
(that is, the case $\vec G=\veczero$ and $a=b=0$) via a different, and somewhat
simpler, chain of estimates that, for instance, does not use the quadratic 
operator~$\mathcal{Q}^\mu_1$ defined below in~\cref{sec: prelim}.

If the coefficients $\vec F$~and $a$ are independent of~$t$, and if 
$\vec G=\veczero$~and $b=0$, then by applying the fractional integration 
operator~$\FI^{1-\alpha}$ to both sides of~\eqref{eq: FPDE} we obtain
\begin{equation}\label{eq: alt FPDE}
\CFD{\alpha}{t}u-\nabla\cdot(\kappa\nabla u-\vec F u)+au=\tilde g,
\end{equation}
where $\CFD{\alpha}{t}u=\FI^{1-\alpha}\partial_tu$ denotes the Caputo 
fractional derivative and where $\tilde g=\FI^{1-\alpha}g$.  Existence 
and uniqueness results for~\eqref{eq: alt FPDE} were studied by several authors, 
including Zacher~\cite{Zacher2009}, Alikhanov~\cite{Alikhanov2010},
Sakamoto and Yamamoto~\cite{SakamotoYamamoto2011} and
Kubica and Yamamoto~\cite{KubicaYamamoto2018}.  Some of these papers include 
results for time-dependent coefficients, but in that case \eqref{eq: alt FPDE} 
is no longer equivalent to~\eqref{eq: FPDE}.

To recast the weak formulation~\eqref{eq: weak} as a Volterra 
integral equation, we introduce two bounded linear
operators, firstly $K_1(t):H^1_0(\Omega)\to H^{-1}(\Omega)$ defined by
\[
\iprod{K_1(t)v,w}=\iprod{\kappa\nabla v,\nabla w}
	-\iprod{\vec F(t)v,\nabla w}+\iprod{a(t)v,w}
\quad\text{for $v$, $w\in H^1_0(\Omega)$,}
\]
and secondly $K_2(t):L_2(\Omega)\to H^{-1}(\Omega)$ by
\[
\iprod{K_2(t)v,w}=\iprod{b(t)v,w}-\iprod{\vec G(t)v,\nabla w}
	\quad\text{for $v\in L_2(\Omega)$ and $w\in H^1_0(\Omega)$.}
\]
The variational problem~\eqref{eq: weak}, subject to the initial 
condition~\eqref{eq: initial condition}, can then be written more succinctly as
\begin{equation}\label{eq: Volterra eqn prelim}
u(t)+\int_0^t\bigl[K_1(s)\partial_s^{1-\alpha}u(s)+K_2(s)u(s)\bigr]\,ds=f(t)
\equiv u_0+\int_0^tg(s)\,ds.
\end{equation}
Integrating by parts, and using a dash to indicate a derivative in time, leads 
to
\begin{multline*}
\int_0^tK_1(s)\partial_s^{1-\alpha}u(s)\,ds=K_1(t)\FI^\alpha u(t)
	-\int_0^tK_1'(s)\FI^\alpha u(s)\,ds\\
=\int_0^t\biggl(\omega_\alpha(t-s)K_1(t)
	-\int_s^t\omega_\alpha(z-s)K_1'(z)\,dz\biggr)u(s)\,ds,
\end{multline*}
with~$K_1'(t):H^1_0(\Omega)\to H^{-1}(\Omega)$ given by
\[
\iprod{K_1'(t)v,w}=-\iprod{\vec F'(t)v,\nabla w}+\iprod{a'(t)v,w}.
\]
Thus, $u$ satisfies 
\begin{equation}\label{eq: Volterra eqn}
u(t)+\int_0^tK(t,s)u(s)\,ds=f(t)\quad\text{for $0\le t\le T$,}
\end{equation}
where~$K(t,s):H^1_0(\Omega)\to H^{-1}(\Omega)$ is the weakly-singular, 
operator-valued kernel
\begin{equation}\label{eq: K def}
K(t,s)=\omega_\alpha(t-s)K_1(t)+K_2(s)
	-\int_s^t\omega_\alpha(z-s)K_1'(z)\,dz.
\end{equation}

Following some technical preliminaries in \cref{sec: prelim}, we apply the 
Gal\-erkin method in \cref{sec: projected} to project the 
problem~\eqref{eq: Volterra eqn} to a finite dimensional
subspace~$X\subseteq H^1_0(\Omega)$, thereby obtaining an approximate
solution~$u_X:[0,T]\to X$. Using delicate energy arguments and a fractional
Gronwall inequality, we prove \emph{a priori} estimates for~$u_X$ that are
uniform with respect to the dimension of~$X$, allowing us in
\cref{sec: weak} (\cref{thm: existence,thm: uniqueness}) to establish the 
existence and uniqueness of a weak solution~$u$ to the original problem 
\eqref{eq: FPDE}--\eqref{eq: initial condition}, provided 
\eqref{eq: reg coeff} holds. 

The regularity of the weak solution~$u$ will be studied in a companion 
paper~\cite{McLeanEtAl2019}.
\section{Preliminaries and notations}\label{sec: prelim}
Our subsequent analysis makes frequent use of two quadratic operators
defined, for $\mu\ge0$ and $0\le t\le T$, by
\[
\Q^\mu_1(\phi,t)=\int_0^t\iprod{\phi,\FI^\mu\phi}\,ds
\quad\text{and}\quad
\Q^\mu_2(\phi,t)=\int_0^t\|\FI^\mu\phi\|^2\,ds.
\]
These operators coincide when~$\mu=0$ because $\FI^0\phi=\phi$, and so we write 
$\Q^0=\Q_1^0=\Q_2^0$. If we put $\phi(t)=0$ for~$t>T$, then the Laplace
transform~$\hat\phi(z)=\int_0^T e^{-zt}\phi(t)\,dt$ is an entire function 
and $\widehat{\FI^\mu\phi}(z)=z^{-\mu}\hat\phi(z)$, so it follows by the 
Plancherel Theorem that
\begin{equation}\label{eq: Q1 Plancherel}
\Q^\mu_1(\phi,T)=\frac{\cos(\pi\mu/2)}{\pi}
	\int_0^\infty y^{-\mu}\|\hat\phi(iy)\|^2\,dy\ge0,
\end{equation}
assuming that $\phi$ is real-valued; see also \cite[Theorem~2]{NohelShea1976}.
Note that because $\omega_\mu\in L_1(0,T)$, the fractional integral defines a 
bounded linear operator  
\begin{equation}\label{eq: I mu bounded}
\FI^\mu:L_p\bigl((0,T),L_2(\Omega)\bigr)\to 
	L_p\bigl((0,T),L_2(\Omega)\bigr)\quad\text{for~$1\le p\le\infty$.}
\end{equation}
Also, $\FI^{\mu+\nu}=\FI^\mu\FI^\nu$ because 
$\omega_\mu*\omega_\nu=\omega_{\mu+\nu}$ for $\mu>0$ and $\nu>0$; here, $*$
denotes the Laplace convolution.

The next four lemmas establish key inequalities satisfied by $\Q^\mu_1$~and
$\Q^\mu_2$.

\begin{lemma}\label{lem: alpha dep}
If $0<\alpha<1$ and $\epsilon>0$, then
\begin{gather}
\biggl|\int_0^t\iprod{\phi,\FI^\alpha\psi}\,ds\biggr|
	\le\frac{\Q^\alpha_1(\phi,t)}{4\epsilon(1-\alpha)^2}
	+\epsilon\,\Q_1^\alpha(\psi,t), \label{eq: A}\\
\Q^\alpha_2(\phi,t)\le\frac{2t^\alpha}{1-\alpha}\,
	\Q^\alpha_1(\phi,t), \label{eq: B}\\
\Q^\alpha_1(\phi,t)\le2t^\alpha\,\Q^0(\phi,t),\label{eq: C}\\
\biggl|\int_0^t\iprod{\phi,\FI^\alpha\psi}\,ds\biggr|
        \le\frac{t^\alpha\Q^0(\phi,t)}{2\epsilon(1-\alpha)^2}
        +\epsilon\,\Q^\alpha_1(\psi,t). \label{eq: AC}
\end{gather}
\end{lemma}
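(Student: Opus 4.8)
The four bounds will be obtained from the Plancherel-type identity~\eqref{eq: Q1 Plancherel}, the Cauchy--Schwarz and Young ($ab\le a^2/(4\delta)+\delta b^2$) inequalities, and the elementary estimate $\cos(\pi\alpha/2)\ge1-\alpha$ valid for $0<\alpha<1$; the last holds because, writing $\beta=1-\alpha\in[0,1]$, the function $\beta\mapsto\cos\bigl(\tfrac\pi2(1-\beta)\bigr)=\sin(\tfrac{\pi\beta}{2})$ is concave on $[0,1]$ and equals $\beta$ at both endpoints. We note at once that~\eqref{eq: AC} follows by combining~\eqref{eq: A} and~\eqref{eq: C}: in the first term on the right of~\eqref{eq: A}, bound $\Q^\alpha_1(\phi,t)$ by $2t^\alpha\Q^0(\phi,t)$ using~\eqref{eq: C} and rename~$\epsilon$. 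So it suffices to prove~\eqref{eq: A},~\eqref{eq: C} and~\eqref{eq: B}.

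\emph{Proof of~\eqref{eq: A}.} Extending $\phi$ and $\psi$ by zero for $t>T$ changes neither side, and the Laplace-transform computation underlying~\eqref{eq: Q1 Plancherel} (cf.~\cite{NohelShea1976}) gives
\[
\Bigl|\int_0^t\iprod{\phi,\FI^\alpha\psi}\,ds\Bigr|
=\frac1\pi\Bigl|\operatorname{Re}\Bigl(e^{i\pi\alpha/2}\!\int_0^\infty\! y^{-\alpha}\iprod{\hat\phi(iy),\hat\psi(iy)}\,dy\Bigr)\Bigr|
\le\frac1\pi\int_0^\infty y^{-\alpha}\,\|\hat\phi(iy)\|\,\|\hat\psi(iy)\|\,dy .
\]
By Cauchy--Schwarz for the positive measure $y^{-\alpha}\,dy$ and then~\eqref{eq: Q1 Plancherel}, the right-hand side is at most $\bigl(1/\cos(\pi\alpha/2)\bigr)\sqrt{\Q^\alpha_1(\phi,t)\,\Q^\alpha_1(\psi,t)}$; now apply $ab\le a^2/(4\delta)+\delta b^2$ with $\delta=\epsilon\cos(\pi\alpha/2)$ and use $1/\cos^2(\pi\alpha/2)\le1/(1-\alpha)^2$ to obtain~\eqref{eq: A}.

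\emph{Proof of~\eqref{eq: C}.} Since $\|\FI^\alpha\phi(s)\|\le\int_0^s\omega_\alpha(s-r)\|\phi(r)\|\,dr$, the Cauchy--Schwarz inequality with weight $\omega_\alpha(s-r)$ and the identity $\int_0^s\omega_\alpha(s-r)\,dr=\omega_{\alpha+1}(s)$ give $\|\FI^\alpha\phi(s)\|^2\le\omega_{\alpha+1}(s)\int_0^s\omega_\alpha(s-r)\|\phi(r)\|^2\,dr$. Integrating over $s\in(0,t)$ and interchanging the order of integration,
\[
\Q^\alpha_2(\phi,t)\le\omega_{\alpha+1}(t)\int_0^t\|\phi(r)\|^2\,\omega_{\alpha+1}(t-r)\,dr
\le\omega_{\alpha+1}(t)^2\,\Q^0(\phi,t)=\frac{t^{2\alpha}}{\Gamma(1+\alpha)^2}\,\Q^0(\phi,t).
\]
On the other hand $\Q^\alpha_1(\phi,t)=\int_0^t\iprod{\phi(s),\FI^\alpha\phi(s)}\,ds\le\sqrt{\Q^0(\phi,t)\,\Q^\alpha_2(\phi,t)}$ by Cauchy--Schwarz in $L_2\bigl((0,t),L_2(\Omega)\bigr)$, so $\Q^\alpha_1(\phi,t)\le\bigl(t^\alpha/\Gamma(1+\alpha)\bigr)\Q^0(\phi,t)\le2t^\alpha\Q^0(\phi,t)$, since $\Gamma(1+\alpha)\ge\tfrac12$ for $0\le\alpha\le1$.

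\emph{Proof of~\eqref{eq: B} --- the crux.} Expanding $\phi=\sum_k\phi_k e_k$ in an orthonormal basis $\{e_k\}$ of $L_2(\Omega)$ and noting that $\FI^\alpha$ acts only in the time variable, one has $\Q^\alpha_1(\phi,t)=\sum_k\Q^\alpha_1(\phi_k,t)$ and $\Q^\alpha_2(\phi,t)=\sum_k\Q^\alpha_2(\phi_k,t)$, so it suffices to treat scalar-valued $\phi$. Regarding $A:=\FI^\alpha$ as a Volterra operator on $L_2(0,t)$ and putting $\lambda:=t^\alpha/(1-\alpha)$, inequality~\eqref{eq: B} amounts to the operator bound $A^*A\le\lambda(A+A^*)$, equivalently $\|A-\lambda I\|_{L_2(0,t)}\le\lambda$. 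My plan is to prove this directly from the weakly-singular kernels: $\Q^\alpha_2(\phi,t)$ is the quadratic form of $N(s,r)=\int_{\max(s,r)}^t\omega_\alpha(u-s)\omega_\alpha(u-r)\,du$ and $2\,\Q^\alpha_1(\phi,t)$ that of $\omega_\alpha(|s-r|)$, and monotonicity of $\omega_\alpha$ already yields the \emph{pointwise} bound $N(s,r)\le\omega_{\alpha+1}(t)\,\omega_\alpha(|s-r|)\le\lambda\,\omega_\alpha(|s-r|)$. The hard point will be to upgrade this to the corresponding inequality \emph{between quadratic forms}; for that I intend to use the subordination formula $\omega_\alpha(|x|)=\pi^{-1}\sin(\pi\alpha)\int_0^\infty\tau^{-\alpha}e^{-\tau|x|}\,d\tau$, which writes $A$, $A+A^*$ and $A^*A$ as $\tau$-superpositions of the elementary exponential-kernel Volterra operators $A_\tau$ ($A_\tau+A_\tau^*$ having kernel $e^{-\tau|s-r|}$), reducing matters to a $\tau$-uniform estimate for those simple operators. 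The blow-up of $\lambda$ as $\alpha\to1$ reflects the borderline nature of~\eqref{eq: B}: for $\alpha\ge\tfrac12$ the function $\FI^\alpha\phi$ need not lie in $L_2(0,\infty)$, so~\eqref{eq: Q1 Plancherel} is unavailable for it, and a naive bootstrap from~\eqref{eq: A} (using~\eqref{eq: C} to estimate $\Q^\alpha_1(\FI^\alpha\phi,t)\le2t^\alpha\Q^\alpha_2(\phi,t)$) delivers only the weaker constant $O\bigl(t^\alpha/(1-\alpha)^2\bigr)$. Carrying the kernel argument through to the sharp constant is where I expect the real difficulty to lie.
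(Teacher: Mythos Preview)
Your treatments of~\eqref{eq: A}, \eqref{eq: C} and~\eqref{eq: AC} are correct and self-contained (the paper itself just cites~\cite{LeMcLeanMustapha2018} for~\eqref{eq: A}--\eqref{eq: C} and combines~\eqref{eq: A} with~\eqref{eq: C} for~\eqref{eq: AC}, exactly as you do). The gap is~\eqref{eq: B}. You correctly recast it as the operator inequality $A^*A\le\lambda(A+A^*)$ on~$L_2(0,t)$ with $A=\FI^\alpha$ and $\lambda=t^\alpha/(1-\alpha)$, and you verify the pointwise kernel bound $N(s,r)\le\omega_{\alpha+1}(t)\,\omega_\alpha(|s-r|)$; but, as you note, a pointwise kernel inequality does not imply the corresponding quadratic-form inequality. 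Your proposed fix via the subordination $\omega_\alpha(x)=c_\alpha\int_0^\infty\tau^{-\alpha}e^{-\tau x}\,d\tau$ does not close this: it gives $A=c_\alpha\int_0^\infty\tau^{-\alpha}A_\tau\,d\tau$, so $A^*A$ becomes the \emph{double} integral $c_\alpha^2\int_0^\infty\!\int_0^\infty\tau^{-\alpha}\sigma^{-\alpha}A_\tau^*A_\sigma\,d\tau\,d\sigma$, and the cross terms~$A_\tau^*A_\sigma$ with $\tau\ne\sigma$ prevent any reduction to ``a $\tau$-uniform estimate for those simple operators''. The plan therefore lacks precisely the step you yourself flag as the hard one.

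A clean alternative, independent of the cited reference, is to use \cref{lem: D} of the present paper, whose proof (via Alikhanov's identity) does not rely on~\eqref{eq: B}: since $s\mapsto\Q^\alpha_1(\phi,s)$ is nondecreasing, \cref{lem: D} yields $\Q^\alpha_2(\phi,t)\le2\omega_{\alpha+1}(t)\,\Q^\alpha_1(\phi,t)=\bigl(2t^\alpha/\Gamma(1+\alpha)\bigr)\Q^\alpha_1(\phi,t)$, which is actually sharper than~\eqref{eq: B} because $\Gamma(1+\alpha)\ge1-\alpha$ for $0<\alpha<1$. This also shows that the $(1-\alpha)^{-1}$ blow-up you worried about is an artifact of the stated constant rather than of the underlying estimate.
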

\begin{proof} 
The first three inequalities are proved by Le, McLean and 
Mustapha~\cite[Lemma 3.2]{LeMcLeanMustapha2018}. The fourth inequality follows 
from \eqref{eq: A} and \eqref{eq: C}.
\end{proof}

For the next result, note that if $\phi\in W^1_1\bigl((0,T);X\bigr)$ for a 
normed space~$X$, then $\phi:[0,T]\to X$ is absolutely continuous
and
\begin{equation}\label{eq: FI omega}
(\partial_t\FI^\alpha\phi-\FI^\alpha\partial_t\phi)(t)=\phi(0)\omega_\alpha(t)
\quad\text{for $0<t\le T$.}
\end{equation}

\begin{lemma}\label{lem: D}
If $0<\alpha\le1$, then for $\phi \in L_2\bigl((0,t),L_2(\Omega)\bigr)$,
\[
\Q^\alpha_2(\phi,t)\le2\int_0^t \omega_\alpha(t-s)\Q^\alpha_1(\phi,s)\,ds.
\]
\end{lemma}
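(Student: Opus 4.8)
The plan is to express the left-hand side $\Q^\alpha_2(\phi,t)=\int_0^t\|\FI^\alpha\phi(s)\|^2\,ds$ in a way that exhibits it as an iterated Laplace convolution, and then recognize the right-hand side as a closely related convolution. Recall $\FI^\alpha\phi(s)=\int_0^s\omega_\alpha(s-r)\phi(r)\,dr$, so
\[
\|\FI^\alpha\phi(s)\|^2=\bigiprod{\FI^\alpha\phi(s),\FI^\alpha\phi(s)}
=\int_0^s\int_0^s\omega_\alpha(s-r)\omega_\alpha(s-r')\iprod{\phi(r),\phi(r')}\,dr\,dr'.
\]
Integrating in $s$ over $(0,t)$ and exchanging the order of integration, the claimed bound should reduce to a pointwise inequality among the kernels, namely
\[
\int_{\max(r,r')}^t\omega_\alpha(s-r)\omega_\alpha(s-r')\,ds
\le\int_{\max(r,r')}^t\omega_\alpha(t-s)\,\bigl[\text{something symmetric in }r,r'\bigr]\,ds,
\]
but this route is a little awkward to make symmetric, so I would instead argue directly on the scalar level.

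First I would reduce to the scalar case: since $\|\FI^\alpha\phi(s)\|^2\le\int_0^s\int_0^s\omega_\alpha(s-r)\omega_\alpha(s-r')\,\|\phi(r)\|\,\|\phi(r')\|\,dr\,dr'$ by Cauchy--Schwarz on the inner product, and likewise $\Q^\alpha_1(\phi,s)=\int_0^s\iprod{\phi(r),\FI^\alpha\phi(r)}\,dr$ can be compared, it is cleanest to set $\psi(s)=\|\phi(s)\|$ and prove
\[
\int_0^t\bigl(\FI^\alpha\psi(s)\bigr)^2\,ds
\le2\int_0^t\omega_\alpha(t-s)\int_0^s\psi(r)\,\FI^\alpha\psi(r)\,dr\,ds,
\]
where now everything is a convolution of nonnegative functions. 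Writing $g=\FI^\alpha\psi=\omega_\alpha*\psi$, the left side is $\int_0^t g^2$, and the right side is $2\,\omega_\alpha*\bigl(\psi\cdot g\bigr)$ evaluated at $t$ — wait, more precisely $2\int_0^t\omega_\alpha(t-s)\,h(s)\,ds=2(\omega_\alpha*h)(t)$ with $h(s)=\int_0^s\psi g$. The key step will be to establish the pointwise (in $t$) inequality $\frac{d}{dt}\int_0^t g^2 = g(t)^2 \le 2(\omega_\alpha* (\psi g))(t)\cdot\bigl[\text{?}\bigr]$; this is not quite an ODE comparison, so instead I would take Laplace transforms.

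Taking the Laplace transform in $t$ (all functions vanish for $t<0$ and are in $L_1$ locally): with $\hat\psi,\hat g$ the transforms and $\hat g(z)=z^{-\alpha}\hat\psi(z)$, the inequality $\int_0^t g^2\le 2\,\omega_\alpha*\bigl(\int_0^{\cdot}\psi g\bigr)$ at general $t$ would follow from a pointwise kernel inequality after unwinding all convolutions. Concretely, $\int_0^t g(s)^2\,ds=\int_0^t\int_0^s\int_0^s\omega_\alpha(s-r)\omega_\alpha(s-r')\psi(r)\psi(r')\,dr\,dr'\,ds$, and by symmetry in $r,r'$ this is $2\int\!\!\int_{r\le r'}\psi(r)\psi(r')\int_{r'}^t\omega_\alpha(s-r)\omega_\alpha(s-r')\,ds\,dr\,dr'$. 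On the other hand the right-hand side equals $2\int_0^t\omega_\alpha(t-s)\int_0^s\psi(r)\int_0^r\omega_\alpha(r-r')\psi(r')\,dr'\,dr\,ds$, which after interchanging orders becomes $2\int\!\!\int_{r'\le r}\psi(r')\psi(r)\int_r^t\omega_\alpha(t-s)\,ds\,\omega_\alpha(r-r')\,dr'\,dr$. Relabeling so that in both expressions the inner variable is the smaller one, it therefore suffices to prove, for $0\le r'\le r\le t$, the kernel inequality
\[
\int_r^t\omega_\alpha(s-r')\,\omega_\alpha(s-r)\,ds
\;\le\;\omega_\alpha(r-r')\int_r^t\omega_\alpha(t-s)\,ds.
\]
The main obstacle is exactly this elementary but slightly delicate inequality. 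I expect it to follow from monotonicity of $\omega_\alpha$ together with the semigroup/Beta-function identity $\int_r^{\infty}\!\!$-type computation: since $s\mapsto s-r'\ge s-r$ for $s\ge r$ and $\omega_\alpha$ is decreasing (for $0<\alpha<1$; for $\alpha=1$ it is constant), one has $\omega_\alpha(s-r')\le\omega_\alpha(s-r)\cdot\frac{\omega_\alpha(r-r'+(s-r))}{\omega_\alpha(s-r)}$... hmm, that is circular. The cleaner path: bound $\omega_\alpha(s-r')\le\omega_\alpha(r-r')$ when $s\ge r$ is false in general since $s-r'\ge r-r'$ makes $\omega_\alpha$ smaller — good, so $\omega_\alpha(s-r')\le\omega_\alpha(r-r')$ indeed holds for $s\ge r$ (decreasing kernel), giving
\[
\int_r^t\omega_\alpha(s-r')\omega_\alpha(s-r)\,ds
\le\omega_\alpha(r-r')\int_r^t\omega_\alpha(s-r)\,ds
=\omega_\alpha(r-r')\int_r^t\omega_\alpha(t-s)\,ds,
\]
the last equality by the substitution $s\mapsto r+t-s$, which maps $\int_r^t\omega_\alpha(s-r)\,ds$ to $\int_r^t\omega_\alpha(t-s)\,ds$. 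This closes the argument. I would then remark that the reduction via Cauchy--Schwarz from the vector-valued $\phi$ to the scalar $\psi=\|\phi\|$ needs the matching lower bound $\int_0^s\psi(r)\FI^\alpha\psi(r)\,dr\le\Q^\alpha_1(\phi,s)$ — but actually Cauchy--Schwarz gives $\iprod{\phi(r),\FI^\alpha\phi(r)}\le\|\phi(r)\|\,\FI^\alpha\psi(r)=\psi(r)\FI^\alpha\psi(r)$, i.e. the inequality goes the wrong way for the right-hand side. So the vector reduction must be handled more carefully: I would instead keep the inner product throughout, replacing $\psi(r)\psi(r')$ by $\iprod{\phi(r),\phi(r')}$ and noting that the kernel inequality above, being pointwise in $(r,r',s)$ with a nonnegative multiplier, survives testing against $\iprod{\phi(r),\phi(r')}$ after the symmetrization step — since the symmetrized bilinear form $\int\!\!\int\iprod{\phi(r),\phi(r')}k(r,r')\,dr\,dr'$ with $k\ge0$ a positive-type kernel is controlled once $k$ is dominated by the target kernel. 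This positivity bookkeeping is the only subtlety; the scalar computation is the heart of the matter.
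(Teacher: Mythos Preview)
Your kernel inequality
\[
\int_r^t\omega_\alpha(s-r')\,\omega_\alpha(s-r)\,ds
\le\omega_\alpha(r-r')\int_r^t\omega_\alpha(t-s)\,ds
\qquad(0\le r'\le r\le t)
\]
is correct, and its proof via monotonicity of~$\omega_\alpha$ and the substitution $s\mapsto r+t-s$ is clean. However, this inequality only yields the lemma for \emph{nonnegative scalar} $\phi$, and your attempts to pass to general $\phi\in L_2\bigl((0,t);L_2(\Omega)\bigr)$ both fail. You correctly observe that the reduction to $\psi=\|\phi\|$ via Cauchy--Schwarz collapses because the inequality $\iprod{\phi(r),\FI^\alpha\phi(r)}\le\psi(r)\,\FI^\alpha\psi(r)$ points the wrong way for the right-hand side. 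But your fallback---``keep the inner product throughout'' and test the kernel inequality against $\iprod{\phi(r),\phi(r')}$---is not valid either: after unwinding the convolutions, the two sides take the form
\[
\int_0^t\!\!\int_0^t\iprod{\phi(r),\phi(r')}\,K_L(r,r')\,dr'\,dr
\quad\text{versus}\quad
\int_0^t\!\!\int_0^t\iprod{\phi(r),\phi(r')}\,K_R(r,r')\,dr'\,dr,
\]
and the pointwise bound $K_L\le K_R$ does \emph{not} imply the corresponding inequality between these bilinear forms, because $\iprod{\phi(r),\phi(r')}$ can be negative. What you would actually need is that $K_R-K_L$ is a positive semi-definite kernel, which is a strictly stronger statement than $K_R-K_L\ge0$ pointwise; your ``positivity bookkeeping'' remark does not address this.

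The paper avoids this difficulty entirely. It first treats smooth $\phi$, sets $\psi=\FI^\alpha\phi$ so that the Caputo derivative satisfies $\CFD{\alpha}{t}\psi=\phi$, and invokes Alikhanov's pointwise identity to obtain
\[
2\iprod{\phi,\FI^\alpha\phi}
=2\bigiprod{\CFD{\alpha}{t}\psi,\psi}
\ge\CFD{\alpha}{t}\bigl(\|\psi\|^2\bigr)
=\FI^{1-\alpha}\bigl(\|\FI^\alpha\phi\|^2\bigr)'.
\]
Applying $\FI^{1+\alpha}=\FI^\alpha\FI^1$ to both sides and using $\FI^1\FI^{1-\alpha}(\cdot)'=\FI^{1-\alpha}$ (since $\psi(0)=0$) gives exactly $\Q^\alpha_2(\phi,t)\le2\FI^\alpha\Q^\alpha_1(\phi,\cdot)(t)$; a density argument then extends this to all of $L_2\bigl((0,t);L_2(\Omega)\bigr)$. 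The point is that Alikhanov's inequality is a \emph{pointwise-in-time} statement about the Hilbert-space norm, so no sign issue ever arises. In effect, Alikhanov's identity is precisely what certifies that $K_R-K_L$ is positive semi-definite---the fact your argument is missing.
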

\begin{proof}
Assume first that $\phi\in W^1_1\bigl((0,T),L_2(\Omega)\bigr)$ and let
$\psi=\FI^\alpha\phi$.  Since $\psi(0)=0$, the Caputo fractional 
derivative of~$\psi$ is
\[
\CFD{\alpha}{t}\psi=\FI^{1-\alpha}(\psi')
	=(\FI^{1-\alpha}\psi)'-\psi(0)\omega_{1-\alpha}=(\FI^1\phi)'=\phi.
\]
Recalling an identity of Alikhanov~\cite[Corollary1]{Alikhanov2012},
\begin{multline*}
2\bigiprod{\psi(t),\CFD{\alpha}{t}\psi(t)}
	=\CFD{\alpha}{t}\bigl(\|\psi\|^2\bigr)(t)\\
	+\frac{\alpha}{2\Gamma(1-\alpha)}\int_0^t\,\frac{1}{(t-s)^{1-\alpha}}
	\biggl(\int_0^s\frac{\psi'(q)\,dq}{(t-q)^\alpha}\biggr)^2\,ds,
\end{multline*}
we see that
\begin{equation}\label{eq: Alik1}
2\iprod{\phi,\FI^\alpha\phi}=2\iprod{\CFD{\alpha}{t}\psi,\psi}
	\ge\CFD{\alpha}{t}\bigl(\|\psi\|^2\bigr)
	=\FI^{1-\alpha}(\|\FI^\alpha\phi\|^2)',
\end{equation}
and thus 
\begin{align*}
\FI^1\bigl(\|\FI^\alpha\phi\|^2\bigr)
&=\FI^2\bigl(\|\FI^\alpha\phi\|^2\bigr)'
=\FI^{1+\alpha}\FI^{1-\alpha}\bigl(\|\FI^\alpha\phi\|^2\bigr)'\\
&\le2\FI^{1+\alpha}\bigl(\iprod{\phi,\FI^\alpha\phi}\bigr)
=2\FI^\alpha\FI^1\bigl(\iprod{\phi,\FI^\alpha\phi}\bigr),
\end{align*}
which is equivalent to the desired inequality.

Now let~$\phi\in L_2\bigl((0,T),L_2(\Omega)\bigr)$, and choose  
$\phi_n\in W^1_1\bigl((0,T),L_2(\Omega)\bigr)$ such that
$\int_0^T\|\phi_n(t)-\phi(t)\|^2\,dt\to0$ as~$n\to\infty$.  Using 
\eqref{eq: I mu bounded} with $\mu=\alpha$~and $p=2$, it follows that 
$\Q^\alpha_1(\phi_n,t)\to\Q^\alpha_1(\phi,t)$ and
$\Q^\alpha_2(\phi_n,t)\to\Q^\alpha_2(\phi,t)$, uniformly for~$t\in[0,T]$, 
which implies the result in the general case.
\end{proof}

The next lemma will eventually enable us to establish pointwise (in 
time) estimates for~$u(t)$.

\begin{lemma}\label{lem: pointwise bound}
Let $0<\alpha\le 1$. If the function 
$\phi\in W^1_1\bigl((0,t);L_2(\Omega)\bigr)$ satisfies
$\phi(0)=\FI^\alpha\phi'(0)=0$, then
$\|\phi(t)\|^2\le2\omega_{2-\alpha}(t)\,\Q_1^\alpha(\phi',t)$.
\end{lemma}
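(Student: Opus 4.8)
The plan is to express $\phi$ itself as a fractional integral and then combine a Cauchy--Schwarz estimate with the Alikhanov identity already exploited in the proof of \cref{lem: D}. First, since $\phi(0)=0$, one has $\phi=\FI^1\phi'$, and factoring $\FI^1=\FI^{1-\alpha}\FI^\alpha$ gives
\[
\phi(t)=\FI^{1-\alpha}\bigl(\FI^\alpha\phi'\bigr)(t)=\int_0^t\omega_{1-\alpha}(t-s)\,\FI^\alpha\phi'(s)\,ds
\]
(when $\alpha=1$ this degenerates to $\phi=\FI^1\phi'$ and the claim reduces to the elementary identity $\|\phi(t)\|^2=2\,\Q^1_1(\phi',t)$, so assume $\alpha<1$). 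Applying the Cauchy--Schwarz inequality in $L_2(\Omega)$ pointwise and then in the time integral against the measure $\omega_{1-\alpha}(t-s)\,ds$, whose total mass equals $\int_0^t\omega_{1-\alpha}(t-s)\,ds=(\omega_1*\omega_{1-\alpha})(t)=\omega_{2-\alpha}(t)$, yields
\[
\|\phi(t)\|^2\le\omega_{2-\alpha}(t)\int_0^t\omega_{1-\alpha}(t-s)\,\|\FI^\alpha\phi'(s)\|^2\,ds=\omega_{2-\alpha}(t)\,\FI^{1-\alpha}\bigl(\|\FI^\alpha\phi'\|^2\bigr)(t).
\]
It then remains only to prove the pointwise bound $\FI^{1-\alpha}\bigl(\|\FI^\alpha\phi'\|^2\bigr)(t)\le 2\,\Q^\alpha_1(\phi',t)$.

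For the latter I would argue as in the proof of \cref{lem: D}, now with $\phi'$ playing the role of~$\phi$ there. Assume first that $\phi$ is regular enough that $\psi:=\FI^\alpha\phi'\in W^1_1$, so that $t\mapsto\|\psi(t)\|^2$ is absolutely continuous, while $\psi(0)=\FI^\alpha\phi'(0)=0$ by hypothesis. Then the Alikhanov-based inequality~\eqref{eq: Alik1}, read with $\phi'$ in place of $\phi$, gives $2\iprod{\phi',\FI^\alpha\phi'}\ge\FI^{1-\alpha}\bigl(\|\FI^\alpha\phi'\|^2\bigr)'$. Applying $\FI^1$ to both sides, using $\FI^1\FI^{1-\alpha}g'=\FI^{1-\alpha}\FI^1g'=\FI^{1-\alpha}\bigl(g(\cdot)-g(0)\bigr)$ together with $\psi(0)=0$, turns this into $\FI^{1-\alpha}\bigl(\|\FI^\alpha\phi'\|^2\bigr)(t)\le 2\int_0^t\iprod{\phi'(s),\FI^\alpha\phi'(s)}\,ds=2\,\Q^\alpha_1(\phi',t)$, which combined with the second display above is the assertion.

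It remains to remove the extra regularity, and this is where the main obstacle lies. For a general $\phi\in W^1_1\bigl((0,t);L_2(\Omega)\bigr)$ the step above is not directly available, because $\psi=\FI^\alpha\phi'\in W^1_1$ essentially amounts to $\phi\in W^2_1$, so I would approximate: choose smooth $\chi_n\to\phi'$ in $L_1\bigl((0,t);L_2(\Omega)\bigr)$ and set $\phi_n(s)=\int_0^s\chi_n$, so that the regular case applies to $\phi_n$ (note $\FI^\alpha\chi_n(0)=0$ automatically for bounded $\chi_n$) and $\phi_n\to\phi$ uniformly on $[0,t]$, whence $\|\phi_n(t)\|^2\to\|\phi(t)\|^2$. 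The hard part will be passing to the limit on the right-hand side, that is, arranging $\Q^\alpha_1(\chi_n,t)\to\Q^\alpha_1(\phi',t)$: since $\phi'$ need only lie in $L_1$, not $L_2$, the $L_2$-continuity of $\Q^\alpha_1$ that sufficed in \cref{lem: D} is not at our disposal. Instead I would use the Plancherel representation~\eqref{eq: Q1 Plancherel}---if $\Q^\alpha_1(\phi',t)=\infty$ the inequality is trivial, and otherwise $\widehat{\phi'}$ lies in $L_2\bigl((0,\infty),y^{-\alpha}\,dy;L_2(\Omega)\bigr)$, a space in which a suitable mollification of $\phi'$ can be made to converge, yielding the desired convergence of $\Q^\alpha_1(\chi_n,\cdot)$. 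Once this approximation is in place, passing to the limit finishes the proof.
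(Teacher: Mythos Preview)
Your approach is exactly the paper's: write $\phi=\FI^{1-\alpha}(\FI^\alpha\phi')$, apply Cauchy--Schwarz against the weight~$\omega_{1-\alpha}$ to get $\|\phi(t)\|^2\le\omega_{2-\alpha}(t)\,\FI^{1-\alpha}\bigl(\|\FI^\alpha\phi'\|^2\bigr)(t)$, and then invoke the Alikhanov inequality~\eqref{eq: Alik1} with $\phi'$ in place of~$\phi$ (together with $\FI^\alpha\phi'(0)=0$) to bound the last factor by $2\,\Q^\alpha_1(\phi',t)$. The paper does not address the regularity issue you raise in your final paragraph---it simply applies~\eqref{eq: Alik1} with $\phi'$ substituted for~$\phi$ without further comment---so the density step you sketch is extra care beyond what the published proof provides.
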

\begin{proof} For $\alpha=1$, equality holds:
\[
2\omega_1(t)\Q_1^1(\phi',t)=2\int_0^t\iprod{\phi',\phi}\,ds=\|\phi(t)\|^2.
\]
For $0<\alpha<1$, applying the operator $\FI^1$ to both sides 
of~\eqref{eq: Alik1} with $\phi'$ in place of $\phi$, and using 
$\FI^\alpha\phi'(0)=0$, we observe that,
\begin{align}\label{eq: new bound}
\FI^{1-\alpha}\big(\|\FI^\alpha\phi'\|^2\big)(t)&\le 2\Q_1^\alpha(\phi',t).
\end{align}
Put $\psi(t)=\FI^\alpha\phi'$. Since
$\phi=\FI^1\phi'=\FI^{1-\alpha}\psi$,
\begin{align*}
\|\phi(t\|^2&\le\biggl(\int_0^t\omega_{1-\alpha}(t-s)\|\psi(s)\|\,ds
	\biggr)^2\\
	&\le\int_0^t\omega_{1-\alpha}(t-s)
	\,ds\int_0^t\omega_{1-\alpha}(t-s)\|\psi(s)\|^2\,ds\\
	&=\omega_{2-\alpha}(t)\,\FI^{1-\alpha}\Big(\|\FI^\alpha\phi'\|^2\Big)(t),
\end{align*}
and hence the  desired result follows immediately after using
\eqref{eq: new bound}.  
\end{proof}

\begin{lemma}\label{lem: E}
If $0\le\mu\le\nu\le1$, then
$\Q^\nu_2(\phi,t)\le 2t^{2(\nu-\mu)}\Q^\mu_2(\phi,t)$.
\end{lemma}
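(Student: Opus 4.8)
The plan is to reduce the statement to the case $\mu=0$ by means of the semigroup property $\FI^\nu=\FI^{\nu-\mu}\FI^\mu$ recorded above. Writing $\beta=\nu-\mu\in[0,1]$ and $\psi=\FI^\mu\phi$, we have $\FI^\nu\phi=\FI^\beta\psi$, so the claim is equivalent to
\[
\Q^\beta_2(\psi,t)=\int_0^t\|\FI^\beta\psi\|^2\,ds\le 2t^{2\beta}\int_0^t\|\psi\|^2\,ds .
\]
For $\beta=0$ this holds trivially (the left side is $\int_0^t\|\psi\|^2\,ds$ and $1\le 2$), so from now on assume $0<\beta\le1$.

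For the reduced inequality I would first apply the Cauchy--Schwarz inequality to $\FI^\beta\psi(s)=\int_0^s\omega_\beta(s-r)\psi(r)\,dr$, splitting $\omega_\beta(s-r)=\omega_\beta(s-r)^{1/2}\cdot\omega_\beta(s-r)^{1/2}$ and using $\omega_\beta*\omega_1=\omega_{1+\beta}$, to get
\[
\|\FI^\beta\psi(s)\|^2\le\omega_{1+\beta}(s)\int_0^s\omega_\beta(s-r)\|\psi(r)\|^2\,dr
=\omega_{1+\beta}(s)\,\bigl(\omega_\beta*\|\psi\|^2\bigr)(s).
\]
Next, integrate over $s\in(0,t)$. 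Since $\beta\ge0$, the weight $\omega_{1+\beta}$ is non-decreasing, so $\omega_{1+\beta}(s)\le\omega_{1+\beta}(t)$ on $[0,t]$; and $\int_0^t(\omega_\beta*\|\psi\|^2)(s)\,ds=(\omega_{1+\beta}*\|\psi\|^2)(t)\le\omega_{1+\beta}(t)\int_0^t\|\psi\|^2\,ds$ by the same monotonicity. Hence
\[
\int_0^t\|\FI^\beta\psi\|^2\,ds\le\omega_{1+\beta}(t)^2\int_0^t\|\psi\|^2\,ds
=\frac{t^{2\beta}}{\Gamma(1+\beta)^2}\int_0^t\|\psi\|^2\,ds .
\]

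It then remains only to check that $1/\Gamma(1+\beta)^2\le2$, equivalently $\Gamma(1+\beta)\ge1/\sqrt2$, for all $0\le\beta\le1$. Because $\Gamma$ is convex on $(0,\infty)$ with $\Gamma(1)=\Gamma(2)=1$, its minimum over $[1,2]$ is attained at the unique interior critical point and equals $\approx 0.886>1/\sqrt2$, so the estimate holds with room to spare; undoing the substitution $\psi=\FI^\mu\phi$ then gives the lemma. There is no real obstacle here: the computation is a mild variant of the proof of~\eqref{eq: C} in \cref{lem: alpha dep}, and the only slightly delicate point is the closing numerical bound on $\Gamma$, which is precisely what makes the clean constant~$2$ admissible.
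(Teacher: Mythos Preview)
Your argument is correct. The reduction via the semigroup identity $\FI^\nu=\FI^{\nu-\mu}\FI^\mu$ is valid (the borderline cases $\mu=0$ and $\nu=\mu$ being trivial), the Cauchy--Schwarz step with the split $\omega_\beta=\omega_\beta^{1/2}\cdot\omega_\beta^{1/2}$ gives exactly the bound $\|\FI^\beta\psi(s)\|^2\le\omega_{1+\beta}(s)\,(\omega_\beta*\|\psi\|^2)(s)$, and the two subsequent uses of the monotonicity of $\omega_{1+\beta}$ are clean. The numerical check $\Gamma(1+\beta)\ge\min_{[1,2]}\Gamma\approx0.8856>1/\sqrt2$ is accurate and closes the argument with the stated constant~$2$.

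As for comparison: the paper does not actually prove this lemma in-text but simply cites \cite[Lemma~3.1]{LeMcLeanMustapha2018}. Your proof is therefore a self-contained replacement for that citation. The method you use---reduce to $\mu=0$ by the semigroup property, then apply Cauchy--Schwarz to the convolution kernel---is the natural one and is almost certainly what the cited reference does as well, so there is no substantive methodological difference to discuss.
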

\begin{proof}
See Le, McLean and Mustapha~\cite[Lemma~3.1]{LeMcLeanMustapha2018}.
\end{proof}

We will make essential use of the following fractional Gronwall inequality.

\begin{lemma}\label{lem: Gronwall}
Let $\beta>0$ and $T>0$.  Assume that $\mathsf{a}$ and $\mathsf{b}$ are
non-negative, non-decreasing functions on the interval~$[0,T]$.  If
$\mathsf{q}:[0,T]\to\R$ is an integrable function satisfying
\[
0\le\mathsf{q}(t)\le\mathsf{a}(t)
	+\mathsf{b}(t)\int_0^t\omega_\beta(t-s)\mathsf{q}(s)\,ds
    \quad\text{for $0\le t\le T$,}
\]
then
\[
\mathsf{q}(t)\le\mathsf{a}(t)E_\beta\bigl(\mathsf{b}(t)t^\beta\bigr)
    \quad\text{for $0\le t\le T$.}
\]
\end{lemma}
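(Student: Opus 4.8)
The plan is to prove this by the classical Picard-iteration / successive-substitution argument, exploiting that the kernel $\omega_\beta$ is a convolution kernel whose iterated convolutions are again explicit Riemann--Liouville kernels. First I would fix $t\in[0,T]$ and, using monotonicity of $\mathsf{a}$ and $\mathsf{b}$, replace $\mathsf{a}(s)$ by $\mathsf{a}(t)$ and $\mathsf{b}(s)$ by $\mathsf{b}(t)$ whenever they appear inside an integral over $[0,s]\subseteq[0,t]$; this lets me treat $\mathsf{a}(t)$ and $\mathsf{b}(t)$ as constants, say $A=\mathsf{a}(t)$ and $B=\mathsf{b}(t)$, for the remainder of the argument at this fixed $t$. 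I would then iterate the hypothesis: substituting the bound for $\mathsf{q}$ into itself $n$ times produces
\[
\mathsf{q}(t)\le A\sum_{k=0}^{n-1}B^k(\omega_\beta^{*k}*1)(t)
	+B^n\bigl(\omega_\beta^{*n}*\mathsf{q}\bigr)(t),
\]
where $\omega_\beta^{*k}$ denotes the $k$-fold Laplace convolution of $\omega_\beta$ with itself, with $\omega_\beta^{*0}$ the identity under convolution.

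The key computational step is the semigroup identity $\omega_\beta*\omega_\beta=\omega_{2\beta}$, and more generally $\omega_\beta^{*k}=\omega_{k\beta}$, which was already recorded in the excerpt (the remark that $\omega_\mu*\omega_\nu=\omega_{\mu+\nu}$). Hence $(\omega_\beta^{*k}*1)(t)=(\omega_{k\beta}*\omega_1)(t)=\omega_{k\beta+1}(t)=t^{k\beta}/\Gamma(1+k\beta)$, so the finite sum is exactly the partial sum of the Mittag--Leffler series: $A\sum_{k=0}^{n-1}B^k t^{k\beta}/\Gamma(1+k\beta)\to A\,E_\beta(Bt^\beta)$ as $n\to\infty$. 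It then remains to show that the remainder term $R_n(t):=B^n(\omega_{n\beta}*\mathsf{q})(t)$ tends to zero. Since $\mathsf{q}$ is integrable on $[0,t]$, I would estimate $|R_n(t)|\le B^n\sup_{s\le t}\omega_{n\beta}(s)\cdot\|\mathsf{q}\|_{L_1(0,t)}$ when $n\beta\ge1$; because $\omega_{n\beta}(s)=s^{n\beta-1}/\Gamma(n\beta)$ is, for $n\beta\ge1$, bounded on $[0,t]$ by $\max(1,t^{n\beta-1})/\Gamma(n\beta)$, and $\Gamma(n\beta)$ grows super-exponentially in $n$ while $B^n\max(1,t^{n\beta-1})$ grows only exponentially, we get $R_n(t)\to0$. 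Passing to the limit $n\to\infty$ in the iterated inequality yields $\mathsf{q}(t)\le A\,E_\beta(Bt^\beta)=\mathsf{a}(t)E_\beta(\mathsf{b}(t)t^\beta)$, which is the claim.

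The main obstacle I anticipate is purely bookkeeping rather than conceptual: one must justify the repeated substitution and the associated Fubini interchanges for a merely integrable (not continuous) $\mathsf{q}$, and one must handle the monotonicity replacement carefully so that the constants $A$ and $B$ are legitimately frozen at their values at the right endpoint $t$ while the inner integrals still range over subintervals. A clean way to organize this is to first prove the statement with $\mathsf{a}$ and $\mathsf{b}$ genuinely constant, then recover the general monotone case by noting that for each fixed $t$ the function $\mathsf{q}$ satisfies, on $[0,t]$, the constant-coefficient hypothesis with constants $\mathsf{a}(t)$ and $\mathsf{b}(t)$. Alternatively, one can cite the version of this inequality established in the literature (e.g. the fractional Gronwall lemmas of Ye--Gao--Ding or of the authors' own earlier work); but the self-contained Picard argument above is short enough to include directly.
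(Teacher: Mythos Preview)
Your Picard-iteration argument is correct and is in fact the standard route to this inequality: freezing $\mathsf{a}$ and $\mathsf{b}$ at their values at the right endpoint via monotonicity, iterating the integral inequality, evaluating the iterated kernels via $\omega_\beta^{*k}=\omega_{k\beta}$, recognizing the partial Mittag--Leffler sum, and killing the remainder using integrability of $\mathsf{q}$ together with the super-exponential growth of $\Gamma(n\beta)$. The bookkeeping points you flag (Fubini for integrable $\mathsf{q}$, and the two-step reduction to constant coefficients) are the only things to be careful about, and you have handled them correctly. One tiny cosmetic remark: for $n\beta\ge1$ the kernel $\omega_{n\beta}$ is non-decreasing on $[0,t]$, so $\sup_{s\le t}\omega_{n\beta}(s)=t^{n\beta-1}/\Gamma(n\beta)$ without the $\max(1,\cdot)$; this does not affect the conclusion.

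As for comparison with the paper: the paper does not give a proof at all but simply cites Dixon and McKee~\cite[Theorem~3.1]{DixonMcKee1986}. You anticipated exactly this possibility in your closing sentence. Your self-contained argument is therefore strictly more informative than what the paper provides, and it is essentially the same argument that underlies the cited reference (and the later Ye--Gao--Ding version you mention). Including it costs only a paragraph and removes an external dependency, which is a reasonable trade-off.
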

\begin{proof}
See Dixon and McKee~\cite[Theorem~3.1]{DixonMcKee1986}.
\end{proof}

Let $\Mult$ denote the operator of pointwise multiplication by~$t$, that 
is, $(\Mult\phi)(t)=t\phi(t)$, and note the commutator property
\begin{equation}\label{eq: commutator}
\Mult\FI^\mu-\FI^\mu\Mult=\mu\FI^{\mu+1},
\end{equation}
for any real~$\mu\ge0$.  We will need the following estimates involving the 
linear operator~$B^\mu_\psi$ defined (for suitable $\psi$~and $\phi$) by
\begin{equation}\label{eq: Yd B}
(B^\mu_\psi\phi)(t)=\psi(t)\,\FI^\mu\phi(t)
	-\int_0^t \psi'(s)\,\FI^\mu\phi(s)\,ds.
\end{equation}

\begin{lemma} If $\psi\in W^1_\infty\bigl((0,T);L_\infty(\Omega)^d\bigr)$
and $\phi\in W^1_1\bigl((0,T);L_2(\Omega)\bigr)$, then there is a constant~$C$ 
(depending only on $\psi$, $\mu$~and $T$) such that for~$0\le t\le T$,
\begin{gather}
\Q^0(B_\psi^\mu\phi,t)\le C\Q^\mu_2(\phi,t),
	\label{eq: B1 estimate int}\\
\Q^0(\Mult B_\psi^\mu\phi,t)+\Q^0(\FI^1 B_\psi^\mu\phi,t)
	\le Ct^2\Q^\mu_2(\phi,t), \label{eq: B3 estimate int}\\
\Q^0\bigl((\Mult B^\mu_\psi\phi)',t\bigr)
    \le C\Q^\mu_2\bigl((\Mult\phi)',t\bigr)+C\Q^\mu_2(\Mult\phi,t)
	+C\Q^\mu_2(\phi,t). \label{eq: B5 estimate int}
\end{gather}\end{lemma}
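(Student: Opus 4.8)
The three bounds are established in turn, with \eqref{eq: B1 estimate int} doing the real work and the other two following from it by fairly mechanical steps. Throughout, write $\Q^0(\chi,t)=\int_0^t\|\chi(s)\|^2\,ds$, let $C$ denote a generic constant depending only on $\mu$, $T$ and the $L_\infty$-norms of $\psi$ and $\psi'$, and use repeatedly that $t\mapsto\Q^\mu_2(\phi,t)$ and $t\mapsto\Q^0(\chi,t)$ are non-decreasing. The overall plan is: (i) get a pointwise-in-time bound on $\|(B^\mu_\psi\phi)(t)\|$ in terms of $\FI^\mu\phi$ and integrate it; (ii) obtain \eqref{eq: B3 estimate int} by observing that inserting $\Mult$ or $\FI^1$ only costs a factor $O(t^2)$; (iii) for \eqref{eq: B5 estimate int}, differentiate $\Mult B^\mu_\psi\phi$, notice that the $\psi'$-terms cancel, and use the commutator \eqref{eq: commutator} together with \eqref{eq: FI omega} to rewrite the derivative in a form directly controlled by the right-hand side.

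For \eqref{eq: B1 estimate int}: from the definition \eqref{eq: Yd B}, the triangle inequality, and bounding the product $\psi(s)\,\FI^\mu\phi(s)$ in $L_2(\Omega)^d$ by $\|\psi(s)\|_{L_\infty(\Omega)^d}\|\FI^\mu\phi(s)\|$, one gets $\|(B^\mu_\psi\phi)(t)\|\le C\|\FI^\mu\phi(t)\|+C\int_0^t\|\FI^\mu\phi(s)\|\,ds$; applying Cauchy--Schwarz in $s$ and then $(x+y)^2\le2x^2+2y^2$ yields $\|(B^\mu_\psi\phi)(t)\|^2\le C\|\FI^\mu\phi(t)\|^2+Ct\,\Q^\mu_2(\phi,t)$. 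Integrating over $(0,t)$, and bounding $\int_0^ts\,\Q^\mu_2(\phi,s)\,ds\le\tfrac12T^2\Q^\mu_2(\phi,t)$ by monotonicity, gives \eqref{eq: B1 estimate int}.

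For \eqref{eq: B3 estimate int}: since $\|(\Mult B^\mu_\psi\phi)(s)\|=s\|(B^\mu_\psi\phi)(s)\|\le t\|(B^\mu_\psi\phi)(s)\|$ for $0\le s\le t$, one has $\Q^0(\Mult B^\mu_\psi\phi,t)\le t^2\Q^0(B^\mu_\psi\phi,t)$; and since $\|(\FI^1B^\mu_\psi\phi)(s)\|^2\le s\,\Q^0(B^\mu_\psi\phi,s)$ by Cauchy--Schwarz, integrating (again using monotonicity) gives $\Q^0(\FI^1B^\mu_\psi\phi,t)\le\tfrac12t^2\Q^0(B^\mu_\psi\phi,t)$. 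Adding these and invoking \eqref{eq: B1 estimate int} gives \eqref{eq: B3 estimate int}.

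For \eqref{eq: B5 estimate int}, the heart of the matter is the derivative of $\Mult B^\mu_\psi\phi$. Differentiating \eqref{eq: Yd B}, the two terms involving $\psi'$ cancel, so $(B^\mu_\psi\phi)'=\psi\,(\FI^\mu\phi)'$ and hence $(\Mult B^\mu_\psi\phi)'=B^\mu_\psi\phi+\psi\cdot\Mult\bigl((\FI^\mu\phi)'\bigr)$. To rewrite $\Mult\bigl((\FI^\mu\phi)'\bigr)=(\Mult\FI^\mu\phi)'-\FI^\mu\phi$ I would use the commutator \eqref{eq: commutator} to write $\Mult\FI^\mu\phi=\FI^\mu(\Mult\phi)+\mu\FI^{\mu+1}\phi$, differentiate using $(\FI^{\mu+1}\phi)'=\FI^\mu\phi$, and apply \eqref{eq: FI omega} (with order $\mu$) to $\Mult\phi$; since $(\Mult\phi)(0)=0$, the singular $\omega_\mu$ term vanishes, leaving $\Mult\bigl((\FI^\mu\phi)'\bigr)=\FI^\mu\bigl((\Mult\phi)'\bigr)+(\mu-1)\FI^\mu\phi$. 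Combining these facts,
\[
(\Mult B^\mu_\psi\phi)'=B^\mu_\psi\phi+\psi\,\FI^\mu\bigl((\Mult\phi)'\bigr)+(\mu-1)\,\psi\,\FI^\mu\phi;
\]
squaring, using $(x+y+z)^2\le3(x^2+y^2+z^2)$, integrating over $(0,t)$, estimating the first term by \eqref{eq: B1 estimate int}, and pulling $\psi$ out in the $L_\infty$-norm from the other two yields $\Q^0\bigl((\Mult B^\mu_\psi\phi)',t\bigr)\le C\,\Q^\mu_2(\phi,t)+C\,\Q^\mu_2\bigl((\Mult\phi)',t\bigr)$, which already implies \eqref{eq: B5 estimate int} (the term $\Q^\mu_2(\Mult\phi,t)$ there is not needed). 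I expect this last identity to be the only step requiring care: one must justify the differentiations when $\phi$ lies merely in $W^1_1\bigl((0,T);L_2(\Omega)\bigr)$ and check that the endpoint term produced by $\FI^\mu$ really cancels. If the regularity is inconvenient one first derives the identities for smooth $\phi$ and then extends them by the density argument used in the proof of \cref{lem: D}; and in any event \eqref{eq: B5 estimate int} is to be understood in $[0,\infty]$, so there is nothing to prove when its right-hand side is infinite.
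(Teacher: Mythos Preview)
Your argument for \eqref{eq: B1 estimate int} and \eqref{eq: B3 estimate int} is essentially the paper's: bound $\|(B^\mu_\psi\phi)(t)\|$ pointwise using the $L_\infty$-bounds on $\psi$, $\psi'$ and integrate, then observe that $\Mult$ and $\FI^1$ introduce at most a factor~$t^2$.

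For \eqref{eq: B5 estimate int} you take a genuinely different route. The paper first rewrites $\Mult B^\mu_\psi\phi=\psi\bigl(\FI^\mu\Mult\phi+\mu\FI^{\mu+1}\phi\bigr)-\Mult\FI^1(\psi'\FI^\mu\phi)$ via the commutator~\eqref{eq: commutator}, differentiates that expression, and then estimates each of the resulting six terms separately; the term $\psi'\FI^\mu\Mult\phi$ survives and produces the contribution $\Q^\mu_2(\Mult\phi,t)$ on the right-hand side. You instead differentiate $B^\mu_\psi\phi$ first and notice the cancellation $(B^\mu_\psi\phi)'=\psi\,(\FI^\mu\phi)'$, so that $(\Mult B^\mu_\psi\phi)'=B^\mu_\psi\phi+\psi\,\Mult\bigl((\FI^\mu\phi)'\bigr)$; combining $(\Mult\FI^\mu\phi)'=\FI^\mu\phi+\Mult(\FI^\mu\phi)'$ with the commutator and with \eqref{eq: FI omega} (applied to $\Mult\phi$, which vanishes at $t=0$) then yields your clean identity
\[
(\Mult B^\mu_\psi\phi)'=B^\mu_\psi\phi+\psi\,\FI^\mu\bigl((\Mult\phi)'\bigr)+(\mu-1)\,\psi\,\FI^\mu\phi.
\]
This is equivalent to the paper's expression (the paper's $\psi'$-terms do in fact cancel pairwise, since $\psi'(\FI^\mu\Mult\phi+\mu\FI^{\mu+1}\phi)=\Mult(\psi'\FI^\mu\phi)$ by~\eqref{eq: commutator}), but your formulation makes the cancellation explicit and leads to a bound involving only $\Q^\mu_2\bigl((\Mult\phi)',t\bigr)$ and $\Q^\mu_2(\phi,t)$, which is slightly sharper than~\eqref{eq: B5 estimate int} as stated. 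Your regularity caveat is harmless but unnecessary: for $\phi\in W^1_1\bigl((0,T);L_2(\Omega)\bigr)$ and $\mu>0$ one has $\FI^\mu\phi\in W^1_1$ directly from~\eqref{eq: FI omega}, so all differentiations are justified without a density step.
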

\begin{proof}
The assumption on~$\psi$ implies that
\[
\|(B^\mu_\psi\phi)(t)\|^2\le C\|(\FI^\mu\phi)(t)\|^2
	+C\int_0^t\|(\FI^\mu\phi)(s)\|^2\,ds,
\]
and \eqref{eq: B1 estimate int} follows after integrating in time.  By the
Cauchy--Schwarz inequality,
\[
\|(\Mult B_\psi^\mu\phi)(t)\|^2+\|(\FI^1 B_\psi^\mu\phi)(t)\|^2
	\le t^2\|(B_\psi^\mu\phi)(t)\|^2+t\int_0^t\|(B_\psi^\mu\phi)(s)\|^2\,ds,
\]
and \eqref{eq: B3 estimate int} follows after integrating in time.  The
third identity in~\eqref{eq: commutator} implies that
\[
\Mult B^\mu_\psi\phi=\psi\bigl(\FI^\mu\Mult\phi+\mu\FI^{\mu+1}\phi\bigr)
	-\Mult\FI^1(\psi'\FI^\mu\phi)
\]
and therefore, differentiating with respect to~$t$,
\[
(\Mult B_\psi^\mu\phi)'=\psi'\bigl(\FI^\mu\Mult\phi+\mu\FI^{\mu+1}\phi\bigr)
	+\psi\bigl((\FI^\mu\Mult\phi)'+\mu\FI^\mu\phi\bigr)
    -(\FI^1+\Mult)(\psi'\FI^\mu\phi).
\]
Thus, noting that $(\FI^\mu\Mult\phi)'=\FI^\mu(\Mult\phi)'$
by~\eqref{eq: FI omega}, with
\[
\|\FI^{\mu+1}\phi(t)\|^2=\|\FI^1(\FI^\mu\phi)(t)\|^2
\le t\Q^\mu_2(\phi,t)
\]
and $\|\FI^1(\psi'\FI^\mu\phi)(t)\|^2\le Ct\Q^\mu_2(\phi,t)$, we have
\begin{align*}
\|(\Mult B_\psi^\mu\phi)'(t)\|^2&\le C\|\FI^\mu(\Mult\phi)(t)\|^2
	+C\|\FI^\mu(\Mult\phi)'(t)\|^2\\
	&\qquad{}+C\|(\FI^\mu\phi)(t)\|^2+Ct\Q^\mu_2(\phi,t),
\end{align*}
so \eqref{eq: B5 estimate int} follows after integrating in time.
\end{proof}

\section{The projected equation}\label{sec: projected}

Suppose that $X$ is a finite-dimensional subspace of~$H^1_0(\Omega)$, equipped
with the induced norm: $\|v\|_X=\|v\|_{H^1_0(\Omega)}$.
We define a bounded linear operator $K_X(t,s):X\to X$ in terms of~$K(t,s)$ 
in~\eqref{eq: K def} by
\[
\iprod{K_X(t,s)v,w}=\iprod{K(t,s)v,w}
	\quad\text{for $v$, $w\in X$ and $0\le s\le t\le T$,}
\]
and let $f_X(t)$ denote the $L_2$-projection onto~$X$ of~$f(t)$ 
from~\eqref{eq: Volterra eqn prelim}, that is,
\[
\iprod{f_X(t),w}=\iprod{f(t),w}\quad\text{for $w\in X$ and $0\le t\le T$.}
\]
In this way, we arrive at a finite dimensional reduction of the 
Volterra equation~\eqref{eq: Volterra eqn},
\begin{equation}\label{eq: Volterra eqn X}
u_X(t)+\int_0^tK_X(t,s)u_X(s)\,ds=f_X(t)\quad\text{for $0\le t\le T$.}
\end{equation}
In the next theorem, we outline a self-contained proof of existence and 
uniqueness under relaxed assumptions on the coefficients in the fractional 
PDE~\eqref{eq: FPDE}. Similar results for scalar-valued kernels are shown by 
Linz~\cite[\S3.4]{Linz1985}, Becker~\cite{Becker2011}, and 
Brunner~\cite{Brunner2017}.  

Henceforth, $C$ will denote a generic constant that may depend on the 
coefficients in~\eqref{eq: FPDE}, the spatial domain~$\Omega$, the time 
interval~$[0,T]$, the fractional exponent~$\alpha$, the parameter~$\eta$, and 
the integer~$m$ in~\eqref{eq: reg coeff}.  However, any dependence on the 
subspace~$X$ is indicated explicitly by writing~$C_X$.
We let $Y=C([0,T];X)$ with the norm $\|v\|_Y=\max_{0\le t\le T}\|v(t)\|_X$.

\begin{theorem}\label{thm: uX exist unique}
Assume that the coefficients in~\eqref{eq: FPDE} satisfy
\begin{gather*}
\kappa\in L_\infty(\Omega)^{d\times d},\quad
\vec F\in W^1_\infty\bigl((0,T);L_\infty(\Omega)^d\bigr),\quad
\vec G\in L_\infty\bigl((0,T);L_\infty(\Omega)^d\bigr),\\
a\in W^1_\infty\bigl((0,T);L_\infty(\Omega)\bigr),\quad
b\in L_\infty\bigl((0,T);L_\infty(\Omega)\bigr).
\end{gather*}
Assume, in addition, that the source term $g:(0,T]\to L_2(\Omega)$ is a 
measurable function satisfying
\begin{equation}\label{eq: g bound}
\|g(t)\|\le Mt^{\eta-1}\quad\text{for $0<t\le T$,}
\end{equation}
where $M$~and $\eta$ are positive constants, and that the initial data 
$u_0\in L_2(\Omega)$. Then, the weakly-singular Volterra integral 
equation~\eqref{eq: Volterra eqn X} has a unique solution~$u_X\in Y$, and 
moreover $\|u_X\|_Y\le C_X\|f_X\|_Y\le C_X(\|u_0\|+M)$.
\end{theorem}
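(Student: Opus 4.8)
The plan is to prove existence and uniqueness by a standard contraction/fixed-point argument for weakly-singular Volterra equations, adapted to the operator-valued setting, and then extract the quantitative bound. First I would record the mapping properties of the kernel: from the definition \eqref{eq: K def} together with the regularity assumptions on the coefficients and the fact that $\omega_\alpha\in L_1(0,T)$, the operator $K_X(t,s):X\to X$ satisfies an estimate of the form $\|K_X(t,s)\|_{X\to X}\le C_X\,\omega_\alpha(t-s)$ for $0\le s<t\le T$ (the $\omega_\alpha(t-s)K_1(t)$ term contributes the singularity; the $K_2(s)$ term and the $\int_s^t\omega_\alpha(z-s)K_1'(z)\,dz$ term are bounded, the latter by $C\int_0^{t-s}\omega_\alpha(r)\,dr\le C$). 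The constant is $X$-dependent because passing from the $H^1_0$--$H^{-1}$ duality bound on $K(t,s)$ to an $X\to X$ bound costs a factor involving the equivalence constant between the $L_2$ and $H^1_0$ norms on the finite-dimensional space $X$; similarly the $L_2$-projection $f_X$ and the mismatch between $\|\cdot\|=\|\cdot\|_{L_2}$ and $\|\cdot\|_X=\|\cdot\|_{H^1_0}$ on $X$ introduce $C_X$ factors. This is exactly why the theorem only claims an $X$-dependent bound.

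Next I would set up the iteration. Define the Volterra operator $(\mathcal{V}v)(t)=f_X(t)-\int_0^tK_X(t,s)v(s)\,ds$ on $Y=C([0,T];X)$; one checks $\mathcal{V}$ maps $Y$ into itself (continuity in $t$ uses dominated convergence against the integrable majorant $C_X\omega_\alpha(t-s)\|v\|_Y$, plus continuity of $t\mapsto K_1(t)$, etc.). For two elements $v,w\in Y$,
\[
\|(\mathcal{V}v)(t)-(\mathcal{V}w)(t)\|_X
\le C_X\int_0^t\omega_\alpha(t-s)\|v(s)-w(s)\|_X\,ds.
\]
Iterating $n$ times and using $\omega_\alpha*\omega_\alpha*\cdots*\omega_\alpha=\omega_{n\alpha}$ ($n$ factors), one gets
\[
\|(\mathcal{V}^nv)(t)-(\mathcal{V}^nw)(t)\|_X
\le C_X^n\int_0^t\omega_{n\alpha}(t-s)\|v(s)-w(s)\|_X\,ds
\le \frac{(C_X\,\Gamma(\alpha))^n\,T^{n\alpha}}{\Gamma(1+n\alpha)}\,\|v-w\|_Y,
\]
and since $\Gamma(1+n\alpha)$ grows faster than any geometric factor, $\mathcal{V}^n$ is a strict contraction on $Y$ for $n$ large. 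By the Banach fixed-point theorem (in the form applying to a power of the map), $\mathcal{V}$ has a unique fixed point $u_X\in Y$, which is precisely the unique solution of \eqref{eq: Volterra eqn X}. This simultaneously gives existence and uniqueness.

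For the quantitative bound I would apply the scalar fractional Gronwall inequality, \cref{lem: Gronwall}, to $\mathsf{q}(t)=\|u_X(t)\|_X$. From the fixed-point identity,
\[
\|u_X(t)\|_X\le \|f_X\|_Y + C_X\int_0^t\omega_\alpha(t-s)\|u_X(s)\|_X\,ds,
\]
so \cref{lem: Gronwall} with $\beta=\alpha$, $\mathsf{a}(t)=\|f_X\|_Y$ and $\mathsf{b}(t)=C_X$ yields $\|u_X(t)\|_X\le\|f_X\|_Y\,E_\alpha(C_XT^\alpha)$, i.e. $\|u_X\|_Y\le C_X\|f_X\|_Y$ after absorbing $E_\alpha(C_XT^\alpha)$ into $C_X$. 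Finally, $\|f_X(t)\|\le\|f(t)\|\le\|u_0\|+\int_0^t\|g(s)\|\,ds\le\|u_0\|+M\int_0^T s^{\eta-1}\,ds=\|u_0\|+MT^\eta/\eta$ by the hypothesis \eqref{eq: g bound} with $\eta>0$, and translating the $L_2$ bound on $f_X$ to the $X=H^1_0$ norm on the finite-dimensional space $X$ costs another $C_X$; hence $\|f_X\|_Y\le C_X(\|u_0\|+M)$ and the claimed chain of inequalities follows.

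The main obstacle is bookkeeping the constants rather than any deep analytic difficulty: one must be careful that the singularity of $K(t,s)$ is \emph{exactly} of the integrable weakly-singular type $\omega_\alpha(t-s)$ (so that the convolution-power argument closes via $\Gamma(1+n\alpha)\to\infty$), and one must verify that each place where the ambient $L_2$ structure (the inner product $\iprod{\cdot,\cdot}$, the duality $H^1_0$--$H^{-1}$, the $L_2$-projection $f_X$) is replaced by the $H^1_0$ norm on $X$ contributes only an $X$-dependent but finite factor—this is automatic by equivalence of norms on a finite-dimensional space but should be stated cleanly so the uniform-in-$X$ estimates of \cref{sec: weak} are visibly a separate, harder matter. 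No uniformity in $\dim X$ is asserted or needed here.
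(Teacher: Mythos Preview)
Your proposal is correct and follows essentially the same route as the paper: both establish the kernel bound $\|K_X(t,s)\|_{X\to X}\le\gamma_X\,\omega_\alpha(t-s)$, iterate using the semigroup identity $\omega_\alpha^{*n}=\omega_{n\alpha}$, and conclude via the resulting Mittag--Leffler bound. The only cosmetic differences are that the paper packages the iteration as an explicit Neumann series for the resolvent~$\mathcal{R}_X=\sum_{n\ge1}(-1)^n\mathcal{K}_X^n$ (reading the a~priori estimate directly from $\|\mathcal{R}_X\|\le E_\alpha(\gamma_XT^\alpha)-1$) rather than invoking the Banach fixed-point theorem for a power of the map and then Gronwall, and the paper records the decomposition $K(t,s)=\omega_\alpha(t-s)G(t,s)+H(t,s)$ with $G,H$ continuous on the closed triangle to make the mapping $\mathcal{K}_X:Y\to Y$ transparent.
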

\begin{proof}
Our assumptions on $u_0$~and $g$ ensure that $f_X\in Y$. The 
kernel~\eqref{eq: K def} has the form
\[
K(t,s)=\omega_\alpha(t-s)G(t,s)+H(t,s),
\]
where 
\[
G(t,s)=K_1(t)-\Gamma(\alpha)(t-s)\int_0^1\omega_\alpha(y)
	K_1'\bigl(s+(t-s)y\bigr)\,dy
\]
and $H(t,s)=K_2(s)$ for $0\le s\le t\le T$.  Our assumptions on the 
coefficients of the fractional PDE~\eqref{eq: FPDE}
ensure that $G$~and $H$ are continuous mappings from the closed 
triangle~$\triangle=\{\,(t,s):0\le s\le t\le T\,\}$ into the space of bounded 
linear operators $H^1_0(\Omega)\to H^{-1}(\Omega)$. Likewise, 
\[
K_X(t,s)=\omega_\alpha(t-s)G_X(t,s)+H_X(t,s),
\]
where 
$G_X(t,s):X\to X$ and $H_X(t,s):X\to X$ are defined by
\[
\iprod{G_X(t,s)v,w}=\iprod{G(t,s)v,w}\quad\text{and}\quad
\iprod{H_X(t,s)v,w}=\iprod{H(t,s)v,w}
\]
for $(t,s)\in\triangle$~and $v$, $w\in X$. Since $X$ is finite dimensional, 
$G_X$~and $H_X$ are continuous functions from $\triangle$ into the space of 
bounded linear operators~$X\to X$.  Hence, there is a positive 
constant~$\gamma_X$ such that
\[
\|K_X(t,s)v\|_X\le\gamma_X\omega_\alpha(t-s)\|v\|_X
	\quad\text{for $(t,s)\in\triangle$~and $v\in X$,}
\]
so we can define the Volterra operator~$\mathcal{K}_X:Y\to Y$ by
\[
\mathcal{K}_Xv(t)=\int_0^tK_X(t,s)v(s)\,ds
	\quad\text{for $0\le t\le T$ and $v\in Y$.}
\]
We see that $\|\mathcal{K}_Xv\|_Y\le\gamma_X\omega_{1+\alpha}(T)\|v\|_Y$.  In
fact, using the semigroup property,
\[
\int_0^t\omega_\alpha(t-s)\omega_\beta(s)\,ds=\omega_{\alpha+\beta}(t),
\]
we obtain the following estimate for the operator norm of the $n$th power
of~$\mathcal{K}_X$,
\[
\|\mathcal{K}_X^n\|_{Y\to Y}\le\gamma_X^n\max_{0\le t\le T}
	\int_0^t\omega_{n\alpha}(t-s)\,ds=\gamma_X^n\omega_{1+n\alpha}(T)
	\quad\text{for $n\ge1$.}
\]
It follows that the 
sum~$\mathcal{R}_X=\sum_{n=1}^\infty(-1)^n\mathcal{K}_X^n$ defines a bounded 
linear operator with
\[
\|\mathcal{R}_X\|_{Y\to Y}
	\le\sum_{n=1}^\infty\omega_{1+n\alpha}(T)\gamma_X^n
	=E_\alpha(\gamma_XT^\alpha)-1.
\]
This operator is the resolvent for~$\mathcal{K}_X$, that is,
\[
u_X+\mathcal{K}_Xu_X=f_X\quad\text{if and only if}\quad
u_X=f_X-\mathcal{R}_Xf_X,
\]
implying the existence and uniqueness of~$u_X\in Y$, as well as the 
\emph{a priori} estimate claimed in the theorem.
\end{proof}

For a scalar, weakly-singular, second-kind Volterra equation,
it is known that if $f_X$ admits an expansion in powers of $t$~and $t^\alpha$, 
then so does the solution~$u_X$; see Lubich~\cite[Corollary~3]{Lubich1983}, and 
also Brunner, Pedas and Vainikko~\cite[Theorem~2.1]{BrunnerEtAl1999} (with 
$\nu=1-\alpha$).  To outline a proof that a similar result holds for 
\emph{systems} of Volterra equations, let 
$C^m_\alpha=C^m_\alpha\bigl([0,T];X\bigr)$ denote the space of continuous 
functions $v:[0,T]\to X$ that are $C^m$ on the half-open interval~$(0,T]$ and 
for which the seminorm
\[
|v|_{j,\alpha}=\sup_{0<t\le T}t^{j-\alpha}\|v^{(j)}(t)\|_X
\quad\text{is finite for~$1\le j\le m$.}
\]
We make $C^m_\alpha$ into a Banach space by defining the obvious norm:
\[
\|v\|_{m,\alpha}=\|v\|_Y+\sum_{j=1}^m|v|_{j,\alpha}.
\]

\begin{theorem}\label{thm: uX reg}
Let $m\ge1$, and strengthen the assumptions \eqref{eq: reg coeff} by requiring
\[
\vec F, \vec G\in C^{m+1}\bigl([0,T];W^1_\infty(\Omega)^d\bigr)
\quad\text{and}\quad
a, b\in C^m\bigl([0,T];L_\infty(\Omega)\bigr).
\]
If $u_0\in L_2(\Omega)$ and $g:(0,T]\to X$ is $C^m$ with
$\|g^{(i-1)}(t)\|\le Mt^{\alpha-i}$ for $1\le i\le m$, then 
$u_X\in C^m_\alpha$ and $\|u_X\|_{m,\alpha}\le C_X\|f_X\|_{m,\alpha}
\le C_X(\|u_0\|+M)$.
\end{theorem}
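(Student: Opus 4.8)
The plan is to promote the resolvent argument from the proof of \cref{thm: uX exist unique} from the space $Y$ to the space $C^m_\alpha$. First I would record that the right-hand side already has the required regularity. Writing $P_X$ for the $L_2$-orthogonal projection onto $X$, so that $f_X=P_Xf$, and using that $g$ is $X$-valued, we have $f_X(t)=P_Xu_0+\int_0^tg(s)\,ds$ and hence $f_X^{(i)}(t)=g^{(i-1)}(t)$ for $1\le i\le m$. Combined with the bounds $\|g^{(i-1)}(t)\|\le Mt^{\alpha-i}$ and the equivalence (with an $X$-dependent constant) of the $L_2$- and $H^1_0$-norms on the finite-dimensional space $X$, this yields $f_X\in C^m_\alpha$ with $\|f_X\|_{m,\alpha}\le C_X(\|u_0\|+M)$, which is the second asserted inequality. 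It therefore suffices to show $u_X\in C^m_\alpha$ together with $\|u_X\|_{m,\alpha}\le C_X\|f_X\|_{m,\alpha}$.

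The key ingredient is a mapping property on $C^m_\alpha$ of model weakly-singular Volterra operators: if $\beta\ge\alpha-1$ and $\widetilde G\in C^m\bigl(\triangle;\mathcal L(X)\bigr)$, then $v\mapsto\int_0^t(t-s)^\beta\widetilde G(t,s)v(s)\,ds$ maps $C^m_\alpha$ boundedly into itself, with operator norm at most $C(m,\alpha,\beta)\,T^{\beta+1-\alpha}\|\widetilde G\|_{C^m(\triangle)}$. I would prove this by the substitution $s=t\sigma$, which rewrites the operator as
\[
t^{\beta+1}\int_0^1(1-\sigma)^\beta\,\widetilde G(t,t\sigma)\,v(t\sigma)\,d\sigma,
\]
and then differentiating $j$ times ($1\le j\le m$) by the Leibniz rule, using $\partial_t^k v(t\sigma)=\sigma^k v^{(k)}(t\sigma)$ and $\|v^{(k)}(t\sigma)\|\le|v|_{k,\alpha}(t\sigma)^{\alpha-k}$. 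Every resulting term is then bounded by a constant times $t^{\alpha-j}$: the worst contribution is of order $t^{\beta+1-j}$ with $\beta+1-j\ge\alpha-j$, and the contributions carrying derivatives of $v$ involve the finite Beta integrals $\int_0^1(1-\sigma)^\beta\sigma^\alpha\,d\sigma$ (finite since $\alpha>0$). No term requires more than $m$ derivatives of $v$, because $v^{(m)}(t\sigma)$ appears only in the single term in which all $m$ of the derivatives fall on the factor $v(t\sigma)$ and none on $t^{\beta+1}$, and there $v^{(m)}$ is supplied by the hypothesis $v\in C^m_\alpha$.

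Granting this, I would run the Neumann-series argument of \cref{thm: uX exist unique} inside $C^m_\alpha$. Under the strengthened hypotheses, the coefficients $G_X$, $H_X$ in the decomposition $K_X(t,s)=\omega_\alpha(t-s)G_X(t,s)+H_X(t,s)$ lie in $C^m\bigl(\triangle;\mathcal L(X)\bigr)$, so by the estimate above (with $\beta=\alpha-1$ and $\beta=0$) the Volterra operator $\mathcal K_X$ maps $C^m_\alpha$ boundedly into itself. Following the scalar analysis of Lubich~\cite[Cor.~3]{Lubich1983} and Brunner, Pedas and Vainikko~\cite[Thm.~2.1]{BrunnerEtAl1999}, which carries over verbatim to the operator-valued setting because $\dim X<\infty$, the $n$th iterated kernel $K_X^{(n)}(t,s)$ is a finite sum of terms $(t-s)^{\ell(\alpha-1)+n-1}\widetilde G_{\ell,n}(t,s)$ with $\widetilde G_{\ell,n}\in C^m(\triangle;\mathcal L(X))$; crucially, every exponent is $\ge\alpha-1$ and the $C^m$-norms of the $\widetilde G_{\ell,n}$ carry a factor $1/\Gamma(n\alpha)$. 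Applying the mapping estimate termwise then shows that $\sum_{n\ge1}\|\mathcal K_X^n\|_{\mathcal L(C^m_\alpha)}$ converges (dominated by an $E_\alpha$-type series in $\gamma_X$, exactly as in \cref{thm: uX exist unique}), so $\mathcal R_X=\sum_{n\ge1}(-1)^n\mathcal K_X^n$ is a bounded operator on $C^m_\alpha$. Consequently $\tilde u:=f_X-\mathcal R_Xf_X\in C^m_\alpha\subseteq Y$ solves \eqref{eq: Volterra eqn X}, and by the uniqueness already established in $Y$ we get $u_X=\tilde u\in C^m_\alpha$ with $\|u_X\|_{m,\alpha}\le\bigl(1+\sum_{n\ge1}\|\mathcal K_X^n\|_{\mathcal L(C^m_\alpha)}\bigr)\|f_X\|_{m,\alpha}\le C_X\|f_X\|_{m,\alpha}$.

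I expect the main obstacle to be the structural description of the iterated kernels $K_X^{(n)}$ — that all exponents appearing are $\ge\alpha-1$ and that the $C^m$-norms of the coefficients decay like $\gamma_X^n/\Gamma(n\alpha)$ — since this is what makes both the $C^m_\alpha$-mapping estimate applicable termwise and the series summable. This is precisely the content of the cited results of Lubich and of Brunner–Pedas–Vainikko in the scalar case, obtained by tracking exponents through the recursion $\rho\mapsto\{\alpha+\rho,\,\rho+1\}$ and bounding the resulting iterated Beta integrals; the passage to operator-valued kernels on the finite-dimensional space $X$ introduces nothing essential, as all scalar products become compositions in $\mathcal L(X)$ and all absolute values become operator norms. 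A secondary, routine point is to verify that the strengthened smoothness assumptions on $\vec F$, $\vec G$, $a$, $b$ are exactly what is needed to place $G_X$ and $H_X$ in $C^m\bigl(\triangle;\mathcal L(X)\bigr)$.
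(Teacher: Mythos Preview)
Your argument is essentially correct, but it follows a genuinely different route from the paper.  The paper does not redo the Neumann-series construction in~$C^m_\alpha$; instead it verifies kernel estimates of the form
\[
\bigl\|\partial_t^k(\partial_t+\partial_s)^jK_X(t,s)v\bigr\|_X
	\le C_X (t-s)^{\alpha-1-k}\|v\|_X
	\qquad(j+k\le m),
\]
cites Vainikko~\cite[Theorem~6.1]{Vainikko2007} to conclude that $\mathcal K_X:C^m_\alpha\to C^m_\alpha$ is \emph{compact}, and then appeals to the Fredholm alternative: since the homogeneous equation has only the trivial solution in~$Y$ (by \cref{thm: uX exist unique}), the inhomogeneous equation is well-posed in~$C^m_\alpha$ as well.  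This is substantially shorter than your plan and sidesteps entirely the bookkeeping of iterated kernels.

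What your approach buys is a constructive resolvent with an explicit $E_\alpha$-type bound on~$\|\mathcal R_X\|_{\mathcal L(C^m_\alpha)}$, at the cost of having to track (i)~the exponent recursion $\rho\mapsto\{\rho+\alpha,\rho+1\}$ through the $2^n$ terms of $K_X^{(n)}$, (ii)~the Beta-integral decay $\sim1/\Gamma(n\alpha)$, and (iii)~the fact that the constant $C(m,\alpha,\beta)$ in your mapping estimate grows only polynomially in~$\beta$ (and hence in~$n$), so that the series is still dominated by a Mittag--Leffler sum.  The paper's approach avoids all of this by trading the explicit norm bound for an abstract existence statement; the price is a dependence on a compactness result that is cited rather than proved.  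Both routes use the same decomposition $K_X=\omega_\alpha G_X+H_X$ and the same observation that $f_X\in C^m_\alpha$ with $\|f_X\|_{m,\alpha}\le C_X(\|u_0\|+M)$.
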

\begin{proof}
Our assumptions on $u_0$~and $g$ imply that $f_X\in C^m_\alpha$.  
Using the substitution~$z=s+(t-s)y$ in~\eqref{eq: K def}, we find that 
if $j+k\le m$ and $0\le s<t\le T$, then
\[
\bigl\|\partial_t^k(\partial_t+\partial_s)^jK(t,s)v\bigr\|_{H^{-1}(\Omega)}
	\le C_X(t-s)^{\alpha-1-k}\|v\|_{H^1_0(\Omega)}
	\quad\text{for $v\in H^1_0(\Omega)$,}
\]
and, since $X$ is finite dimensional,
\[
\bigl\|\partial_t^k(\partial_t+\partial_s)^jK_X(t,s)v\bigr\|_X
	\le C_X (t-s)^{\alpha-1-k}\|v\|_X
	\quad\text{for $v\in X$.}
\]
Hence, the Volterra operator~$\mathcal{K}_X:C^m_\alpha\to C^m_\alpha$ 
is compact \cite[Theorem~6.1]{Vainikko2007}. 
\cref{thm: uX exist unique} implies that the homogeneous equation, 
$u_X+\mathcal{K}_Xu_X=0$, has only the trivial
solution~$u_X=0$, and therefore the inhomogeneous 
equation~$u_X+\mathcal{K}_Xu_X=f_X$ 
is well-posed not only in~$Y$ but also in~$C^m_\alpha$.
\end{proof}

Our goal in the remainder of this section is to obtain bounds for 
$\|u_X(t)\|$ and $\|\nabla u_X(t)\|$ with constants that are independent 
of~$X$.  Our proof relies on a sequence of technical lemmas. To simplify our 
estimates, we rescale the time variable, if necessary, so that the minimal 
eigenvalue of~$\kappa$ is bounded below by unity:
\begin{equation}\label{eq: lambda min}
\lambda_{\mathrm{min}}\bigl(\kappa(x)\bigr)\ge1\quad\text{for $x\in\Omega$.}
\end{equation}
In this way, $\iprod{\kappa\nabla v,\nabla v}\ge\|\nabla v\|^2$ 
for~$v\in H^1_0(\Omega)$, and we see from~\eqref{eq: Q1 Plancherel} that for
(real-valued) $\phi\in C\bigl([0,T];H^1_0(\Omega)\bigr)$,
\begin{align*}
\int_0^t\iprod{\kappa\FI^\mu\nabla\phi,\nabla\phi}\,ds
	&=\frac{\cos(\pi\mu/2)}{\pi}\int_0^\infty y^{-\mu}
	\iprod{\kappa\nabla\hat\phi(iy),\overline{\nabla\hat\phi(iy)}}\,dy\\
	&\ge\frac{\cos(\pi\mu/2)}{\pi}\int_0^\infty y^{-\mu}
		\|\nabla\hat\phi(iy)\|^2\,dy,
\end{align*}
so
\begin{equation}\label{eq: kappa Q1}
\int_0^t\iprod{\kappa\FI^\mu\nabla\phi,\nabla\phi}\,ds
\ge\int_0^t\iprod{\FI^\mu\nabla\phi,\nabla\phi}\,ds
	=\Q^\mu_1(\nabla\phi,t).
\end{equation}

Since \eqref{eq: Volterra eqn prelim} is equivalent to~\eqref{eq: Volterra eqn},
if $v\in X$ then
\begin{align*}
\biggiprod{\int_0^t&K_X(t,s)u_X(s)\,ds,v}\\
	&=\int_0^t\bigiprod{K_1(s)\partial_s^{1-\alpha}u_X,v}\,ds
	+\int_0^t\bigiprod{K_2(s)u_X(s),v}\,ds\\
	&=\bigiprod{\kappa(\FI^\alpha\nabla u_X)(t),\nabla v}
	-\bigiprod{(B_1u_X)(t),\nabla v}+\bigiprod{(B_2u_X)(t),v},
\end{align*}
where
\begin{equation}\label{eq: vec B1 B2}
\begin{aligned}
\vec B_1\phi(t)&=\int_0^t\Bigl(\vec F(s)\partial_s^{1-\alpha}\phi(s)
    +\vec G(s)\phi(s)\Bigr)\,ds,\\
B_2\phi(t)&=\int_0^t\Bigl(a(s)\partial_s^{1-\alpha}\phi(s)
    +b(s)\phi(s)\Bigr)\,ds.
\end{aligned}
\end{equation}
Assuming $\phi\in C^1_\alpha([0,T];X\bigr)$, we may integrate by parts and use 
the notation~\eqref{eq: Yd B} to write
\begin{equation}\label{eq: B1 B2}
\vec B_1=B_{\vec F}^\alpha+B_{\vec G}^1
\quad\text{and}\quad
B_2=B_a^\alpha+B_b^1.
\end{equation}
Thus, the solution of~\eqref{eq: Volterra eqn X} satisfies
\begin{multline}\label{eq: weak uX}
\iprod{u_X(t),v}+\iprod{\kappa\nabla\FI^\alpha u_X(t),\nabla v}
    -\bigiprod{(\vec B_1u_X)(t),\nabla v}+\bigiprod{(B_2u_X)(t),v}\\
    =\iprod{f_X(t),v}\quad\text{for $v\in X$,}
\end{multline}
which yields the following estimates (with $C$ independent of~$X$).

\begin{lemma}\label{lem: uX estimate}
For $0\le t\le T$, the solution~$u_X$ of the Volterra equation~\eqref{eq: 
Volterra eqn X} satisfies the \emph{a priori} estimates
\[
\Q_1^\alpha(u_X,t)+\Q_2^\alpha(\nabla u_X,t)\le Ct^\alpha\Q^0(f_X,t)
\]
and
\[
\Q^0(u_X,t)+\Q_1^\alpha(\nabla u_X,t)\le C\Q^0(f_X,t).
\]
\end{lemma}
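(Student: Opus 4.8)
The plan is to test the weak equation~\eqref{eq: weak uX} against $v=u_X(t)$, integrate over $s\in[0,t]$, and exploit the coercivity~\eqref{eq: kappa Q1} of the leading diffusion term together with the estimates~\eqref{eq: B1 estimate int} for $B^\mu_\psi$ to absorb the advection and reaction contributions. Specifically, choosing $v=u_X(s)$ in~\eqref{eq: weak uX} and integrating in $s$ from $0$ to $t$ gives
\[
\Q^0(u_X,t)+\int_0^t\iprod{\kappa\nabla\FI^\alpha u_X,\nabla u_X}\,ds
	=\int_0^t\iprod{f_X,u_X}\,ds
	+\int_0^t\bigiprod{\vec B_1u_X,\nabla u_X}\,ds
	-\int_0^t\iprod{B_2u_X,u_X}\,ds,
\]
and by~\eqref{eq: kappa Q1} the second term on the left is bounded below by $\Q^\alpha_1(\nabla u_X,t)$. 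For the right-hand side, the $f_X$ term is handled by Cauchy--Schwarz and Young's inequality, $\int_0^t\iprod{f_X,u_X}\,ds\le\tfrac14\epsilon^{-1}\Q^0(f_X,t)+\epsilon\,\Q^0(u_X,t)$; the $\vec B_1$ and $B_2$ terms are split via~\eqref{eq: B1 B2} into the pieces $B^\alpha_{\vec F}$, $B^1_{\vec G}$, $B^\alpha_a$, $B^1_b$, each of which is estimated in $\Q^0$-norm using~\eqref{eq: B1 estimate int} (with $\mu=\alpha$ or $\mu=1$), then paired against $\nabla u_X$ or $u_X$ by Young's inequality. This yields, after choosing $\epsilon$ small to absorb the $\Q^0(u_X,t)$ and $\Q^\alpha_1(\nabla u_X,t)$ terms on the left,
\[
\Q^0(u_X,t)+\Q^\alpha_1(\nabla u_X,t)\le C\Q^0(f_X,t)
	+C\,\Q^\alpha_2(u_X,t)+C\,\Q^1_2(\nabla u_X,t).
\]

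The remaining task is to control the two ``lower-order'' quadratic terms on the right. For $\Q^1_2(\nabla u_X,t)$ we apply \cref{lem: E} with $\mu=\alpha$, $\nu=1$ to get $\Q^1_2(\nabla u_X,t)\le 2t^{2(1-\alpha)}\Q^\alpha_2(\nabla u_X,t)$, and then \cref{lem: D} (or~\eqref{eq: B}) converts $\Q^\alpha_2(\nabla u_X,s)$ into a fractional integral of $\Q^\alpha_1(\nabla u_X,\cdot)$; for $\Q^\alpha_2(u_X,t)$ we similarly use \cref{lem: D} to bound it by $2\int_0^t\omega_\alpha(t-s)\Q^\alpha_1(u_X,s)\,ds$. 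At this point we have a coupled inequality for the pair $\bigl(\Q^0(u_X,t)+\Q^\alpha_1(\nabla u_X,t)\bigr)$ and $\bigl(\Q^\alpha_1(u_X,t)+\Q^\alpha_2(\nabla u_X,t)\bigr)$; to close it we need a second inequality, obtained by running the \emph{same} energy argument but testing against $\partial_t^{1-\alpha}u_X$ in place of $u_X$ — equivalently, pairing~\eqref{eq: weak uX} with $v=\FI^\alpha(\text{something})$ or reusing~\eqref{eq: kappa Q1} with a different choice — to produce $\Q^\alpha_1(u_X,t)+\Q^\alpha_2(\nabla u_X,t)\le Ct^\alpha\Q^0(f_X,t)+\text{(absorbable terms)}$. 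Adding a suitable multiple of the two and invoking the fractional Gronwall inequality (\cref{lem: Gronwall}) with $\beta=\alpha$, applied to $\mathsf q(t)=\Q^\alpha_1(u_X,t)+\Q^\alpha_2(\nabla u_X,t)$ and with $\mathsf a(t)=Ct^\alpha\Q^0(f_X,t)$, then eliminates the integral term and gives the first claimed estimate; substituting that back into the first displayed inequality yields the second.

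The main obstacle I anticipate is getting the constants in the Young-inequality splittings \emph{uniform in $X$} while still leaving enough room to absorb \emph{both} $\Q^0(u_X,t)$ and $\Q^\alpha_1(\nabla u_X,t)$ on the left — the $B^1_{\vec G}$ and $B^1_b$ terms involve $\FI^1 u_X$, whose $\Q_2^1$-norm must be traded down (via \cref{lem: E} and \cref{lem: D}) to $\Q^\alpha_1(\nabla u_X)$ and $\Q^\alpha_1(u_X)$ at the cost of factors $t^{2(1-\alpha)}$, and keeping track of which powers of $t$ multiply which term is delicate; moreover the whole scheme requires the two energy identities to be \emph{compatible}, in the sense that the terms each one fails to control are precisely the terms the other one provides, so that the combined inequality is genuinely of Gronwall type in a single scalar quantity. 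Verifying that compatibility — essentially that the ``bad'' cross terms cancel or telescope after adding the two identities with correctly chosen weights — is the crux, and is exactly the place where the inequalities~\eqref{eq: A}, \eqref{eq: AC} of \cref{lem: alpha dep} (which trade $\int_0^t\iprod{\phi,\FI^\alpha\psi}\,ds$ against $\Q^\alpha_1$ and $\Q^0$ with an adjustable $\epsilon$) will do the decisive work.
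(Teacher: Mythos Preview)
There is a genuine gap in your argument for the second estimate. When you test~\eqref{eq: weak uX} with $v=u_X(t)$ and apply Young's inequality directly to the advection term, you obtain
\[
\int_0^t\bigiprod{\vec B_1u_X,\nabla u_X}\,ds
	\le\epsilon\,\Q^0(\nabla u_X,t)+C_\epsilon\,\Q^0(\vec B_1u_X,t),
\]
and the resulting $\epsilon\,\Q^0(\nabla u_X,t)$ \emph{cannot} be absorbed by the left-hand side, which only furnishes $\Q^\alpha_1(\nabla u_X,t)$; inequality~\eqref{eq: C} goes the wrong way here. Your displayed bound with $C\,\Q^1_2(\nabla u_X,t)$ on the right does not follow from the steps you describe---no $\FI^1\nabla u_X$ appears anywhere in $\vec B_1u_X$ or in the pairing. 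The paper's fix is to integrate by parts \emph{in space} first, writing $-\bigiprod{\vec B_1u_X,\nabla u_X}=\bigiprod{\nabla\cdot\vec B_1u_X,u_X}$, so that the term pairs with $u_X$ rather than $\nabla u_X$ and can be absorbed by $\Q^0(u_X,t)$. This step uses the $W^1_\infty$ regularity of $\vec F$,~$\vec G$ and produces, via~\eqref{eq: div B}, the lower-order contribution $\Q^\alpha_2(\nabla u_X,t)$, which is then handled by \cref{lem: D} and the Gronwall inequality.

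A secondary issue: your sketch inverts the order and is vague about the companion identity. The paper proves the \emph{first} estimate independently, by testing with $v=\FI^\alpha u_X(t)$ (not $\partial_t^{1-\alpha}u_X$). With that choice, $\iprod{\kappa\nabla\FI^\alpha u_X,\nabla v}=\iprod{\kappa\nabla v,\nabla v}\ge\|\nabla v\|^2$, so the $\vec B_1$ term pairs against $\nabla\FI^\alpha u_X$ and is controlled by $\tfrac12\|\nabla v\|^2+\tfrac12\|\vec B_1u_X\|^2$, with $\tfrac12\|\nabla v\|^2$ absorbed directly---no spatial integration by parts is needed for this half. The resulting inequality closes under Gronwall on $\mathsf q(t)=\Q^\alpha_1(u_X,t)+\Q^\alpha_2(\nabla u_X,t)$ alone. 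The two estimates are thus proved separately, each with its own choice of test function and its own application of \cref{lem: Gronwall}, rather than being coupled and added with weights as you propose.
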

\begin{proof}
From~\eqref{eq: weak uX},
\begin{multline*}
\bigiprod{u_X(t),v}+\iprod{\kappa\nabla\FI^\alpha u_X(t),\nabla v}
    \le \tfrac12\|\nabla v\|^2
	+\tfrac12\|\vec B_1 u_X(t)\|^2+\tfrac12\|B_2 u_X(t)\|^2\\
	+\tfrac12\|v\|^2+\bigiprod{f_X(t),v}.
\end{multline*}
Choosing $v=\FI^\alpha u_X(t)$ we have
$\iprod{\kappa\nabla\FI^\alpha u_X(t),\nabla v}=\iprod{\kappa\nabla v,\nabla v}
\ge\|\nabla v\|^2$ because of~\eqref{eq: lambda min}.  Thus, after
canceling the term~$\tfrac12\|\nabla v\|^2$ and integrating in time, we see that
\begin{multline}\label{eq: uX nabla uX}
\Q_1^\alpha(u_X,t)+\tfrac{1}{2}\Q_2^\alpha(\nabla u_X,t)
    \le\tfrac12\Q^0(\vec B_1u_X,t)+\tfrac12\Q^0(B_2u_X,t)
	+\tfrac12\Q^\alpha_2(u_X,t)\\
    +\int_0^t\bigiprod{f_X(s),\FI^\alpha u_X(s)}\,ds.
\end{multline}
Using the representation~\eqref{eq: B1 B2} and the achieved
estimate~\eqref{eq: B1 estimate int},
\begin{align*}
\Q^0(\vec B_1u_X,t)&\le2\Q^0(B_{\vec F}^\alpha u_X,t)
	+2\Q^0(B_{\vec G}^1 u_X,t)\\
    &\le C\Q^\alpha_2(u_X,t)+C\Q^1_2(u_X,t)\le C\Q^\alpha_2(u_X,t),
\end{align*}
where, in the final step, we used \cref{lem: E}.  In the same way,
\[
\Q^0(B_2 u_X,t)\le C\Q^\alpha_2(u_X,t). 
\]
Using \eqref{eq: AC} with $\phi=f_X$, $\psi=u_X$~and $\epsilon=1/2$, we deduce 
that
\[
\Q_1^\alpha(u_X,t)+\tfrac{1}{2}\Q_2^\alpha(\nabla u_X,t)
    \le C\Q^\alpha_2(u_X,t)+Ct^\alpha\Q^0(f_X,t)+\tfrac12\Q^\alpha_1(u_X,t).
\]
Hence, applying \cref{lem: D} with~$\phi=u_X$, we can show that the
function~$\mathsf{q}(t)=\Q^\alpha_1(u_X,t)+\Q^\alpha_2(\nabla u_X,t)$ satisfies
\[
\mathsf{q}(t)\le Ct^\alpha \Q^0(f_X,t)
	+C\int_0^t\omega_\alpha(t-s)\Q^\alpha_1(u_X,s)\,ds.
\]
Since $\Q^\alpha_1(u_X,s)\le\mathsf{q}(s)$, \cref{lem: Gronwall} implies
the first estimate.

To show the second estimate, use
$-\bigiprod{(\vec B_1u_X)(t),\nabla v}=\bigiprod{\nabla\cdot\vec B_1u_X(t),v}$
in~\eqref{eq: weak uX} to obtain
\begin{multline*}
\iprod{u_X(t),v}+\bigiprod{\kappa\nabla\FI^\alpha u_X(t),\nabla v}
    \le\tfrac12\|v\|^2+\tfrac32\|\nabla\cdot(\vec B_1u_X)(t)\|^2\\
	+\tfrac32\|(B_2u_X)(t)\|^2
	+\tfrac32\|f_X(t)\|^2.
\end{multline*}
Choosing $v=u_X(t)$, integrating in time, and using \eqref{eq: kappa Q1}, we 
have
\[
\tfrac12\Q^0(u_X,t)+\Q^\alpha_1(\nabla u_X,t)
    \le C\Q^0(\nabla\cdot\vec B_1u_X,t)+C\Q^0(B_2u_X,t)+C\Q^0(f_X,t).
\]
Since
\begin{multline}\label{eq: div B}
\nabla\cdot(B^\alpha_{\vec F}u_X)(t)
    =\bigl(\nabla\cdot\vec F(t)\bigr)\FI^\alpha u_X(t)
    +\vec F(t)\cdot\FI^\alpha\nabla u_X(t)\\
    -\int_0^t\Bigl(\bigl(\nabla\cdot\vec F'(s)\bigr)\FI^\alpha u_X(s)
        +\vec F'(s)\cdot\FI^\alpha\nabla u_X(s)\Bigr)\,ds
\end{multline}
it follows that
\begin{align*}
\|\nabla\cdot(B_{\vec F}^\alpha u_X)(t)\|^2&\le C\|\FI^\alpha u_X(t)\|^2
    +C\|\FI^\alpha\nabla u_X(t)\|^2\\
	&\qquad{}+C\int_0^t\Bigl(\|\FI^\alpha u_X(s)\|^2
	+\|\FI^\alpha\nabla u_X(s)\|^2\Bigr)\,ds,
\end{align*}
implying that $\Q^0(\nabla\cdot B_{\vec F}^\alpha u_X,t)
\le C\Q_2^\alpha(u_X,t)+C\Q_2^\alpha(\nabla u_X,t)$. In the same way, 
$\Q^0(\nabla\cdot B_{\vec G}^1u_X,t)\le C\Q_2^1(u_X,t)+C\Q_2^1(\nabla u_X,t)$
and therefore, by \cref{lem: E},
\[
\Q^0(\nabla\cdot\vec B_1u_X,t)\le C\Q^\alpha_2(u_X,t)
    +C\Q^\alpha_2(\nabla u_X,t).
\]
Recall $\Q^0(B_2u_X,t)\le C\Q^\alpha_2(u_X,t)$ and let
$\mathsf{q}(t)=\Q^0(u_X,t)+\Q^\alpha_1(\nabla u_X,t)$. It follows using
\cref{lem: D} and \eqref{eq: C} that
\begin{align*}
\mathsf{q}(t)&\le C\Q^\alpha_2(u_X,t)+C\Q^\alpha_2(\nabla u_X,t)+C\Q^0(f_X,t)\\
    &\le C\Q^0(f_X,t)+C\int_0^t\omega_\alpha(t-s)
        \Bigl(\Q^\alpha_1(u_X,s)+\Q^\alpha_1(\nabla u_X,s)\Bigr)\,ds\\
    &\le C\Q^0(f_X,t)+Ct^\alpha\int_0^t\omega_\alpha(t-s)\mathsf{q}(s)\,ds.
\end{align*}
We may now apply \cref{lem: Gronwall} to complete the proof.
\end{proof}

The function~$\Mult u_X(t)=tu_X(t)$ satisfies a similar estimate to
the first one in \cref{lem: uX estimate}, but with an additional
factor~$t^2$ on the right-hand side.

\begin{lemma}\label{lem: M uX estimate}
The solution~$u_X$ of~\eqref{eq: Volterra eqn X} satisfies
\[
\Q_1^\alpha(\Mult u_X,t)+\Q_2^\alpha(\Mult\nabla u_X,t)
	\le Ct^{2+\alpha}\Q^0(f_X,t)\quad\text{for $0\le t\le T$.}
\]
\end{lemma}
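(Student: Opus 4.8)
The plan is to mimic the proof of the first estimate in \cref{lem: uX estimate}, but applied to $\Mult u_X$ instead of $u_X$, keeping careful track of the extra powers of~$t$ that arise. First I would apply the multiplication operator~$\Mult$ to the weak identity~\eqref{eq: weak uX}, or equivalently test \eqref{eq: weak uX} with $v=\FI^\alpha(\Mult u_X)(t)$ after rewriting the terms $\FI^\alpha u_X$, $\vec B_1 u_X$ and $B_2 u_X$ in terms of $\Mult u_X$ using the commutator identity~\eqref{eq: commutator}. Concretely, $\Mult\FI^\alpha u_X=\FI^\alpha(\Mult u_X)+\alpha\FI^{\alpha+1}u_X$, and $\Mult B_{\vec F}^\alpha u_X$, $\Mult B_{\vec G}^1 u_X$, $\Mult B_a^\alpha u_X$, $\Mult B_b^1 u_X$ can be handled via~\eqref{eq: B3 estimate int}, which gives $\Q^0(\Mult B_\psi^\mu\phi,t)\le Ct^2\Q^\mu_2(\phi,t)$. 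So the $\Mult$-multiplied analogue of the right-hand side estimate becomes, after Cauchy--Schwarz and integrating in time,
\begin{align*}
\Q_1^\alpha(\Mult u_X,t)+\tfrac12\Q_2^\alpha(\Mult\nabla u_X,t)
&\le C\Q^0(\Mult\vec B_1 u_X,t)+C\Q^0(\Mult B_2 u_X,t)\\
&\quad{}+C\Q_2^\alpha(\Mult u_X,t)+C\,\|\FI^{\alpha+1}u_X\|\text{-type terms}\\
&\quad{}+\int_0^t\bigiprod{\Mult f_X(s),\FI^\alpha(\Mult u_X)(s)}\,ds,
\end{align*}
where the $\FI^{\alpha+1}u_X$ contributions are bounded using $\|\FI^{\alpha+1}u_X(t)\|^2\le t\,\Q^\alpha_2(u_X,t)$ together with the already-proved first estimate of \cref{lem: uX estimate}, namely $\Q^\alpha_2(\nabla u_X,t)\le Ct^\alpha\Q^0(f_X,t)$ and (via \cref{lem: D,lem: E} or \eqref{eq: B}) $\Q^\alpha_2(u_X,t)\le C\Q^\alpha_2(\nabla u_X,t)$ using the Poincar\'e inequality, so these terms are $O(t^{2+\alpha}\Q^0(f_X,t))$ or better.

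Next I would absorb the $\Q_2^\alpha(\Mult u_X,t)$ term on the right by invoking \cref{lem: D} with $\phi=\Mult u_X$, which converts $\Q_2^\alpha(\Mult u_X,t)$ into $2\int_0^t\omega_\alpha(t-s)\Q_1^\alpha(\Mult u_X,s)\,ds$; and the terms $\Q^0(\Mult\vec B_1 u_X,t)$, $\Q^0(\Mult B_2 u_X,t)$ from \eqref{eq: B3 estimate int} become $Ct^2\Q_2^\alpha(u_X,t)+Ct^2\Q_2^1(u_X,t)$, which by \cref{lem: E} and the first estimate of \cref{lem: uX estimate} are again $O(t^{2+\alpha}\Q^0(f_X,t))$. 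For the source term, I would use \eqref{eq: AC} (or \eqref{eq: A}) with $\phi=\Mult f_X$, $\psi=\Mult u_X$ and $\epsilon$ small to get a bound $\le Ct^\alpha\Q^0(\Mult f_X,t)+\tfrac12\Q_1^\alpha(\Mult u_X,t)$, absorbing the last term; note $\Q^0(\Mult f_X,t)=\int_0^t s^2\|f_X(s)\|^2\,ds\le t^2\Q^0(f_X,t)$, contributing the desired $t^{2+\alpha}\Q^0(f_X,t)$. Collecting, the quantity $\mathsf{q}(t)=\Q_1^\alpha(\Mult u_X,t)+\Q_2^\alpha(\Mult\nabla u_X,t)$ satisfies
\[
\mathsf{q}(t)\le Ct^{2+\alpha}\Q^0(f_X,t)
+C\int_0^t\omega_\alpha(t-s)\,\mathsf{q}(s)\,ds,
\]
since $\Q_1^\alpha(\Mult u_X,s)\le\mathsf{q}(s)$. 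Then \cref{lem: Gronwall} with $\beta=\alpha$, $\mathsf{a}(t)=Ct^{2+\alpha}\Q^0(f_X,t)$ (non-decreasing, as $\Q^0(f_X,\cdot)$ is non-decreasing), $\mathsf{b}(t)=C$, yields $\mathsf{q}(t)\le Ct^{2+\alpha}\Q^0(f_X,t)E_\alpha(Ct^\alpha)\le Ct^{2+\alpha}\Q^0(f_X,t)$ on $[0,T]$, which is the claim.

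The main obstacle I anticipate is the bookkeeping in the first step: applying $\Mult$ to the identity~\eqref{eq: weak uX} does not commute cleanly with $\FI^\alpha$, so one must carefully split each term using~\eqref{eq: commutator} and verify that every ``remainder'' of the form $\FI^{\alpha+1}u_X$ or $\FI^\alpha u_X$ (rather than $\FI^\alpha(\Mult u_X)$) is controlled by the \emph{already-established} bounds of \cref{lem: uX estimate} — i.e.\ that these cross terms genuinely land at order $t^{2+\alpha}\Q^0(f_X,t)$ and not at a lower power of~$t$ that would spoil the final bound. A secondary subtlety is ensuring that the coercivity step still works: after choosing $v=\FI^\alpha(\Mult u_X)(t)$ we need $\iprod{\kappa\nabla\FI^\alpha(\Mult u_X),\nabla v}$ to produce the term $\Q_2^\alpha(\Mult\nabla u_X,t)$ after integration, which requires rewriting $\nabla\FI^\alpha(\Mult u_X)=\FI^\alpha(\Mult\nabla u_X)$ (valid since $\nabla$ commutes with $\FI^\alpha$ and with $\Mult$) and then using \eqref{eq: lambda min}; the extra $\alpha\FI^{\alpha+1}\nabla u_X$ piece coming from the commutator must be moved to the right-hand side and absorbed as above. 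Once these cross terms are pinned down, the remaining manipulations are exactly parallel to \cref{lem: uX estimate}.
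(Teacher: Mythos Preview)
Your plan is correct and follows the same overall strategy as the paper: multiply \eqref{eq: weak uX} by~$t$, use the commutator~\eqref{eq: commutator}, test with $v=\FI^\alpha\Mult u_X$, bound the lower-order pieces via \cref{lem: uX estimate}, and close with \cref{lem: D} and \cref{lem: Gronwall}. The one tactical difference worth noting is how the cross term $\alpha\iprod{\kappa\FI^{\alpha+1}\nabla u_X,\nabla v}$ is handled. You propose to bound it directly by Cauchy--Schwarz, absorbing $\epsilon\|\nabla v\|^2$ into the coercive term and estimating $\int_0^t\|\FI^{\alpha+1}\nabla u_X\|^2\,ds\le Ct^2\Q_2^\alpha(\nabla u_X,t)\le Ct^{2+\alpha}\Q^0(f_X,t)$ from \cref{lem: uX estimate}; this works. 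The paper instead integrates \eqref{eq: weak uX} in time to obtain a second identity for $\iprod{\kappa\FI^{\alpha+1}\nabla u_X,\nabla v}$ and eliminates that term algebraically, which trades the commutator remainder for terms of the form $(\Mult-\alpha\FI^1)\vec B_1u_X$, $(\Mult-\alpha\FI^1)f_X$ and $\alpha\FI^1u_X$ that are then estimated by \eqref{eq: B3 estimate int} and \cref{lem: E}. Your direct absorption is arguably cleaner here; the paper's elimination trick avoids needing any extra room in the coercivity constant and reuses the same identity that drives the later lemmas. Either way the resulting Gronwall inequality for $\mathsf{q}(t)=\Q_1^\alpha(\Mult u_X,t)+\Q_2^\alpha(\Mult\nabla u_X,t)$ is the same.
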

\begin{proof}
Multiplying both sides of~\eqref{eq: weak uX} by~$t$, and
applying the third identity in~\eqref{eq: commutator}, we find that (since 
$\kappa$ is independent of~$t$)
\begin{multline}\label{eq: MuX}
\iprod{\Mult u_X,v}+\bigiprod{\kappa(\FI^\alpha\Mult
    +\alpha\FI^{\alpha+1})\nabla u_X,\nabla v}\\
    =\iprod{\Mult\vec B_1u_X,\nabla v}+\iprod{\Mult(f_X-B_2u_X),v},
\end{multline}
whereas integrating \eqref{eq: weak uX} in time gives
\[
\iprod{\kappa\FI^{\alpha+1}\nabla u_X,\nabla v}
	=\iprod{\FI^1\vec B_1u_X,\nabla v}
    +\bigiprod{\FI^1(f_X-u_X-B_2u_X),v},
\]
so, after eliminating $\iprod{\kappa\FI^{\alpha+1}\nabla u_X,\nabla v}$,
\begin{multline*}
\iprod{\Mult u_X,v}+\iprod{\kappa\FI^\alpha\Mult\nabla u_X,\nabla v}
    =\iprod{(\Mult-\alpha\FI^1)\vec B_1u_X,\nabla v}\\
	+\iprod{(\Mult-\alpha\FI^1)(f_X-B_2u_X)+\alpha\FI^1u_X,v}\\
    \le\tfrac12\|\nabla v\|^2
	+\tfrac12\|\vec B_3u_X\|^2+\tfrac12\|B_4u_X\|^2+\tfrac12\|v\|^2
	+\bigiprod{(\Mult-\alpha\FI^1)f_X+\alpha\FI^1 u_X,v},
\end{multline*}
where $\vec B_3\phi=(\Mult-\alpha\FI^1)\vec B_1\phi$~and
$B_4\phi=(\Mult-\alpha\FI^1)B_2$.
By choosing $v=\FI^\alpha\Mult u_X$, we have
$\iprod{\kappa\FI^\alpha\Mult\nabla u_X,\nabla v}
=\iprod{\kappa\nabla v,\nabla v}\ge\|\nabla v\|^2$ so, after
canceling the term~$\tfrac12\|\nabla v\|^2$ and integrating in time, 
\begin{align*}
\Q_1^\alpha&(\Mult u_X,t)+\tfrac12\Q_2^\alpha(\Mult\nabla u_X,t)\\
    &\le\tfrac12\Q^0(B_3u_X,t)+\tfrac12\Q^0(B_4u_X,t)
	+\tfrac12\Q_2^\alpha(\Mult u_X,t)\\
    &\qquad{}+\int_0^t\bigiprod{(\Mult-\alpha\FI^1)f_X,\FI^\alpha\Mult u_X}\,ds
	+\alpha\int_0^t\bigiprod{\FI^1 u_X,\FI^\alpha\Mult u_X}\,ds.
\end{align*}
Using \eqref{eq: AC}, we find that
\[
\int_0^t\bigiprod{(\Mult-\alpha\FI^1)f_X,\FI^\alpha\Mult u_X}\,ds
	\le Ct^\alpha\Q^0\bigl((\Mult-\alpha\FI^1)f_X,t)
		+\tfrac14\Q^\alpha_1(\Mult u_X,t)
\]
and
\[
\int_0^t\bigiprod{\FI^1 u_X,\FI^\alpha\Mult u_X}\,ds
	\le Ct^\alpha\Q^0(\FI^1 u_X,t)+\tfrac14\Q_1^\alpha(\Mult u_X,t),
\]
so
\begin{multline*}
\Q_1^\alpha(\Mult u_X,t)+\Q_2^\alpha(\Mult\nabla u_X,t)
    \le\Q^0(B_3u_X,t)+\Q^0(B_4u_X,t)\\
    +2\Q^\alpha_2(\Mult u_X,t)
	+Ct^\alpha\Q^0\bigl((\Mult-\alpha\FI^1)f_X,t)+Ct^\alpha\Q^0(\FI^1u_X,t).
\end{multline*}
Since 
\[
\vec B_3=(\Mult-\alpha\FI^1)B_{\vec F}^\alpha
+(\Mult-\alpha\FI^1)B_{\vec G}^1
\]
and
\[
B_4=(\Mult-\alpha\FI^1)B_a^\alpha+(\Mult-\alpha\FI^1)B_b^1,
\]
the estimate~\eqref{eq: B3 estimate int} gives
\begin{multline*}
\Q^0(\vec B_3u_X,t)+\Q^0(B_4u_X,t)\le Ct^2\Q^\alpha_2(u_X,t)+Ct^2\Q_2^1(u_X,t)\\
	\le Ct^2\Q^\alpha_2(u_X,t),
\end{multline*}
where, in the last step, we used \cref{lem: E} with $\mu=\alpha$~and
$\nu=1$.  We easily verify that
\[
\Q^0\bigl((\Mult-\alpha\FI^1)f_X,t)\le Ct^2\Q^0(f_X,t),
\]
and by \cref{lem: E} with $\mu=0$~and $\nu=1$,
\[
\Q^0(\FI^1u_X,t)=\Q_2^1(u_X,t)\le t^2\Q^0(u_X,t).
\]
Thus, the function
$\mathsf{q}(t)=\Q_1^\alpha(\Mult u_X,t)+\Q_2^\alpha(\Mult\nabla u_X,t)$
satisfies
\[
\mathsf{q}(t)\le Ct^2\Q^\alpha_2(u_X,t)+2\Q^\alpha_2(\Mult u_X,t)
	+Ct^{2+\alpha}\Q^0(f_X,t)+Ct^{2+\alpha}\Q^0(u_X,t).
\]
By \eqref{eq: B}~and \cref{lem: uX estimate},
\[
t^2\Q^\alpha_2(u_X,t)+t^{2+\alpha}\Q^0(u_X,t)
	\le Ct^{2+\alpha}\Q^0(u_X,t)\le Ct^{2+\alpha}\Q(f_X,t),
\]
and therefore, using \cref{lem: D} with $\phi=\Mult u_X$,
\[
\mathsf{q}(t)\le Ct^{2+\alpha}\Q^0(f_X,t)
	+C\int_0^t\omega_\alpha(t-s)\mathsf{q}(s)\,ds,
\]
The result now follows by applying \cref{lem: Gronwall}.
\end{proof}

\begin{lemma}\label{lem: (M uX)' estimate}
The solution~$u_X$ of~\eqref{eq: Volterra eqn X} satisfies, for $0\le t\le T$,
\[
\Q_1^\alpha\bigl((\Mult u_X)',t\bigr)
	+\Q_2^\alpha\bigl((\Mult\nabla u_X)',t\bigr)
	\le Ct^\alpha\Q^0(f_X,t)+Ct^\alpha\Q^0\bigl((\Mult f_X)',t\bigr).
\]
\end{lemma}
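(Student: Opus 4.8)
The plan is to differentiate the basic weak identity~\eqref{eq: weak uX} in a suitable form, test with $v=\FI^\alpha(\Mult u_X)'$, exploit the coercivity provided by~\eqref{eq: kappa Q1}, and then absorb everything that is not already controlled into a fractional Gronwall argument via~\cref{lem: D} and~\cref{lem: Gronwall}. Concretely, I would start from the version of~\eqref{eq: weak uX} obtained after eliminating $\iprod{\kappa\FI^{\alpha+1}\nabla u_X,\nabla v}$, exactly as in the proof of~\cref{lem: M uX estimate}, namely
\[
\iprod{\Mult u_X,v}+\iprod{\kappa\FI^\alpha\Mult\nabla u_X,\nabla v}
	=\iprod{(\Mult-\alpha\FI^1)\vec B_1u_X,\nabla v}
	+\iprod{(\Mult-\alpha\FI^1)(f_X-B_2u_X)+\alpha\FI^1u_X,v},
\]
and then differentiate in~$t$. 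Here I would use~\eqref{eq: FI omega} to push the time derivative inside the fractional integrals, writing $(\FI^\alpha\Mult\nabla u_X)'=\FI^\alpha(\Mult\nabla u_X)'$ and $(\FI^1\psi)'=\psi$, so that the coercive term becomes $\iprod{\kappa\FI^\alpha(\Mult\nabla u_X)',\nabla v}$. Choosing $v=\FI^\alpha(\Mult u_X)'(t)$ then gives, after using~\eqref{eq: lambda min} to drop the $\tfrac12\|\nabla v\|^2$ term and integrating in time,
\[
\Q_1^\alpha\bigl((\Mult u_X)',t\bigr)+\tfrac12\Q_2^\alpha\bigl((\Mult\nabla u_X)',t\bigr)
	\le\tfrac12\Q_2^\alpha\bigl((\Mult u_X)',t\bigr)+\tfrac12\Q^0(\vec B_5 u_X,t)
	+\tfrac12\Q^0(B_6 u_X,t)+\text{(source terms)},
\]
where $\vec B_5 u_X=(\Mult B_{\vec F}^\alpha u_X)'+(\Mult B_{\vec G}^1 u_X)'$ and similarly for $B_6$, up to lower-order $\FI^1$ contributions; the divergence form $-\iprod{\,\cdot\,,\nabla v}=\iprod{\nabla\cdot(\,\cdot\,),v}$ should be used on the $\vec B$-terms so that everything is tested against $v$ rather than $\nabla v$.

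The workhorse estimate is~\eqref{eq: B5 estimate int}, which bounds $\Q^0\bigl((\Mult B^\mu_\psi\phi)',t\bigr)$ by $C\Q^\mu_2\bigl((\Mult\phi)',t\bigr)+C\Q^\mu_2(\Mult\phi,t)+C\Q^\mu_2(\phi,t)$; I would apply it with $\mu=\alpha$, $\psi=\vec F,a$ and with $\mu=1$, $\psi=\vec G,b$, and also to the divergences $\nabla\cdot\vec F$ etc. (exactly as~\eqref{eq: div B} was used in~\cref{lem: uX estimate}), collapsing the $\mu=1$ pieces into the $\mu=\alpha$ pieces by~\cref{lem: E}. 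This reduces the right-hand side to
\[
C\Q^\alpha_2\bigl((\Mult u_X)',t\bigr)+C\Q^\alpha_2\bigl((\Mult\nabla u_X)',t\bigr)
	+C\Q^\alpha_2(\Mult u_X,t)+C\Q^\alpha_2(\Mult\nabla u_X,t)
	+C\Q^\alpha_2(u_X,t)+C\Q^\alpha_2(\nabla u_X,t)
\]
plus the source contributions. The middle terms $\Q^\alpha_2(\Mult u_X,t)+\Q^\alpha_2(\Mult\nabla u_X,t)$ are $\le Ct^\alpha\Q^0(f_X,t)$ by~\eqref{eq: B} and~\cref{lem: M uX estimate}, and $\Q^\alpha_2(u_X,t)+\Q^\alpha_2(\nabla u_X,t)\le Ct^\alpha\Q^0(f_X,t)$ by~\eqref{eq: B} and~\cref{lem: uX estimate}; for the sources I would use~\eqref{eq: AC} as in the previous two lemmas to peel off a small multiple of $\Q^\alpha_1\bigl((\Mult u_X)',t\bigr)$ together with $Ct^\alpha\Q^0(f_X,t)+Ct^\alpha\Q^0\bigl((\Mult f_X)',t\bigr)$, noting that the $\alpha\FI^1 u_X$ source term contributes $\Q^0(\FI^1 u_X,t)=\Q_2^1(u_X,t)\le t^2\Q^0(u_X,t)\le Ct^2\Q^0(f_X,t)$.

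The one genuinely new difficulty is the term $C\Q^\alpha_2\bigl((\Mult u_X)',t\bigr)$ appearing on the right: it has the same "weight" as the quantity we are estimating, so it cannot simply be absorbed. The remedy is~\cref{lem: D} applied with $\phi=(\Mult u_X)'$, which converts $\Q^\alpha_2\bigl((\Mult u_X)',t\bigr)$ into $2\int_0^t\omega_\alpha(t-s)\Q^\alpha_1\bigl((\Mult u_X)',s\bigr)\,ds$; since $\Q^\alpha_1\bigl((\Mult u_X)',s\bigr)\le\mathsf{q}(s)$ for $\mathsf{q}(t)=\Q_1^\alpha\bigl((\Mult u_X)',t\bigr)+\Q_2^\alpha\bigl((\Mult\nabla u_X)',t\bigr)$, we arrive at
\[
\mathsf{q}(t)\le Ct^\alpha\Q^0(f_X,t)+Ct^\alpha\Q^0\bigl((\Mult f_X)',t\bigr)
	+C\int_0^t\omega_\alpha(t-s)\mathsf{q}(s)\,ds,
\]
at which point~\cref{lem: Gronwall} (with $\beta=\alpha$, $\mathsf{a}(t)=Ct^\alpha\Q^0(f_X,t)+Ct^\alpha\Q^0((\Mult f_X)',t)$ non-decreasing, $\mathsf{b}(t)=C$) yields the claimed bound. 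The bookkeeping of the $\FI^1$ remainder terms produced by the commutator~\eqref{eq: commutator} — showing each is of strictly lower order and hence harmless — is the most tedious part, but it is entirely parallel to what was already done in~\cref{lem: M uX estimate}; no new idea beyond~\eqref{eq: B5 estimate int} plus the $\Q^\alpha_2\to\Q^\alpha_1$ trade via~\cref{lem: D} is needed.
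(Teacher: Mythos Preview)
Your overall architecture---differentiate an $\Mult$-weighted identity, test with $v=\FI^\alpha(\Mult u_X)'$, invoke \eqref{eq: B5 estimate int}, then close via \cref{lem: D} and \cref{lem: Gronwall}---matches the paper. But there is a genuine gap in the step where you propose to use the divergence form $-\iprod{\,\cdot\,,\nabla v}=\iprod{\nabla\cdot(\,\cdot\,),v}$ on the $\vec B$-terms. Once you do that and expand $\nabla\cdot\vec B_5u_X$ via \eqref{eq: div B}, estimate \eqref{eq: B5 estimate int} (applied with $\phi=\nabla u_X$) produces the term $C\Q^\alpha_2\bigl((\Mult\nabla u_X)',t\bigr)$ that you list on the right-hand side. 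You flag only $C\Q^\alpha_2\bigl((\Mult u_X)',t\bigr)$ as ``the one genuinely new difficulty,'' but the gradient term is worse: it matches the quantity $\tfrac12\Q^\alpha_2\bigl((\Mult\nabla u_X)',t\bigr)$ on the left with an uncontrolled constant~$C$, so it cannot be absorbed, and \cref{lem: D} does not rescue you either, since it converts $\Q^\alpha_2\bigl((\Mult\nabla u_X)',t\bigr)$ into a convolution with $\Q^\alpha_1\bigl((\Mult\nabla u_X)',s\bigr)$, which is \emph{not} dominated by your $\mathsf q(s)=\Q^\alpha_1\bigl((\Mult u_X)',s\bigr)+\Q^\alpha_2\bigl((\Mult\nabla u_X)',s\bigr)$.

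The paper sidesteps this by \emph{not} integrating by parts here. It differentiates \eqref{eq: MuX} directly to obtain \eqref{eq: uX diff}, keeps $\iprod{\vec B_5u_X-\alpha\kappa\FI^\alpha\nabla u_X,\nabla v}$ paired with $\nabla v$, and bounds it by $\tfrac12\|\nabla v\|^2+\|\vec B_5u_X\|^2+C\|\FI^\alpha\nabla u_X\|^2$. With $v=\FI^\alpha(\Mult u_X)'$ the coercive term gives $\|\nabla v\|^2$, so the $\tfrac12\|\nabla v\|^2$ cancels. Now \eqref{eq: B5 estimate int} is applied to $\vec B_5u_X$ itself (with $\phi=u_X$, not $\nabla u_X$), yielding only $C\Q^\alpha_2\bigl((\Mult u_X)',t\bigr)+C\Q^\alpha_2(\Mult u_X,t)+C\Q^\alpha_2(u_X,t)$ on the right---no $(\Mult\nabla u_X)'$ term at all. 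The residual $C\Q^\alpha_2(\nabla u_X,t)$ (from the $\alpha\kappa\FI^\alpha\nabla u_X$ piece) is harmless by \cref{lem: uX estimate}. With this correction your Gronwall closure goes through exactly as you describe. The integration-by-parts trick you borrowed from \cref{lem: uX estimate} is the right move in \cref{lem: (M nabla uX)' estimate}, where the test function is $(\Mult u_X)'$ and one has $\Q^\alpha_1\bigl((\Mult\nabla u_X)',t\bigr)$ on the left; it is the wrong move here.
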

\begin{proof}
By differentiating \eqref{eq: MuX} with respect to~$t$, we have
\begin{multline}\label{eq: uX diff}
\bigiprod{(\Mult u_X)',v}+\bigiprod{\kappa\nabla(\FI^\alpha\Mult u_X)',\nabla v}
    =\bigiprod{\vec B_5u_X-\alpha\kappa\FI^\alpha\nabla u_X,\nabla v}\\
    +\bigiprod{(\Mult f_X)'-B_6u_X,v},
\end{multline}
where $\vec B_5\phi=(\Mult\vec B_1\phi)'$~and $B_6\phi=(\Mult B_2\phi)'$. Hence,
\begin{multline*}
\bigiprod{(\Mult u_X)',v}+\bigiprod{\kappa\nabla(\FI^\alpha\Mult u_X)',\nabla v}
       \le\tfrac12\|\nabla v\|^2
	+\|\vec B_5u_X\|^2+\tfrac12\|B_6u_X\|^2\\
	+\tfrac12\|v\|^2
	+C\|\FI^\alpha\nabla u_X\|^2+\bigiprod{(\Mult f_X)',v}.
\end{multline*}
Putting~$v=\FI^\alpha(\Mult u_X)'$, we can cancel $\tfrac12\|\nabla v\|^2$
because $v=(\FI^\alpha\Mult u_X)'$ by~\eqref{eq: FI omega}.  Thus, by
integrating in time and using \eqref{eq: AC} to show
\[
\int_0^t\bigiprod{(\Mult f_X)',\FI^\alpha(\Mult u_X)'}\,ds
	\le Ct^\alpha\Q^0\bigl((\Mult f_X)',t\bigr)
		+\tfrac12\Q_1^\alpha\bigl((\Mult u_X)',t\bigr),
\]
and using \eqref{eq: kappa Q1}, we arrive at the estimate
\begin{multline*}
\Q_1^\alpha\bigl((\Mult u_X)',t\bigr)
	+\Q_2^\alpha\bigl((\Mult\nabla u_X)',t\bigr)
	\le2\Q^0(\vec B_5u_X,t)+\Q^0(B_6u_X,t)\\
	+\Q^\alpha_2\bigl((\Mult u_X)',t\bigr)
	+C\Q_2^\alpha(\nabla u_X,t)+Ct^\alpha\Q^0\bigl((\Mult f_X)',t\bigr).
\end{multline*}
Since 
\[
\vec B_5u_X=(\Mult B^\alpha_{\vec F}u_X)'+(\Mult B^1_{\vec G}u_X)'
\]
and
\[
B_6u_X=(\Mult B_a^\alpha u_X)'+(\Mult B_b^1 u_X)',
\]
it follows from \eqref{eq: B5 estimate int} that
\begin{multline*}
\Q^0(\vec B_5u_X,t)+\Q^0(B_6u_X,t)\le C\Q^\alpha_2\bigl((\Mult u_X)',t\bigr)
	+C\Q^\alpha_2(\Mult u_X,t)\\
	+C\Q^\alpha_2(u_X,t).
\end{multline*}
By \cref{lem: E,lem: uX estimate,lem: M uX estimate},
\begin{align*}
\Q^\alpha_2(\Mult u_X,t)+\Q^\alpha_2(u_X,t)
	&\le Ct^\alpha\Q^\alpha_1(\Mult u_X,t)+Ct^\alpha\Q^\alpha_1(u_X,t)\\
	&\le C(t^{2+2\alpha}+t^{2\alpha})\Q^0(f_X,t)
\end{align*}
and $\Q^\alpha_2(\nabla u_X,t)\le Ct^\alpha\Q^0(f_X,t)$.  Hence, the function
\[
\mathsf{q}(t)=\Q_1^\alpha\bigl((\Mult u_X)',t\bigr)
+\Q_2^\alpha\bigl((\Mult\nabla u_X)',t\bigr)
\]
satisfies
\[
\mathsf{q}(t)\le Ct^\alpha\Q^0(f_X,t)+Ct^\alpha\Q^0\bigl((\Mult f_X)',t\bigr)
	+C\Q^\alpha_2\bigl((\Mult u_X)',t\bigr).
\]
Finally, by \cref{lem: D},
\[
\Q^\alpha_2\bigl((\Mult u_X)',t\bigr)\le C\int_0^t
	\omega_\alpha(t-s)\Q^\alpha_1\bigl((\Mult u_X)',s\bigr)\,ds
\le C\int_0^t \omega_\alpha(t-s)\mathsf{q}(s)\,ds,
\]
and the desired estimate follows by \cref{lem: Gronwall}.
\end{proof}

\begin{lemma}\label{lem: (M nabla uX)' estimate}
The solution~$u_X$ of~\eqref{eq: Volterra eqn X} satisfies, for $0\le t\le T$,
\[
\Q^0\bigl((\Mult u_X)',t\bigr)+\Q_1^\alpha\bigl((\Mult\nabla u_X)',t\bigr)
	\le C\Q^0(f_X,t)+C\Q^0\bigl((\Mult f_X)',t\bigr)
\]
\end{lemma}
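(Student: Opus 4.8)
\emph{Proof proposal.} The plan is to re-run the argument behind the second estimate of \cref{lem: uX estimate}, but starting from the differentiated identity~\eqref{eq: uX diff} in place of~\eqref{eq: weak uX}. The guiding observation is that, \cref{lem: uX estimate,lem: M uX estimate,lem: (M uX)' estimate} being already available, every ``data'' quantity that will appear on the right-hand side is already controlled; consequently---in contrast with the three preceding lemmas---no fresh fractional Gronwall argument should be needed, the only self-referential terms appearing with prefactors that can be absorbed on the left.

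First I would split the pairing $\bigiprod{\vec B_5 u_X-\alpha\kappa\FI^\alpha\nabla u_X,\nabla v}$ in~\eqref{eq: uX diff} into $-\bigiprod{\nabla\cdot\vec B_5 u_X,v}$ (having moved the gradient onto $v$, which is legitimate because the product rule for the divergence, cf.~\eqref{eq: div B}, shows $\nabla\cdot\vec B_5 u_X\in L_2(\Omega)$) and $-\alpha\bigiprod{\kappa\FI^\alpha\nabla u_X,\nabla v}$ (left as it stands), keeping the diffusion term $\bigiprod{\kappa\nabla(\FI^\alpha\Mult u_X)',\nabla v}$ untouched. Next I would choose the test function $v=(\Mult u_X)'$---this is what makes the diffusion term coercive---which lies in $C([0,T];X)$ since $u_X\in C^1_\alpha([0,T];X)$ by \cref{thm: uX reg}; applying~\eqref{eq: FI omega} to $\phi=\Mult u_X$ (so $\phi(0)=0$) gives $\nabla(\FI^\alpha\Mult u_X)'=\FI^\alpha(\Mult\nabla u_X)'$ and $\nabla v=(\Mult\nabla u_X)'$, so the left side of~\eqref{eq: uX diff} becomes $\|(\Mult u_X)'\|^2+\bigiprod{\kappa\FI^\alpha(\Mult\nabla u_X)',(\Mult\nabla u_X)'}$. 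Integrating in time and invoking~\eqref{eq: kappa Q1}, the left side is then bounded below by $\Q^0((\Mult u_X)',t)+\Q_1^\alpha((\Mult\nabla u_X)',t)$.

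On the right, the terms $\bigiprod{(\Mult f_X)'-B_6 u_X,v}$ and $-\bigiprod{\nabla\cdot\vec B_5 u_X,v}$ are handled by Cauchy--Schwarz and Young, producing a controllable fraction of $\Q^0((\Mult u_X)',t)$ (absorbed on the left) together with $C\Q^0((\Mult f_X)',t)+C\Q^0(B_6 u_X,t)+C\Q^0(\nabla\cdot\vec B_5 u_X,t)$. Writing $\nabla\cdot\vec B_5 u_X=(\Mult\nabla\cdot\vec B_1 u_X)'$ and $B_6 u_X=(\Mult B_2 u_X)'$ as finite sums of operators of the type $(\Mult B_\psi^\mu\phi)'$---with $\psi$ ranging over $\nabla\cdot\vec F$, the components of $\vec F$, $\nabla\cdot\vec G$, the components of $\vec G$, $a$ and $b$, with $\phi$ equal to $u_X$ or to a component $\partial_i u_X$, and $\mu\in\{\alpha,1\}$---the estimate~\eqref{eq: B5 estimate int} together with \cref{lem: E} bounds $\Q^0(\nabla\cdot\vec B_5 u_X,t)+\Q^0(B_6 u_X,t)$ by a constant times the six quantities $\Q_2^\alpha((\Mult u_X)',t)$, $\Q_2^\alpha((\Mult\nabla u_X)',t)$, $\Q_2^\alpha(\Mult u_X,t)$, $\Q_2^\alpha(\Mult\nabla u_X,t)$, $\Q_2^\alpha(u_X,t)$ and $\Q_2^\alpha(\nabla u_X,t)$; each of these is in turn, by~\eqref{eq: B} followed by \cref{lem: uX estimate,lem: M uX estimate,lem: (M uX)' estimate}, at most $C\,t^{\delta}\bigl(\Q^0(f_X,t)+\Q^0((\Mult f_X)',t)\bigr)$ with some $\delta\ge\alpha$, hence at most $C\bigl(\Q^0(f_X,t)+\Q^0((\Mult f_X)',t)\bigr)$ on $[0,T]$. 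In particular $\Q_2^\alpha((\Mult\nabla u_X)',t)$ is furnished by \cref{lem: (M uX)' estimate} and is not the quantity being estimated here, so there is no circularity.

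The genuinely delicate term---the one I expect to be the main obstacle---is $-\alpha\bigiprod{\kappa\FI^\alpha\nabla u_X,(\Mult\nabla u_X)'}$: an ordinary Cauchy--Schwarz would leave $\int_0^t\|(\Mult\nabla u_X)'\|^2\,ds$ on the right, which none of the available estimates controls, since the diffusion operator is coercive only in the weaker $\Q_1^\alpha$-sense. Instead I would rewrite it, using that $\kappa$ is symmetric and time-independent, as $-\alpha\int_0^t\bigiprod{(\Mult\nabla u_X)',\FI^\alpha(\kappa\nabla u_X)}\,ds$ and apply the \emph{fractional} Cauchy--Schwarz~\eqref{eq: A} with $\phi=(\Mult\nabla u_X)'$, $\psi=\kappa\nabla u_X$ and $\epsilon=\alpha(1-\alpha)^{-2}$, obtaining the bound $\tfrac14\Q_1^\alpha((\Mult\nabla u_X)',t)+C\Q_1^\alpha(\kappa\nabla u_X,t)$; since~\eqref{eq: Q1 Plancherel} gives $\Q_1^\alpha(\kappa\nabla u_X,t)\le\|\kappa\|_\infty^2\,\Q_1^\alpha(\nabla u_X,t)$ and \cref{lem: uX estimate} gives $\Q_1^\alpha(\nabla u_X,t)\le C\Q^0(f_X,t)$, while $\tfrac14\Q_1^\alpha((\Mult\nabla u_X)',t)$ is absorbed into the coercive term via~\eqref{eq: kappa Q1}. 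Collecting all the contributions and absorbing the surviving fractions of $\Q^0((\Mult u_X)',t)$ and $\Q_1^\alpha((\Mult\nabla u_X)',t)$ on the left then yields $\Q^0((\Mult u_X)',t)+\Q_1^\alpha((\Mult\nabla u_X)',t)\le C\Q^0(f_X,t)+C\Q^0((\Mult f_X)',t)$, as required.
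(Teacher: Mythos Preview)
Your argument is correct and tracks the paper's proof closely: the same integration by parts on~$\vec B_5$, the same test function $v=(\Mult u_X)'$, the same use of~\eqref{eq: kappa Q1} for coercivity, the same decomposition of $\nabla\cdot\vec B_5u_X$ and $B_6u_X$ via~\eqref{eq: B5 estimate int}, and the same treatment of the cross term~$-\alpha\bigiprod{\kappa\FI^\alpha\nabla u_X,(\Mult\nabla u_X)'}$ via~\eqref{eq: A}. The one substantive difference is the closing step. The paper, having arrived at
\[
\mathsf q(t)\le C\Q^0\bigl((\Mult f_X)',t\bigr)+C\Q^0(f_X,t)
+C\Q_2^\alpha\bigl((\Mult u_X)',t\bigr)+C\Q_2^\alpha\bigl((\Mult\nabla u_X)',t\bigr),
\]
invokes \cref{lem: D} to convert the last two terms into a convolution with~$\omega_\alpha$ and then applies \cref{lem: Gronwall}. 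You instead observe that both $\Q_2^\alpha\bigl((\Mult\nabla u_X)',t\bigr)$ and (via~\eqref{eq: B}) $\Q_2^\alpha\bigl((\Mult u_X)',t\bigr)$ are already bounded by $Ct^\alpha\bigl(\Q^0(f_X,t)+\Q^0((\Mult f_X)',t)\bigr)$ thanks to \cref{lem: (M uX)' estimate}, so the Gronwall step is unnecessary. This is a genuine, if modest, simplification: the paper's route is self-contained at the level of this lemma, whereas yours leans harder on the previous lemma but finishes more directly.
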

\begin{proof}
Using $-\bigiprod{\vec B_5u_X,\nabla v}=\bigiprod{\nabla\cdot\vec B_5u_X(t),v}$
in~\eqref{eq: uX diff}, we obtain
\begin{multline*}
\bigiprod{(\Mult u_X)',v}
	+\bigiprod{\kappa\FI^\alpha(\Mult\nabla u_X)',\nabla v}\le\tfrac12\|v\|^2
	+2\|\nabla\cdot\vec B_5u_X\|^2+2\|B_6u_X\|^2\\
	+\|(\Mult f_X)'\|^2-\alpha\iprod{\kappa\FI^\alpha\nabla u_X,\nabla v}.
\end{multline*}
Choosing $v=(\Mult u_X)'$, integrating in time, and using 
\eqref{eq: kappa Q1} yields
\begin{multline*}
\tfrac12\Q^0\bigl((\Mult u_X)',t\bigr)
	+\Q^\alpha_1\bigl((\Mult\nabla u_X)',t\bigr)
	\le 2\Q^0\bigl(\nabla\cdot\vec B_5u_X,t\bigr)+2\Q^0(B_6u_X,t)\\
	+\Q^0\bigl((\Mult f_X)',t\bigr)
	-\alpha\int_0^t\bigiprod{(\Mult\nabla u_X)'(s),
		\kappa\FI^\alpha\nabla u_X(s)}\,ds.
\end{multline*}
Recall from~\eqref{eq: div B} that
$\nabla\cdot B^\alpha_{\vec F}\phi=B^\alpha_{\nabla\cdot\vec F}\phi
+B^\alpha_{\vec F\cdot{}}\nabla\phi$, where we have used the notation
\[
B^\alpha_{\vec F\cdot{}}\nabla\phi=\vec F(t)\cdot\FI^\alpha\nabla\phi
	-\int_0^t\vec F'(s)\cdot\FI^\alpha\nabla\phi(s)\,ds.
\]
Thus,
\begin{align*}
\nabla\cdot\vec B_5u_X&=\nabla\cdot\bigl(\Mult\vec B_1u_X\bigr)'
	=\bigl(\Mult\nabla\cdot\vec B_1u_X\bigr)'\\
	&=\bigl(\Mult\nabla\cdot B^\alpha_{\vec F}u_X\bigr)'
	+\bigl(\Mult\nabla\cdot B^1_{\vec G}u_X\bigr)'\\
	&=\bigl(\Mult B^\alpha_{\nabla\cdot\vec F}u_X\bigr)'
	+\bigl(\Mult B^\alpha_{\vec F\cdot{}}\nabla u_X\bigr)'\\
	&\qquad{}+\bigl(\Mult B^1_{\nabla\cdot\vec G}u_X\bigr)'
	+\bigl(\Mult B^1_{\vec G\cdot{}}\nabla u_X\bigr)',
\end{align*}
and so, by~\eqref{eq: B5 estimate int},
\begin{multline*}
\Q^0\bigl(\nabla\cdot\vec B_5u_X,t\bigr)+\Q^0(B_6u_X,t)
	\le C\Q_2^\alpha\bigl((\Mult u_X)',t\bigr)
	+C\Q_2^\alpha(\Mult u_X,t)\\
	+C\Q_2^\alpha(u_X,t)
	+C\Q^\alpha_2\bigl((\Mult\nabla u_X)',t\bigr)
	+C\Q^\alpha_2(\Mult\nabla u_X,t)+C\Q^\alpha_2(\nabla u_X,t).
\end{multline*}
By~\eqref{eq: A},
\[
\int_0^t\bigiprod{(\Mult\nabla u_X)'(s),\kappa\FI^\alpha\nabla u_X(s)}\,ds
	\le \tfrac12 \Q^\alpha_1\bigl((\Mult\nabla u_X)',t\bigr)
		+C\Q^\alpha_1(\nabla u_X,t),
\]
and thus the function $\mathsf{q}(t)=\Q^0\bigl((\Mult u_X)',t\bigr)
+\Q^\alpha_1\bigl((\Mult\nabla u_X)',t\bigr)$ satisfies
\begin{align*}
\mathsf{q}(t)&\le C\Q_2^\alpha\bigl((\Mult u_X)',t\bigr)
	+C\Q_2^\alpha(\Mult u_X,t)+C\Q_2^\alpha(u_X,t)\\
	&\qquad{}+C\Q^\alpha_2\bigl((\Mult\nabla u_X)',t\bigr)
	+C\Q^\alpha_2(\Mult\nabla u_X,t)+C\Q^\alpha_2(\nabla u_X,t)\\
	&\qquad{}+C\Q^0\bigl((\Mult f_X)',t\bigr)+C\Q^\alpha_1(\nabla u_X,t)\\
&\le C\Q_2^\alpha\bigl((\Mult u_X)',t\bigr)
	+Ct^\alpha\Q_1^\alpha(\Mult u_X,t)+Ct^\alpha\Q_1^\alpha(u_X,t)\\
	&\qquad{}+C\Q^\alpha_2\bigl((\Mult\nabla u_X)',t\bigr)
	+Ct^{2+\alpha}\Q^0(f_X,t)+Ct^\alpha\Q^0(f_X,t)\\
	&\qquad{}+C\Q^0\bigl((\Mult f_X)',t\bigr)+C\Q^0(f_X,t),
\end{align*}
where, in the second step, we used 
\cref{lem: D,lem: uX estimate,lem: M uX estimate}.  A further 
application of \cref{lem: uX estimate,lem: M uX estimate} yields
\begin{multline*}
\mathsf{q}(t)\le C\Q^0\bigl((\Mult f_X)',t\bigr)+C\Q^0(f_X,t)
	+C\Q_2^\alpha\bigl((\Mult u_X)',t\bigr)\\
	+C\Q^\alpha_2\bigl((\Mult\nabla u_X)',t\bigr).
\end{multline*}
\cref{lem: D} implies that $\Q_2^\alpha\bigl((\Mult u_X)',t\bigr)
+\Q^\alpha_2\bigl((\Mult\nabla u_X)',t\bigr)$ is bounded by
\begin{multline*}
C\int_0^t\omega_\alpha(t-s)\Bigl(
	\Q_1^\alpha\bigl((\Mult u_X)',s\bigr)
	+\Q^\alpha_1\bigl((\Mult\nabla u_X)',s\bigr)\Bigr)\,ds\\
	\le C\int_0^t\omega_\alpha(t-s)\mathsf{q}(s)\,ds,
\end{multline*}
where we used $\Q_1^\alpha\bigl((\Mult u_X)',s\bigr)
\le Ct^\alpha\Q^0 \bigl((\Mult u_X)',s\bigr)$, which follows by \cref{lem: E}.  
Finally, \cref{lem: Gronwall} implies the desired estimate.
\end{proof}

The preceding lemmas yield the main result for this section.

\begin{theorem}\label{thm: uX grad uX bound}
Assume that the coefficients satisfy~\eqref{eq: reg coeff}, that
the initial data $u_0\in L_2(\Omega)$ and that the source term
satisfies~\eqref{eq: g bound}.  Then, the solution~$u_X$ of the projected
Volterra equation~\eqref{eq: Volterra eqn X} satisfies (with $C$ independent
of~$X$)
\[
\|u_X(t)\|^2+t^\alpha\|\nabla u_X(t)\|^2
	\le C\bigl(\|u_0\|^2+M^2t^{2\eta}\bigr)\quad\text{for $0\le t\le T$.}
\]
\end{theorem}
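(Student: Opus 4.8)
The plan is to turn the integral (``energy'') estimates of \cref{lem: (M uX)' estimate,lem: (M nabla uX)' estimate} into pointwise bounds by applying \cref{lem: pointwise bound} twice: once to $\phi=\Mult u_X$, to control $\|u_X(t)\|$, and once to the vector-valued function $\phi=\nabla(\Mult u_X)=\Mult\nabla u_X$, to control $\|\nabla u_X(t)\|$. In each case $\phi(0)=\veczero$ is clear, and $\FI^\alpha\phi'(0)=\veczero$ holds because $(\Mult u_X)'=u_X+\Mult u_X'$ and its gradient stay bounded as $t\to0^+$ (using $u_X\in C([0,T];X)$ together with $\|u_X'(t)\|_X=O(t^{\eta-1})$), so that their $\FI^\alpha$-transforms are $O(t^\alpha)$. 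With these checks in place, \cref{lem: pointwise bound} yields
\[
t^2\|u_X(t)\|^2\le2\omega_{2-\alpha}(t)\,\Q_1^\alpha\bigl((\Mult u_X)',t\bigr),\qquad
t^2\|\nabla u_X(t)\|^2\le2\omega_{2-\alpha}(t)\,\Q_1^\alpha\bigl((\Mult\nabla u_X)',t\bigr).
\]

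Next I would substitute the bounds already available for the right-hand sides: \cref{lem: (M uX)' estimate} gives $\Q_1^\alpha((\Mult u_X)',t)\le Ct^\alpha\bigl(\Q^0(f_X,t)+\Q^0((\Mult f_X)',t)\bigr)$, and \cref{lem: (M nabla uX)' estimate} gives $\Q_1^\alpha((\Mult\nabla u_X)',t)\le C\bigl(\Q^0(f_X,t)+\Q^0((\Mult f_X)',t)\bigr)$. It then remains only to estimate the data. Since $f_X(t)$ is the $L_2$-projection of $f(t)=u_0+\int_0^tg(s)\,ds$ and projection does not increase the $L_2$-norm, \eqref{eq: g bound} gives $\|f_X(t)\|\le\|u_0\|+M\eta^{-1}t^\eta$; moreover $(\Mult f_X)'(t)=f_X(t)+t\,f_X'(t)$ with $\|f_X'(t)\|\le\|g(t)\|\le Mt^{\eta-1}$, so $\|(\Mult f_X)'(t)\|\le\|u_0\|+(1+\eta^{-1})Mt^\eta$. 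Squaring and integrating over $(0,t)$ then gives $\Q^0(f_X,t)+\Q^0\bigl((\Mult f_X)',t\bigr)\le Ct\bigl(\|u_0\|^2+M^2t^{2\eta}\bigr)$.

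Combining the three ingredients with $\omega_{2-\alpha}(t)=t^{1-\alpha}/\Gamma(2-\alpha)$, the first displayed inequality becomes $t^2\|u_X(t)\|^2\le Ct^{1-\alpha}\cdot t^\alpha\cdot t\,(\|u_0\|^2+M^2t^{2\eta})$, i.e.\ $\|u_X(t)\|^2\le C(\|u_0\|^2+M^2t^{2\eta})$ for $t>0$, while the second becomes $t^2\|\nabla u_X(t)\|^2\le Ct^{1-\alpha}\cdot t\,(\|u_0\|^2+M^2t^{2\eta})$, i.e.\ $t^\alpha\|\nabla u_X(t)\|^2\le C(\|u_0\|^2+M^2t^{2\eta})$. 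Adding these and noting that the claim is trivial at $t=0$ (where $u_X(0)=f_X(0)$) finishes the proof.

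The one genuine obstacle is supplying the regularity of $u_X$ that powers \cref{lem: pointwise bound} (and that is already tacitly used in \cref{lem: (M uX)' estimate,lem: (M nabla uX)' estimate}, whose statements involve $(\Mult u_X)'$): under \eqref{eq: reg coeff} alone, with $g$ merely measurable obeying \eqref{eq: g bound}, \cref{thm: uX exist unique} gives only $u_X\in Y=C([0,T];X)$. What is needed is $u_X\in W^1_1\bigl((0,T);X\bigr)$ with $\|u_X'(t)\|_X=O(t^{\eta-1})$ near $t=0$, which I would obtain either directly from the resolvent representation $u_X=f_X-\mathcal R_Xf_X$ (using that $f_X$ lies in this class and that the weakly singular Volterra operator $\mathcal K_X$ maps the class into itself with summable operator norms), or by first proving the estimate for data smooth enough that \cref{thm: uX reg} applies and then removing the extra hypothesis by density: pick smooth $g_k$ obeying \eqref{eq: g bound} with the \emph{same} constants $M$ and $\eta$ and with $g_k\to g$ in $L_1\bigl((0,T);L_2(\Omega)\bigr)$; the corresponding solutions converge in $Y$ by the bounded linear dependence of $u_X$ on $f_X$, and the estimate, being uniform in $k$, passes to the limit.
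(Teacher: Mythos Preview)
Your proposal is correct and follows essentially the same route as the paper: apply \cref{lem: pointwise bound} to $\phi=\Mult u_X$ and to $\phi=\Mult\nabla u_X$, feed in the bounds from \cref{lem: (M uX)' estimate,lem: (M nabla uX)' estimate}, and estimate $\Q^0(f_X,t)+\Q^0\bigl((\Mult f_X)',t\bigr)\le Ct(\|u_0\|^2+M^2t^{2\eta})$ exactly as you do. You are in fact more careful than the paper in flagging the regularity needed to invoke \cref{lem: pointwise bound} (and implicitly the two derivative lemmas); the paper's proof simply applies these results without comment, so your density argument via smooth approximants of $g$ is a legitimate way to close a gap the paper leaves implicit.
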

\begin{proof}
Applying \cref{lem: pointwise bound} with~$\phi=\Mult u_X$, we see that
\cref{lem: (M uX)' estimate} gives
\begin{align*}
t^2\|u_X(t)\|^2&=\|\Mult u_X(t)\|^2
	\le Ct^{1-\alpha}\Q^\alpha_1\bigl((\Mult u_X)',t\bigr)\\
	&\le Ct\Q^0(f_X,t)+Ct\Q^0\bigl((\Mult f_X)',t\bigr).
\end{align*}
Define $g_X:[0,T]\to X$ by~$\iprod{g_X(t),v}=\iprod{g(t),v}$ for $v\in X$, and 
observe that $f_X=u_0+\FI^1g_X$ and $(\Mult f_X)'=f_X+\Mult f_X'=f_X+\Mult g_X$. 
We find using \eqref{eq: g bound} that
\begin{equation}\label{eq: hX u0 g}
\begin{aligned}
\Q^0(f_X,t)+\Q^0\bigl((\Mult f_X)',t\bigr)
&\le C\int_0^t\biggl(\|u_0\|^2+\|\FI^1g\|^2+\|\Mult g\|^2\biggr)\,ds\\
&\le Ct\bigl(\|u_0\|^2+M^2t^{2\eta}\bigr),
\end{aligned}
\end{equation}
so the estimate for the first term~$\|u_X(t)\|^2$ follows at once. Similarly,
applying \cref{lem: pointwise bound} with~$\phi=(\Mult\nabla u_X)'$
followed by \cref{lem: (M nabla uX)' estimate}, we have
\begin{align*}
t^{2+\alpha}\|\nabla u_X(t)\|&=t^\alpha\|\Mult\nabla u_X(t)\|^2
	\le Ct\Q^\alpha_1\bigl((\Mult\nabla u_X)',t\bigr)\\
	&\le Ct\Q^0(f_X,t)+Ct\Q^0\bigl((\Mult f_X)',t\bigr),
\end{align*}
implying the estimate for the second term~$t^\alpha\|\nabla u_X(t)\|^2$.
\end{proof}
\section{The weak solution}\label{sec: weak}
We will now establish that the weak formulation~\eqref{eq: weak} of the 
initial-boundary value problem
\eqref{eq: FPDE}--\eqref{eq: initial condition} is well-posed.
The proof relies on our estimates from \cref{sec: projected} and also
the following local H\"older continuity properties of~$u_X$.

\begin{lemma}\label{lem: Holder}
If $0<\delta\le t_1<t_2\le T$, then
\[
\|u_X(t_2)-u_X(t_1)\|^2
	\le C\delta^{-2}t_2\bigl(\|u_0\|^2 +M^2t_2^{2\eta}\bigr)(t_2-t_1)
\]
and
\[
\|\FI^\alpha\nabla u_X(t_2)-\FI^\alpha\nabla u_X(t_1)\|
	\le C\bigl(\|u_0\|+Mt_2^\eta\bigr)\bigl[
		\delta^{\alpha-2}(t_2-t_1)+\delta^{-\alpha/2}(t_2-t_1)^\alpha\bigr].
\]
\end{lemma}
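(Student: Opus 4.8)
The plan is to establish the two inequalities by separate arguments. For the first I would route through the rescaled function $\Mult u_X$ and its time derivative, invoking the $L_2$-in-time estimates of \cref{sec: projected}; for the second I would split the difference of the two fractional integrals into a part supported on $(t_1,t_2)$ and a part on $(0,t_1)$ carrying the kernel difference $\omega_\alpha(t_2-s)-\omega_\alpha(t_1-s)$, and estimate each using the pointwise bound of \cref{thm: uX grad uX bound}.

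For the first inequality, recall that $u_X$ is differentiable on $(0,T]$ (this is the only place a little regularity of $u_X$ is used; under \eqref{eq: reg coeff} one may invoke \cref{thm: uX reg} with $m=1$, first approximating a merely measurable $g$ by smooth source terms since all the bounds below are uniform). From $(\Mult u_X)'=u_X+\Mult u_X'$ we have $u_X'(s)=s^{-1}\bigl[(\Mult u_X)'(s)-u_X(s)\bigr]$, so writing $u_X(t_2)-u_X(t_1)=\int_{t_1}^{t_2}u_X'(s)\,ds$, bounding $s^{-1}\le\delta^{-1}$ on the interval of integration, and applying the Cauchy--Schwarz inequality in time,
\[
\|u_X(t_2)-u_X(t_1)\|^2
\le\frac{t_2-t_1}{\delta^2}\int_{t_1}^{t_2}\bigl\|(\Mult u_X)'(s)-u_X(s)\bigr\|^2\,ds
\le\frac{2(t_2-t_1)}{\delta^2}\Bigl(\Q^0\bigl((\Mult u_X)',t_2\bigr)+\Q^0(u_X,t_2)\Bigr).
\]
Now \cref{lem: (M nabla uX)' estimate} bounds $\Q^0\bigl((\Mult u_X)',t_2\bigr)$ and \cref{lem: uX estimate} bounds $\Q^0(u_X,t_2)$, in both cases by $C\Q^0(f_X,t_2)+C\Q^0\bigl((\Mult f_X)',t_2\bigr)$, and \eqref{eq: hX u0 g} estimates the latter by $Ct_2\bigl(\|u_0\|^2+M^2t_2^{2\eta}\bigr)$; assembling these gives the first claim.

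For the second inequality, since $\FI^\alpha\nabla u_X(t)=\int_0^t\omega_\alpha(t-s)\nabla u_X(s)\,ds$, splitting at $t_1$ gives
\[
\FI^\alpha\nabla u_X(t_2)-\FI^\alpha\nabla u_X(t_1)
=\underbrace{\int_{t_1}^{t_2}\omega_\alpha(t_2-s)\nabla u_X(s)\,ds}_{I_2}
+\underbrace{\int_0^{t_1}\bigl[\omega_\alpha(t_2-s)-\omega_\alpha(t_1-s)\bigr]\nabla u_X(s)\,ds}_{I_1}.
\]
\cref{thm: uX grad uX bound} gives $\|\nabla u_X(s)\|\le Cs^{-\alpha/2}\bigl(\|u_0\|+Ms^\eta\bigr)$. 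For $I_2$, where $\delta\le s\le t_2$, this is at most $C\delta^{-\alpha/2}(\|u_0\|+Mt_2^\eta)$, so, using $\int_{t_1}^{t_2}\omega_\alpha(t_2-s)\,ds=\omega_{1+\alpha}(t_2-t_1)$, we obtain $\|I_2\|\le C\delta^{-\alpha/2}(\|u_0\|+Mt_2^\eta)(t_2-t_1)^\alpha$. For $I_1$, since $\omega_\alpha$ is decreasing the integrand satisfies $|\omega_\alpha(t_2-s)-\omega_\alpha(t_1-s)|=\omega_\alpha(t_1-s)-\omega_\alpha(t_2-s)$, and I would split the $s$-integral at $t_1/2$. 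On $(t_1/2,t_1)$ one has $s\ge\delta/2$, hence $\|\nabla u_X(s)\|\le C\delta^{-\alpha/2}(\|u_0\|+Mt_2^\eta)$, while the changes of variables $r=t_1-s$ and $r=t_2-s$ yield
\[
\int_{t_1/2}^{t_1}\bigl[\omega_\alpha(t_1-s)-\omega_\alpha(t_2-s)\bigr]\,ds
=\frac{(t_1/2)^\alpha-(t_2-t_1/2)^\alpha+(t_2-t_1)^\alpha}{\Gamma(1+\alpha)}
\le\frac{(t_2-t_1)^\alpha}{\Gamma(1+\alpha)},
\]
because $t_2-t_1/2\ge t_1/2$; this contributes another term of size $C\delta^{-\alpha/2}(\|u_0\|+Mt_2^\eta)(t_2-t_1)^\alpha$. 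On $(0,t_1/2)$ one has $t-s\ge t_1/2$ for all $t\in[t_1,t_2]$, so the mean value theorem applied to $r\mapsto r^{\alpha-1}$ gives $0\le\omega_\alpha(t_1-s)-\omega_\alpha(t_2-s)\le Ct_1^{\alpha-2}(t_2-t_1)$; combined with $\int_0^{t_1/2}\|\nabla u_X(s)\|\,ds\le C(\|u_0\|+Mt_2^\eta)\int_0^{t_1/2}s^{-\alpha/2}\,ds\le Ct_1^{1-\alpha/2}(\|u_0\|+Mt_2^\eta)$ and with $t_1^{\alpha/2-1}\le C\delta^{\alpha-2}$ (valid since $\alpha-2<\alpha/2-1<0$ and $\delta\le t_1\le T$), this gives a term of size $C\delta^{\alpha-2}(\|u_0\|+Mt_2^\eta)(t_2-t_1)$. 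Adding the three contributions yields the second claim.

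The main obstacle is the estimation of $I_1$: the factor $\nabla u_X$ is weakly singular at $s=0$ (growing like $s^{-\alpha/2}$), while the kernel difference $\omega_\alpha(t_1-s)-\omega_\alpha(t_2-s)$ admits only the mean-value bound $\sim(t_1-s)^{\alpha-2}(t_2-t_1)$, which fails to be integrable near $s=t_1$, so no single crude estimate works over all of $(0,t_1)$. The split at $t_1/2$ reconciles the two behaviours: away from the diagonal one uses the mean-value bound together with the integrability of $s\mapsto s^{-\alpha/2}$ near the origin, and near the diagonal one keeps the kernel $\omega_\alpha$ itself (integrable after the change of variables) while bounding $s^{-\alpha/2}\le C\delta^{-\alpha/2}$. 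The degenerate cases $t_1=t_2$ and $t_2-t_1$ comparable to $t_1$ need no separate treatment, since both target terms are already present on the right-hand side.
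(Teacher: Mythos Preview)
Your proof is correct and follows essentially the same route as the paper: the first inequality via $u_X'=s^{-1}\bigl[(\Mult u_X)'-u_X\bigr]$, Cauchy--Schwarz in time, and \cref{lem: uX estimate,lem: (M nabla uX)' estimate} together with \eqref{eq: hX u0 g}; the second via the same three-piece decomposition (tail integral on $(t_1,t_2)$, kernel-difference integral split into a mean-value-theorem region and a near-diagonal region where $s^{-\alpha/2}$ is bounded by $C\delta^{-\alpha/2}$). The only cosmetic difference is that the paper splits the kernel-difference integral at $t_1-\delta/2$ rather than at $t_1/2$, which yields the $\delta^{\alpha-2}$ factor directly instead of via your intermediate step $t_1^{\alpha/2-1}\le C\delta^{\alpha-2}$.
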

\begin{proof}
The Cauchy--Schwarz inequality implies that
\[
\|u_X(t_2)-u_X(t_1)\|^2=\biggl\|\int_{t_1}^{t_2}u_X'(s)\,ds\biggr\|^2
	\le(t_2-t_1)\int_{t_1}^{t_2}\|u_X'(s)\|^2\,ds,
\]
and by the second inequality of~\cref{lem: uX estimate}, together 
with \cref{lem: (M nabla uX)' estimate},
\begin{align*}
\int_{t_1}^{t_2}\|u_X'(s)\|^2\,ds
	&=\int_{t_1}^{t_2}s^{-2}\|(\Mult u_X)'(s)-u_X(s)\|^2\,ds\\
	&\le2\delta^{-2}\int_0^{t_2}\bigl(\|(\Mult u_X)'\|^2+\|u_X\|^2\bigr)\,ds\\
	&=2\delta^{-2}\bigl[\Q^0\bigl((\Mult u_X)',t_2\bigr)+\Q^0(u_X,t_2)\bigr]\\
	&\le C\delta^{-2}\bigl[\Q^0\bigl(\Mult f_X)',t_2\bigr)
		+\Q^0(f_X,t_2)\bigr].
\end{align*}
The first result now follows from~\eqref{eq: hX u0 g}. To prove the second, we write
\begin{multline*}
\FI^\alpha\nabla u_X(t_2)-\FI^\alpha\nabla u_X(t_1)
	=\int_0^{t_1-\delta/2}\bigl[\omega_\alpha(t_2-s)-\omega_\alpha(t_1-s)\bigr]
	\nabla u_X(s)\,ds\\
	+\int_{t_1-\delta/2}^{t_1}\bigl[
	\omega_\alpha(t_2-s)-\omega_\alpha(t_1-s)\bigr]\nabla u_X(s)\,ds
	+\int_{t_1}^{t_2}\omega_\alpha(t_2-s)\nabla u_X(s)\,ds,
\end{multline*}
and deduce from \cref{thm: uX grad uX bound} that
\[
\|\FI^\alpha\nabla u_X(t_2)-\FI^\alpha\nabla u_X(t_1)\|
	\le C\bigl(\|u_0\|+Mt_2^\eta\bigr)\bigl(I_1+I_2+I_3),
\]
where
\begin{align*}
I_1&=\int_0^{t_1-\delta/2}\bigl[\omega_\alpha(t_1-s)-\omega_\alpha(t_2-s)\bigr]
	s^{-\alpha/2}\,ds,\\
I_2&=\int_{t_1-\delta/2}^{t_1}
\bigl[\omega_\alpha(t_1-s)-\omega_\alpha(t_2-s)\bigr]s^{-\alpha/2}\,ds,\\
I_3&=\int_{t_1}^{t_2}\omega_\alpha(t_2-s)s^{-\alpha/2}\,ds.
\end{align*}
By the mean value theorem,
\[
\omega_\alpha(t_1-s)-\omega_\alpha(t_2-s)
	=(t_2-t_1)|\omega_{\alpha-1}(\xi)|\quad\text{with $t_1-s<\xi<t_2-s$,}
\]
and if $0<s<t_1-\delta/2$ then $t_1-s>\delta/2$ so
\begin{align*}
I_1&\le(t_2-t_1)|\omega_{\alpha-1}(\delta/2)|
	\int_0^{t_1-\delta/2}\frac{ds}{s^{\alpha/2}}\\
	&\le\biggl(\frac{2}{\delta}\biggr)^{2-\alpha}\,\frac{1-\alpha}{1-\alpha/2}\,
	\frac{(t_1-\delta/2)^{1-\alpha/2}}{\Gamma(\alpha)}\,(t_2-t_1).
\end{align*}
Moreover,
\begin{align*}
I_2&\le(\delta/2)^{-\alpha/2}\int_{t_1-\delta/2}^{t_1}
	\bigl[\omega_\alpha(t_1-s)-\omega_\alpha(t_2-s)\bigr]\,ds\\
	&=(2/\delta)^{\alpha/2}\bigl[
	\omega_{\alpha+1}(t_2-t_1)+\omega_{\alpha+1}(\delta/2)
		-\omega_{\alpha+1}(t_2-t_1+\delta/2)\bigr]\\
	&\le(2/\delta)^{\alpha/2}\omega_{\alpha+1}(t_2-t_1)
\end{align*}
and $I_3\le\delta^{-\alpha/2}\int_{t_1}^{t_2}\omega_\alpha(t_2-s)\,ds
=\delta^{-\alpha/2}\omega_{\alpha+1}(t_2-t_1)$.
\end{proof}

Our existence theorem is stated as follows.  Note the weak continuity
at~$t=0$ asserted in part~5; we show in the companion 
paper~\cite{McLeanEtAl2019} that the solution~$u$ is continuous on 
the closed interval~$[0,T]$ provided $u_0\in\dot H^\mu(\Omega)$ for 
some~$\mu>0$.

\begin{theorem}\label{thm: existence}
Assume that the coefficients satisfy~\eqref{eq: reg coeff}, 
that the source term satisfies \eqref{eq: g bound}, and that the initial data
$u_0\in L_2(\Omega)$. Then, the initial-boundary value
problem~\eqref{eq: FPDE}--\eqref{eq: initial condition} has a weak solution~$u$.
More precisely, there exists a function $u:[0,T]\to L_2(\Omega)$ with the
following properties.
\begin{enumerate}
\item The restriction~$u:(0,T]\to L_2(\Omega)$ is continuous.
\item If $0<t\le T$, then
$u(t)\in H^1_0(\Omega)$ with 
\[
\|u(t)\|+t^{\alpha/2}\|\nabla u(t)\|
\le C\bigl(\|u_0\|+Mt^\eta\bigr).
\]
\item The functions $\FI^\alpha u$~and $B_2u$ are continuous from the closed
interval~$[0,T]$ to $L_2(\Omega)$. Likewise, $\FI^\alpha\nabla u$~and $\vec
B_1u$ are continuous from~$[0,T]$ to~$L_2(\Omega)^d$.
\item At $t=0$ we have $\FI^\alpha u=B_2u=0$, $\FI^\alpha\nabla u=\vec B_1u=0$
and $u(0)=u_0$.
\item If $t\to0$, then $\iprod{u(t),v}\to\iprod{u(0),v}$ for each 
$v\in L_2(\Omega)$. 
\item For $0\le t\le T$~and $v\in H^1_0(\Omega)$,
\begin{equation}\label{eq: weak u}
\iprod{u(t),v}+\bigiprod{\kappa(\FI^\alpha\nabla u)(t),\nabla v}
    -\bigiprod{(\vec B_1u)(t),\nabla v}+\iprod{(B_2u)(t),v}=\iprod{f(t),v}.
\end{equation}
\end{enumerate}
\end{theorem}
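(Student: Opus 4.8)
The plan is to carry out a Galerkin limit. Fix an increasing sequence of finite-dimensional subspaces $X_1\subseteq X_2\subseteq\cdots$ of $H^1_0(\Omega)$ with $\overline{\bigcup_nX_n}=H^1_0(\Omega)$, let $u_n$ be the solution of the projected Volterra equation~\eqref{eq: Volterra eqn X} with $X=X_n$ (which exists and is unique by \cref{thm: uX exist unique}), and let $f_n$ be the $L_2$-projection of $f$ onto $X_n$; then each $u_n$ satisfies~\eqref{eq: weak uX} for every $v\in X_n$, with $\vec B_1=B^\alpha_{\vec F}+B^1_{\vec G}$ and $B_2=B^\alpha_a+B^1_b$ as in~\eqref{eq: B1 B2}, together with the uniform bound of \cref{thm: uX grad uX bound} and the local H\"older estimates of \cref{lem: Holder}; note also that $\iprod{f_n(t),v}=\iprod{f(t),v}$ whenever $v\in X_m$ and $n\ge m$, and that $u_n(0)\to u_0$ in $L_2(\Omega)$. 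To extract a limit, fix $\delta>0$: by \cref{thm: uX grad uX bound} the family $\{u_n\}$ is bounded in $L_\infty\bigl((\delta,T);H^1_0(\Omega)\bigr)$, so by the Rellich compactness of $H^1_0(\Omega)\hookrightarrow L_2(\Omega)$ the set $\{u_n(t):n\ge1,\ \delta\le t\le T\}$ is precompact in $L_2(\Omega)$; since $\{u_n\}$ is equicontinuous from $[\delta,T]$ into $L_2(\Omega)$ by the first estimate of \cref{lem: Holder}, Arzel\`a--Ascoli gives a subsequence converging in $C\bigl([\delta,T];L_2(\Omega)\bigr)$, and a diagonal argument over $\delta=1/k$ yields one subsequence (not relabelled) and a continuous $u:(0,T]\to L_2(\Omega)$ with $u_n\to u$ in $C\bigl([\delta,T];L_2(\Omega)\bigr)$ for every $\delta>0$ --- this is part~1. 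For each fixed $t\in(0,T]$ the sequence $\{u_n(t)\}$ is bounded in $H^1_0(\Omega)$ and converges to $u(t)$ in $L_2(\Omega)$, so (any weak-$H^1_0$ limit of a sub-subsequence must, by Rellich, equal $u(t)$) we have $u_n(t)\weakto u(t)$ in $H^1_0(\Omega)$; hence $u(t)\in H^1_0(\Omega)$, and weak lower semicontinuity of $\|\nabla\,\cdot\,\|$ together with $\|u_n(t)\|\to\|u(t)\|$ upgrades the bound of \cref{thm: uX grad uX bound} to part~2.

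Next I would pass to the limit in~\eqref{eq: weak uX}. Fix $m$ and $v\in X_m$; the identity holds for $u_n$ with this $v$ whenever $n\ge m$. The first term satisfies $\iprod{u_n(t),v}\to\iprod{u(t),v}$, and for the others I use that, along the chosen subsequence, $u_n(s)\to u(s)$ in $L_2(\Omega)$ and $u_n(s)\weakto u(s)$ in $H^1_0(\Omega)$ for \emph{every} $s\in(0,T]$, with $\|u_n(s)\|+s^{\alpha/2}\|\nabla u_n(s)\|\le C(\|u_0\|+Ms^\eta)$ uniformly in $n$. Writing the diffusion term as $\int_0^t\omega_\alpha(t-s)\iprod{\kappa\nabla u_n(s),\nabla v}\,ds$, the integrand converges pointwise and is dominated by $C\omega_\alpha(t-s)s^{-\alpha/2}\|\kappa\nabla v\|\in L_1(0,t)$, so dominated convergence gives the limit $\iprod{\kappa\FI^\alpha\nabla u(t),\nabla v}$; moreover $\FI^\alpha u_n(t)\to\FI^\alpha u(t)$ and $\FI^1u_n(t)\to\FI^1u(t)$ in $L_2(\Omega)$ by dominated convergence, so, using the representation~\eqref{eq: B1 B2} and the bounded coefficients in~\eqref{eq: reg coeff}, each of the two terms of $\iprod{(\vec B_1u_n)(t),\nabla v}$ and of $\iprod{(B_2u_n)(t),v}$ converges (the pointwise-in-$t$ product term directly, the time-integral term again by dominated convergence). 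Since $\iprod{f_n(t),v}=\iprod{f(t),v}$ for $n\ge m$, this gives~\eqref{eq: weak u} for all $v\in\bigcup_mX_m$; every term there is a bounded linear functional of $v\in H^1_0(\Omega)$ (by part~2 and the same bounds), so~\eqref{eq: weak u} extends to all $v\in H^1_0(\Omega)$, which is part~6.

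The remaining parts follow from part~2 and~\eqref{eq: weak u}. Because $\|u(s)\|\le C(\|u_0\|+Ms^\eta)$ is bounded on $[0,T]$, $\|\FI^\alpha u(t)\|\le C\omega_{1+\alpha}(t)\to0$ as $t\to0$, and a routine splitting of the integral using the continuity of $u$ on $(0,T]$ shows $\FI^\alpha u\in C\bigl([0,T];L_2(\Omega)\bigr)$ with $\FI^\alpha u(0)=0$; since $\|\nabla u(s)\|\le Cs^{-\alpha/2}(\|u_0\|+Ms^\eta)$, a Beta-function estimate gives $\|\FI^\alpha\nabla u(t)\|\le Ct^{\alpha/2}\to0$, and similarly $\FI^\alpha\nabla u\in C\bigl([0,T];L_2(\Omega)^d\bigr)$ vanishes at $t=0$. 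Writing $B^\mu_\psi u(t)=\psi(t)\FI^\mu u(t)-\int_0^t\psi'(s)\FI^\mu u(s)\,ds$, where $\psi,\psi'$ are bounded and continuous in $t$ by~\eqref{eq: reg coeff}, both summands of $\vec B_1u$ and of $B_2u$ are continuous on $[0,T]$ and vanish at $t=0$; this yields parts~3 and~4, once we \emph{define} $u(0):=u_0$, which is consistent with~\eqref{eq: weak u} evaluated at $t=0$, where the three integral terms vanish. For part~5, \eqref{eq: weak u} gives $\iprod{u(t),v}=\iprod{f(t),v}-\iprod{\kappa\FI^\alpha\nabla u(t),\nabla v}+\iprod{(\vec B_1u)(t),\nabla v}-\iprod{(B_2u)(t),v}$; as $t\to0$ the right-hand side tends to $\iprod{u_0,v}$ for every $v\in H^1_0(\Omega)$, since $f(t)=u_0+\FI^1g(t)\to u_0$ in $L_2(\Omega)$ by~\eqref{eq: g bound} while the other terms tend to $0$ by part~3, and since $\{u(t):0<t\le T\}$ is bounded in $L_2(\Omega)$ and $H^1_0(\Omega)$ is dense in $L_2(\Omega)$, this extends to all $v\in L_2(\Omega)$.

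The step I expect to be the main obstacle is the compactness extraction together with the limit passage in the weakly singular terms: equicontinuity alone does not give a convergent subsequence in the infinite-dimensional space $L_2(\Omega)$, so one must couple the uniform $H^1_0$-bound of \cref{thm: uX grad uX bound} (to obtain Rellich precompactness at each time level) with the equicontinuity of \cref{lem: Holder} and a diagonal argument over shrinking $\delta$; and one must check that the subsequence can be chosen so that $u_n(s)\weakto u(s)$ in $H^1_0(\Omega)$ for \emph{every} $s\in(0,T]$, and that this --- combined with the merely integrable singularity $s^{-\alpha/2}$ in the bound of part~2 --- is enough to let dominated convergence handle the terms $\FI^\alpha\nabla u_n$, $\vec B_1u_n$ and $B_2u_n$.
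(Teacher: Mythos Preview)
Your proof is correct and follows the same Galerkin--compactness strategy as the paper: set up nested subspaces, invoke \cref{thm: uX grad uX bound} and the first estimate of \cref{lem: Holder}, extract a subsequence via Arzel\`a--Ascoli with a diagonal argument, and pass to the limit in~\eqref{eq: weak uX}. The overall architecture is the same.

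There is, however, a genuine technical difference in how the diffusion term and the continuity of $\FI^\alpha\nabla u$ are handled. The paper uses the \emph{second} H\"older estimate of \cref{lem: Holder} (for $\FI^\alpha\nabla u_X$) to assert that the family $\{\FI^\alpha\nabla u_n\}$ is bounded and equicontinuous in $C\bigl([\delta,T];L_2(\Omega)^d\bigr)$, and appeals to this for the continuity of $\FI^\alpha\nabla u$. You bypass that estimate entirely: you observe that $u_n(s)\to u(s)$ in $L_2(\Omega)$ plus the uniform $H^1_0$ bound forces $u_n(s)\weakto u(s)$ in $H^1_0(\Omega)$ for every $s\in(0,T]$, and then treat $\bigiprod{\kappa\FI^\alpha\nabla u_n(t),\nabla v}=\int_0^t\omega_\alpha(t-s)\iprod{\kappa\nabla u_n(s),\nabla v}\,ds$ by dominated convergence with the integrable majorant $C\omega_\alpha(t-s)s^{-\alpha/2}$; continuity of $\FI^\alpha\nabla u$ you obtain directly from the bound in part~2 via the same splitting argument that underlies \cref{lem: Holder}. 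This is a modest but real simplification: it avoids the second half of \cref{lem: Holder} and sidesteps the delicate point that Arzel\`a--Ascoli in $C\bigl([\delta,T];L_2(\Omega)^d\bigr)$ requires pointwise precompactness (which you correctly supply via Rellich for $u_n$ itself, but which is not obviously available for $\FI^\alpha\nabla u_n$). The price is that you must verify weak $H^1_0$ convergence holds at \emph{every} $s$ along the already-chosen subsequence, which you do correctly via the sub-subsequence argument.
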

\begin{proof}
Let $\psi_1$, $\psi_2$, $\psi_3$, \dots be a sequence of functions spanning a
dense subspace of~$H^1_0(\Omega)$. For each integer~$n\ge1$, let
$X_n=\vecspan\{\psi_1,\psi_2,\ldots,\psi_n\}$ and for brevity denote the
solution of~\eqref{eq: weak uX} with~$X=X_n$ by~$u_n=u_X$, and likewise write
$f_n=f_X$, so that
\begin{equation}\label{eq: weak un}
\iprod{u_n(t),v}+\iprod{\kappa(\FI^\alpha\nabla u_n)(t),\nabla v}
    -\iprod{(B_1u_n)(t),\nabla v}+\iprod{(B_2u_n)(t),v}=\iprod{f_n(t),v}
\end{equation}
for $v\in X_n$ and $0<t\le T$. We see from
\cref{thm: uX grad uX bound}~and \cref{lem: Holder} that, 
whenever~$0<\delta<T$, the sequence of functions~$u_n$ is bounded and 
equicontinuous in~$C\bigl([\delta,T];L_2(\Omega)\bigr)$. By
choosing a sequence of values of~$\delta$ tending to zero we can select a
subsequence, again denoted by~$u_n$, such that $u_n(t)$ converges
in~$L_2(\Omega)$ for~$0<t\le T$.  We may therefore define
\[
u(t)=\lim_{n\to\infty}u_n(t)\quad\text{for $0<t\le T$,}
\]
and this function satisfies property~1 because, given any 
fixed~$\delta\in(0,T)$, the limit is uniform for~$t\in[\delta,T]$.
Similarly, the functions~$\FI^\alpha\nabla u_n$ are bounded and equicontinuous
in~$C\bigl([\delta,T];L_2(\Omega)^d\bigr)$ so
$\FI^\alpha\nabla u:(0,T]\to L_2(\Omega)^d$ is continuous.  In fact, it will
follow from~\eqref{eq: FI grad u(t) estimate} below that
$\|\FI^\alpha\nabla u(t)\|\to0$ as~$t\to0$, so 
$\FI^\alpha\nabla u:[0,T]\to L_2(\Omega)^d$ is continuous.

By \cref{thm: uX grad uX bound},
\[
\|u_n(t)\|\le C\bigl(\|u_0\|+Mt^\eta\bigr)
	\quad\text{for $0<t\le T$,}
\]
so by sending $n\to\infty$ we conclude that
$\|u(t)\|\le C\bigl(\|u_0\|+Mt^\eta\bigr)$.  Also, for $0<t\le T$,
\[
|\iprod{u_n(t),v}|\le C\|u_n(t)\|_{H^1_0(\Omega)}\|v\|_{H^{-1}(\Omega)}
	\le Ct^{-\alpha/2}\bigl(\|u_0\|+Mt^\eta\bigr)\|v\|_{H^{-1}(\Omega)}
\]
and sending $n\to\infty$ it follows that
\[
|\iprod{u(t),v}|
  \le Ct^{-\alpha/2}\bigl(\|u_0\|+Mt^\eta\bigr)\|v\|_{H^{-1}(\Omega)}
  \quad\text{for all $v\in L_2(\Omega)$,}
\]
so $u(t)\in H^1_0(\Omega)$ with
$\|u(t)\|_{H^1_0(\Omega)}\le Ct^{-\alpha/2}(\|u_0\|+Mt^\eta\bigr)$,
establishing property~2.

Since $\|u(t)\|$ is bounded, $\FI^\alpha u$ is continuous on~$[0,T]$ with
\begin{equation}\label{eq: FI u(t) estimate}
\begin{aligned}
\|\FI^\alpha u(t)\|&\le\int_0^t\omega_\alpha(t-s)\|u(s)\|\,ds\\
	&\le C\int_0^t(t-s)^{\alpha-1}\bigl(\|u_0\|+Ms^\eta\bigr)\,ds
	\le C\bigl(\|u_0\|+Mt^\eta\bigr)t^\alpha,
\end{aligned}
\end{equation}
and similarly
\begin{equation}\label{eq: FI grad u(t) estimate}
\|\FI^\alpha\nabla u(t)\|
	\le C\int_0^t(t-s)^{\alpha-1}s^{-\alpha/2}
        \bigl(\|u_0\|+Ms^\eta\bigr)\,ds
	\le C(\|u_0\|+Mt^\eta\bigr)t^{\alpha/2}.
\end{equation}
Likewise, for $n\ge1$,
\begin{equation}\label{eq: FI un(t) grad un(t) estimate}
\|\FI^\alpha u_n(t)\|\le C\bigl(\|u_0\|+Mt^\eta\bigr)t^\alpha
\quad\text{and}\quad
\|\FI^\alpha\nabla u_n(t)\|\le C\bigl(\|u_0\|+Mt^\eta\bigr)t^{\alpha/2}.
\end{equation}
Continuity of $\vec B_1u$~and $B_2u$ follow from \eqref{eq: Yd B}~and
\eqref{eq: B1 B2}, completing the proof of property~3, with
\begin{equation}\label{eq: B1 B2 u}
\begin{aligned}
\|(\vec B_1u)(t)\|+\|(B_2u)(t)\|&\le C\|(\FI^\alpha u)(t)\|
	+C\int_0^t\bigl(\|(\FI^\alpha u)(s)\|+\|u(s)\|\bigr)\,ds\\
	&\le C\bigl(\|u_0\|+M\bigr)t^\alpha.
\end{aligned}
\end{equation}
Property~4 follows from the estimates \eqref{eq: FI u(t) estimate},
\eqref{eq: FI grad u(t) estimate}~and \eqref{eq: B1 B2 u}.

If $0\le\delta<t\le T$, then
\begin{align*}
\|&(\FI^\alpha u_n)(t)-(\FI^\alpha u)(t)\|\le\int_0^t\omega_\alpha(t-s)
    \|u_n(s)-u(s)\|\,ds\\
    &\le C\int_0^\delta(t-s)^{\alpha-1}\bigl(\|u_0\|+Ms^\eta\bigr)\,ds
    +\int_\delta^t(t-s)^{\alpha-1}\|u_n(s)-u(s)\|\,ds\\
    &\le C\delta^\alpha\bigl(\|u_0\|+M\delta^\eta\bigr)
        +\alpha^{-1}(t-\delta)^\alpha\max_{\delta\le s\le t}\|u_n(s)-u(s)\|,
\end{align*}
showing that $\FI^\alpha u_n(t)\to\FI^\alpha u(t)$ in~$L_2(\Omega)$, uniformly
for~$t\in[\delta,T]$. In fact, the convergence is uniform for~$t\in[0,T]$,
owing to the estimates \eqref{eq: FI u(t) estimate}~and
\eqref{eq: FI un(t) grad un(t) estimate}. Therefore, we see using
\eqref{eq: Yd B}~and \eqref{eq: B1 B2} that, for $v\in H^1_0(\Omega)$,
\[
\bigiprod{(\vec B_1u_n)(t),\nabla v}\to\bigiprod{(\vec B_1u)(t),\nabla v}
\quad\text{and}\quad
\bigiprod{(B_2u_n)(t),v}\to\bigiprod{(B_2u)(t),v}.
\]
Since $\iprod{f_n,\psi_j}=\iprod{f,\psi_j}$
for~$j\le n$, we have
\[
\lim_{n\to\infty}\iprod{f_n(t),\psi_j}=\iprod{f(t),\psi_j}
	\quad\text{for all $j\ge1$ and $0\le t\le T$,}
\]
and therefore $\iprod{f_n(t),v}\to\iprod{f(t),v}$ for all $v\in L_2(\Omega)$.
Thus, by sending $n\to\infty$ in~\eqref{eq: weak un}, it follows that
\eqref{eq: weak u} holds for $v\in H^1_0(\Omega)$ and $0<t\le T$.  In light of
\eqref{eq: B1 B2 u}~and \eqref{eq: FI grad u(t) estimate}, the variational
equation~\eqref{eq: weak u} is satisfied when~$t=0$ if and only if
$\iprod{u(0),v}=\iprod{u_0,v}$ for all $v\in H^1_0(\Omega)$, which is the case
if and only if we define $u(0)=u_0$.  Moreover, if $t\to0$ then
$\iprod{u(t),v}\to\iprod{f(0),v}=\iprod{u_0,v}$, for each $v\in H_0^1(\Omega)$, 
and hence by density for each $v\in L_2(\Omega)$, establishing properties 5~and 
6.
\end{proof}

\begin{remark}
Since our estimates rely on \cref{lem: alpha dep}, the constant~$C$ in part~2 
of \cref{thm: existence} becomes unbounded as~$\alpha\to1$.  However, this 
behavior appears to be an artifact of our method of proof. In the 
limiting case when~$\alpha=1$ and \eqref{eq: FPDE} reduces to a parabolic PDE, 
a simple energy argument combined with the classical Gronwall inequality
yields the \emph{a priori} estimate
\[
\|u(t)\|\le C\biggl(\|u_0\|+\int_0^t\|g(s)\|\,ds\biggr)
	\quad\text{for $0\le t\le T$;}
\]
see also the alternative analysis~\cite{LeMcLeanStynes2018} of the fractional 
Fokker--Planck equation.
\end{remark}

\begin{theorem}\label{thm: uniqueness}
The weak solution of the initial-boundary value
problem~\eqref{eq: FPDE}--\eqref{eq: initial condition} is unique.  More
precisely, under the same assumptions as \cref{thm: existence}, there is
at most one function~$u$ that satisfies \eqref{eq: weak u}
and is such that $u$~and $\FI^\alpha u$ belong
to~$L_2\bigl((0,T);L_2(\Omega)\bigr)$, and $\FI^\alpha\nabla u$ belongs
to~$L_2\bigl((0,T);L_2(\Omega)^d\bigr)$.
\end{theorem}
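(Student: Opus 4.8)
The plan is to run, at the level of the continuous problem and with vanishing data, the same kind of energy argument that produced the a priori bounds of \cref{lem: uX estimate}, and then to convert the resulting vanishing of a quadratic functional into $w\equiv0$ by means of the Plancherel identity \eqref{eq: Q1 Plancherel}. Let $u_1$ and $u_2$ be two functions in the stated class and set $w=u_1-u_2$ (which is real-valued, since $u_0$ and $g$ are). Subtracting \eqref{eq: weak u} for $u_2$ from the same equation for $u_1$ and using linearity — the operators $\vec B_1=B^\alpha_{\vec F}+B^1_{\vec G}$ and $B_2=B^\alpha_a+B^1_b$ of \eqref{eq: B1 B2} are well defined on $w$ because $w$ and $\FI^\alpha w$ lie in $L_2\bigl((0,T);L_2(\Omega)\bigr)$ — one finds that $w$ satisfies the homogeneous identity
\[
\iprod{w(t),v}+\bigiprod{\kappa(\FI^\alpha\nabla w)(t),\nabla v}
-\bigiprod{(\vec B_1 w)(t),\nabla v}+\iprod{(B_2 w)(t),v}=0
\]
for all $v\in H^1_0(\Omega)$ and $0\le t\le T$.

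The key step is to take $v=(\FI^\alpha w)(t)$, which is a legitimate element of $H^1_0(\Omega)$ for a.e.\ $t$ under the regularity hypothesis on $\FI^\alpha\nabla u$, and to integrate the resulting scalar identity over $(0,t)$; every term is integrable in time because $w$, $\FI^\alpha w$ and $\FI^\alpha\nabla w$ are square-integrable in time, and hence so are $\vec B_1 w$ and $B_2 w$. The coercivity condition \eqref{eq: lambda min} gives $\int_0^t\iprod{\kappa\nabla\FI^\alpha w,\nabla\FI^\alpha w}\,ds\ge\Q^\alpha_2(\nabla w,t)$, so after applying Cauchy--Schwarz and Young's inequality to the two remaining terms and absorbing the factors of $\Q^\alpha_2(\nabla w,t)$ and $\Q^\alpha_2(w,t)$ so created, we are left with $\Q^\alpha_1(w,t)\le C\Q^0(\vec B_1 w,t)+C\Q^0(B_2 w,t)+C\Q^\alpha_2(w,t)$. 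Estimate \eqref{eq: B1 estimate int} together with \cref{lem: E} (with $\mu=\alpha$, $\nu=1$) bounds $\Q^0(\vec B_1 w,t)+\Q^0(B_2 w,t)$ by $C\Q^\alpha_2(w,t)$, so in fact $\Q^\alpha_1(w,t)\le C\Q^\alpha_2(w,t)$, and \cref{lem: D} turns this into $\Q^\alpha_1(w,t)\le C\int_0^t\omega_\alpha(t-s)\Q^\alpha_1(w,s)\,ds$. Since $\mathsf{q}(t)=\Q^\alpha_1(w,t)$ is non-negative and integrable, \cref{lem: Gronwall} with $\mathsf{a}\equiv0$ forces $\Q^\alpha_1(w,t)=0$ for $0\le t\le T$. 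Taking $t=T$ in \eqref{eq: Q1 Plancherel}, and using $\cos(\pi\alpha/2)>0$ because $0<\alpha<1$, shows that the entire function $y\mapsto\hat w(iy)$ vanishes identically, whence $w=0$ by analytic continuation and injectivity of the Laplace transform; that is, $u_1=u_2$.

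I expect the only genuinely delicate point to be the admissibility of the test function $v=(\FI^\alpha w)(t)$: one must read the assumption ``$\FI^\alpha\nabla u\in L_2\bigl((0,T);L_2(\Omega)^d\bigr)$'' as asserting that $t\mapsto\FI^\alpha u(t)$ takes values in $H^1_0(\Omega)$ (so that its distributional spatial gradient is precisely what the notation $\FI^\alpha\nabla u$ denotes) — which is exactly the property enjoyed by the weak solution constructed in \cref{thm: existence}. Once that is granted, the rest is a routine reprise of \cref{lem: uX estimate} with zero data, and the passage from $\Q^\alpha_1(w,\cdot)\equiv0$ to $w\equiv0$ is exactly where the strict inequality $\alpha<1$, through the positive factor $\cos(\pi\alpha/2)$ in \eqref{eq: Q1 Plancherel}, is essential; this is why the parabolic endpoint $\alpha=1$ has to be handled separately, as in the Remark following \cref{thm: existence}.
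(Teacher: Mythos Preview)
Your proposal is correct and follows essentially the same route as the paper: test the homogeneous identity with $v=\FI^\alpha w(t)$, use coercivity together with \eqref{eq: B1 estimate int} and \cref{lem: E} to reduce to $\Q^\alpha_1(w,t)\le C\Q^\alpha_2(w,t)$, close via \cref{lem: D} and \cref{lem: Gronwall}, and conclude $w\equiv0$ from \eqref{eq: Q1 Plancherel}. The only cosmetic difference is that the paper keeps $\Q^\alpha_2(\nabla u,t)$ in the Gronwall quantity~$\mathsf{q}$, whereas you drop it and track $\Q^\alpha_1(w,t)$ alone; your remark on the admissibility of the test function $\FI^\alpha w(t)\in H^1_0(\Omega)$ is a point the paper leaves implicit.
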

\begin{proof}
The problem is linear, so it suffices to show that if $u_0=0$~and
$g(t)\equiv0$ then $u(t)\equiv0$.  Thus, suppose that
\[
\iprod{u(t),v}+\bigiprod{\kappa(\FI^\alpha\nabla u)(t),\nabla v}
    -\bigiprod{(\vec B_1u)(t),\nabla v}+\iprod{(B_2u)(t),v}=0
\]
for $0<t\le T$ and $v\in H^1_0(\Omega)$.  Proceeding as in the proof 
of~\eqref{eq: uX nabla uX}, we have
\begin{multline*}
\Q^\alpha_1(u,t)+\tfrac12\Q^\alpha_2(\nabla u,t)
	\le\tfrac12\Q^0(\vec B_1u,t)+\tfrac12\Q^0(B_2u,t)+\tfrac12\Q^\alpha_2(u,t)\\
	\le C\Q^\alpha_2(u,t),
\end{multline*}
where the final step used \eqref{eq: Yd B}, \eqref{eq: B1 estimate int}~and
\cref{lem: E}.  Thus, applying \cref{lem: D}, the function
$\mathsf{q}(t)=\Q^\alpha_1(u,t)+\Q^\alpha_2(\nabla u,t)$ satisfies
\[
\mathsf{q}(t)\le C\Q^\alpha_2(\nabla u,t)
  \le C\int_0^t\omega_\alpha(t-s)\mathsf{q}(s)\,ds,
\]
and therefore $\mathsf{q}(t)=0$ for~$0\le t\le T$ by \cref{lem: Gronwall}.  In
particular, $\Q^\alpha_1(u,T)=0$, so if we put $u(t)=0$ for~$t>T$ then the 
Laplace transform of~$u$ satisfies $\hat u(iy)=0$ for~$-\infty<y<\infty$ 
by~\eqref{eq: Q1 Plancherel},
implying that $u(t)=0$ for~$0\le t\le T$.
\end{proof}
\section*{Acknowledgements}
The authors thank the University of New South Wales (Faculty 
Research Grant ``Efficient numerical simulation of anomalous transport phenomena''),
the King Fahd University of Petroleum and Minerals (project No.~KAUST005) and
the King Abdullah University of Science and Technology.
\renewcommand{\bibliofont}{\normalfont\normalsize}
\bibliographystyle{plain}
\bibliography{McLeanMustaphaAliKnio_refs}

\begin{thebibliography}{10}

\bibitem{Alikhanov2010}
A.~A. Alikhanov.
\newblock A priori estimates for solutions of boundary value problems for
  fractional-order equations.
\newblock {\em Differential Equations}, 46:660--666, 2010.

\bibitem{Alikhanov2012}
Anatoly~A. Alikhanov.
\newblock Boundary value problems for the diffusion equation of the variable
  order in differential and difference settings.
\newblock {\em Applied Mathematics and Computation}, 219:3938--3946, 2012.

\bibitem{AngstmannEtAl2017}
C.~N. Angstmann, B.~I. Henry, B.~A. Jacobs, and A.~V. McGann.
\newblock A time-fractional generalised advection equation from a stochastic
  process.
\newblock {\em Chaos, Solitons and Fractals}, 102:175--183, 2017.
\newblock Future Directions in Fractional Calculus Research and Applications.

\bibitem{Becker2011}
Leigh~C. Becker.
\newblock Resolvents and solutions of weakly singular linear {V}olterra
  integral equations.
\newblock {\em Nonlinear Analysis}, 74:1892--1912, 2011.

\bibitem{Brunner2017}
Hermann Brunner.
\newblock {\em Volterra Integral Equations: an Introduction to Theory and
  Applications}.
\newblock Cambridge University Press, 2017.

\bibitem{BrunnerEtAl1999}
Hermann Brunner, Arvet Pedas, and Gennadi Vainikko.
\newblock The piecewise polynomial collocation method for nonlinear weakly
  singular {V}olterra equations.
\newblock {\em Math. Comp.}, 68:1079--1095, 1999.

\bibitem{CuestaLubichPalencia2006}
Eduardo Cuesta, Christian Lubich, and Cesar Palencia.
\newblock Convolution quadrature time discretization of fractional
  diffusive-wave equations.
\newblock {\em Math. Comp.}, 75:673--696, 2006.

\bibitem{DixonMcKee1986}
J.~Dixon and S.~McKee.
\newblock Weakly singular {G}ronwall inequalities.
\newblock {\em ZAMM Z. Angew. Math. Mech.}, 66:535--544, 1986.

\bibitem{HenryLanglandsStraka2010}
B.~I. Henry, T.~A.~M. Langlands, and P.~Straka.
\newblock Fractional {F}okker--{P}lanck equations for subdiffusion with space-
  and time-dependent forces.
\newblock {\em Phys. Rev. Lett.}, 105:170602, 2010.

\bibitem{HenryLanglandWearne2006}
B.~I. Henry, T.~A.~M. Langlands, and S.~L. Wearne.
\newblock Anomalous diffusion with linear reaction dynamics: From continuous
  time random walks to fractional reaction-diffusion equations.
\newblock {\em Phys. Rev. E}, 74:031116, Sep 2006.

\bibitem{HenryWearne2000}
B.~I. Henry and S.~L. Wearne.
\newblock Fractional reaction-diffusion.
\newblock {\em Physica A: Statistical Mechanics and its Applications},
  276:448--455, 2000.

\bibitem{JinLiZhou2018}
Bangti Jin, Buyang Li, and Zhi Zhou.
\newblock Discrete maximal regularity of time-stepping schemes for fractional
  evolution equations.
\newblock {\em Numer. Math.}, 138:101--131, 2018.

\bibitem{KaraaPani2018}
Samir Karaa and Amiya~K. Pani.
\newblock Error analysis of a {FVEM} for fractional order evolution equations
  with nonsmooth initial data.
\newblock {\em ESAIM: M2AN}, 52(2):773--801, 2018.

\bibitem{KlafterSokolov2011}
J.~Klafter and I.~M. Sokolov.
\newblock {\em First Steps in Random Walks}.
\newblock Oxford University Press, 2011.

\bibitem{KubicaYamamoto2018}
Adam Kubica and Masahiro Yamamoto.
\newblock Initial-boundary value problems for fractional diffusion equations
  with time-dependent coefficients.
\newblock {\em Fract. Calc. Appl. Anal.}, 21:276--311, 2018.

\bibitem{LanglandsHenry2005}
T.~A.~M. Langlands and B.~I. Henry.
\newblock The accuracy and stability of an implicit solution method for the
  fractional diffusion equation.
\newblock {\em Journal of Computational Physics}, 205(2):719--736, 2005.

\bibitem{LanglandsHenryWearne2011}
T.~A.~M. Langlands, B.~I. Henry, and S.~L. Wearne.
\newblock Fractional cable equation models for anomalous electrodiffusion in
  nerve cells.
\newblock {\em SIAM J. Appl. Math.}, 71:1168--1203, 2011.

\bibitem{LeMcLeanMustapha2018}
{Kim Ngan} Le, William McLean, and Kassem Mustapha.
\newblock A semidiscrete finite element approximation of a time-fractional
  {F}okker--{P}lanck equation with non-smooth initial data.
\newblock {\em SIAM J. Sci. Computing}.
\newblock To appear.

\bibitem{LeMcLeanStynes2018}
{Kim Ngan} Le, William McLean, and Martin Stynes.
\newblock Existence, uniqueness and regularity of the solution of the
  time-fractional {F}okker--{P}lanck equation with general forcing.
\newblock {\em Comm. Pure Appl. Anal.}, 2018.
\newblock to appear.

\bibitem{LiaoLiZhang2018}
Hong-Lin Liao, Dongfang Li, and Jiwei Zhang.
\newblock Sharp error estimate of the nonuniform l1 formula for linear
  reaction-subdiffusion equations.
\newblock {\em SIAM J. Numer. Anal.}, 66(2):1112--1133, 2018.

\bibitem{LinXu2007}
Yumin Lin and Chuanju Xu.
\newblock Finite difference/spectral approximations for the time-fractional
  diffusion equation.
\newblock {\em Journal of Computational Physics}, 225(2):1533--1552, 007.

\bibitem{Linz1985}
Peter Linz.
\newblock {\em Analytical and Numerical Methods for Volterra Equations}.
\newblock Studies in Applied and Numerical Mathematics. SIAM, Philadelphia,
  1985.

\bibitem{LiuAnhTurnerZhuang2003}
F.~Liu, V.~V. Anh, I.~Turner, and P.~Zhuang.
\newblock Time fractional advection-dispersion equation.
\newblock {\em J. Appl. Math. Computing}, 13:233--245, 2003.

\bibitem{Lubich1983}
Ch. Lubich.
\newblock Runge--{K}utta theory for {V}olterra and {A}bel integral equations of
  the second kind.
\newblock {\em Math. Comp.}, 41:87--102, 1983.

\bibitem{McLean2010}
William McLean.
\newblock Regularity of solutions to a time-fractional diffusion equation.
\newblock {\em ANZIAM J.}, 52:123--138, 2010.

\bibitem{McLeanEtAl2019}
William McLean, Kassem Mustapha, Raed Ali, and Omar Knio.
\newblock Regularity theory for time-fractional advection-diffusion-reaction
  equations.
\newblock In preparation.

\bibitem{MetzlerBarkaiKlafter1999}
R.~Metzler, E.~Barkai, and J.~Klafter.
\newblock Deriving fractional {F}okker--{P}lanck equations from a generalised
  master equation.
\newblock {\em Europhys. Lett.}, 46:431--436, 1999.

\bibitem{Mustapha2015}
Kassem Mustapha.
\newblock Time-stepping discontinuous {G}alerkin methods for fractional
  diffusion problems.
\newblock {\em Numer. Math.}, 130(3):497--516, 2015.

\bibitem{NohelShea1976}
J.~A. Nohel and D.~F. Shea.
\newblock Frequency domain methods for volterra equations.
\newblock {\em Advances in Mathematics}, 22:278--304, 1976.

\bibitem{SakamotoYamamoto2011}
Kenichi Sakamoto and Masahiro Yamamoto.
\newblock Initial value/boundary value problems for fractional diffusion-wave
  equations and applications to some inverse problems.
\newblock {\em Journal of Mathematical Analysis and Applications},
  382(1):426--447, 2011.

\bibitem{StynesEtAl2017}
Martin Stynes, Eugene O'Riordan, and Jos\'e~Luis Gracia.
\newblock Error analysis of a finite difference method on graded meshes for a
  time-fractional diffusion equation.
\newblock {\em SIAM J. Numer. Anal.}, 55(2):1057--1079, 2017.

\bibitem{Vainikko2007}
Gennadi Vainikko.
\newblock Weakly singular integral equations.
\newblock Lecture Notes, University of Tartu, Helsinki University of
  Technology, 2006--2007.

\bibitem{YusteAcedo2005}
S.~B. Yuste and L.~Acedo.
\newblock An explicit finite difference method and a new von {N}eumann
  stability analysis for fractional diffusion equations.
\newblock {\em SIAM. J. Numer. Anal.}, 42(5):1862--1874, 2005.

\bibitem{Zacher2009}
Rico Zacher.
\newblock Weak solutions of abstract evolutionary integro-differential
  equations in {H}ilbert spaces.
\newblock {\em Funkcialaj Ekvacioj}, 52:1--18, 2009.

\end{thebibliography}
\bigskip\smallskip
{\it
\noindent $^1$ School of Mathematics and Statistics\\
The University of New South Wales\\ Sydney 2052, Australia\\[4pt]
e-mail: w.mclean@unsw.edu.eu\\[12pt]
\noindent $^2$ Department of Mathematics and Statistics\\
 KFUPM, Dhahran 31261, Saudi Arabia\\[4pt]
e-mail: kassem@kfupm.edu.su, g201305090@kfupm.edu.sa \\[12pt]
\noindent $^3$ Computer, Electrical, Mathematical Sciences and Engineering Division\\
KAUST, Thuwal 23955, Saudi Arabia\\[4pt]
e-mail: Omar.Knio@kaust.edu.sa}
\end{document}